\numberwithin{equation}{section}
\title{The core of the Levi distribution}
\author{Gian Maria Dall'Ara}
\address{Istituto Nazionale di Alta Matematica ``F. Severi"\\ Research Unit Scuola Normale Superiore\\
Piazza dei Cavalieri, 7, 56126, Pisa (Italy)}
\email{dallara@altamatematica.it}
\author{Samuele Mongodi}
\address{Politecnico di Milano\\ Piazza Leonardo da Vinci, 32, I-20133, Milano (Italy)
}
\email{samuele.mongodi@polimi.it}
\thanks{}
\date{\today}
\newcommand{\C}{\mathbb{C}}
\newcommand{\R}{\mathbb{R}}
\newcommand{\Z}{\mathbb{Z}}
\newcommand{\N}{\mathbb{N}}
\newcommand{\dbar}{\overline{\partial}}
\newcommand{\DF}{\mathrm{DF}}
\newcommand{\norm}{\mathfrak{n}}
\newcommand{\levinull}{\mathcal{N}}
\newcommand{\distr}{\mathcal{D}}
\newcommand{\core}{\mathfrak{C}}
\newcommand{\sphere}{\mathbb{S}}
\newcommand{\PH}{\mathrm{PH}}
\newcommand{\OO}{\mathcal{O}}
\newtheorem{thm}{Theorem}[section]
\newtheorem{prp}[thm]{Proposition}
\newtheorem{lem}[thm]{Lemma}
\newtheorem{cor}[thm]{Corollary}
\newtheorem{prp_dfn}[thm]{Proposition-Definition}
\newtheorem{dfn}[thm]{Definition}
\newtheorem{ex}[thm]{Example}
\newtheorem*{rmk}{Remark}
\newtheorem*{rmk2}{Historical remark}
\renewcommand\subsection{\@startsection{subsection}{2}%
  \z@{-.5\linespacing\@plus-.7\linespacing}{.5\linespacing}%
  {\normalfont\scshape}}
\begin{document}
	
\maketitle

\begin{abstract}
We introduce a new geometrical invariant of CR manifolds of hypersurface type, which we dub the "Levi core" of the manifold. When the manifold is the boundary of a smooth bounded pseudoconvex domain, we show how the Levi core is related to two other important global invariants in several complex variables: the Diederich--Fornæss index and the D'Angelo class (namely the set of D'Angelo forms of the boundary). We also show that the Levi core is trivial whenever the domain is of finite-type in the sense of D'Angelo, or the set of weakly pseudoconvex points is contained in a totally real submanifold, while it is nontrivial if the boundary contains a local maximum set. 

As corollaries to the theory developed here, we prove that for any smooth bounded pseudoconvex domain with trivial Levi core the Diederich--Fornæss index is one and the $\dbar$-Neumann problem is exactly regular (via a result of Kohn and its generalization by Harrington).

Our work builds on and expands recent results of Liu and Adachi--Yum. 
\end{abstract}

\tableofcontents

\section{Introduction}

Let $\Omega\subset\C^n$ be a smooth pseudoconvex domain, and let $M$ be its boundary. The purpose of this paper is to define a distribution $\mathfrak{C}$ of complex subspaces: \[p\in M \quad\longmapsto\quad\mathfrak{C}_p\subset\C \otimes T_pM,\] which we call the "Levi core" of $M$, and to relate it to two important global invariants in analysis in several complex variables: the Diederich--Fornaess index of $\Omega$ and the D'Angelo class of $M$, i.e., the set of D'Angelo forms of $M$ (see below for precise definitions).

\medskip

The Levi core $\mathfrak{C}$ may be obtained as a special case of a general differential-geometric construction, which we describe first. If $\mathcal{D}=\{\mathcal{D}_p\}_{p\in M}$ is a distribution of subspaces on a real smooth manifold $M$, that is, $\mathcal{D}_p$ is a linear subspace of the tangent space $T_pM$ for every $p\in M$, then we define the \textbf{derived distribution} $\mathcal{D}'=\{\mathcal{D}_p'\}_{p\in M}$ as \[
\mathcal{D}_p':=\mathcal{D}_p\cap T_pS_{\mathcal{D}},
\] where: \begin{enumerate}
    \item $S_{\mathcal{D}}$ is the support of $\mathcal{D}$, namely the set of points $p\in M$ such that the fiber $\mathcal{D}_p$ has positive dimension;
    \item $T_pS_{\mathcal{D}}\subset T_pM$ is the "$C^\infty$ Zariski" tangent space to the subset $S_{\mathcal{D}}\subset M$ at the point $p$ (since we do not make any regularity assumption on $\mathcal{D}$, its support $S_{\mathcal{D}}$ need not be a submanifold and we need a notion of tangent space valid for general subsets, see Section \ref{distribution_sec} for details).
\end{enumerate}
In other words, the vectors of the distribution $\mathcal{D}$ that survive in the derived distribution $\mathcal{D}'$ are those that are also tangent to the support of $\mathcal{D}$. In this way, we obtain a smaller distribution $\mathcal{D}'\subseteq \mathcal{D}$ (i.e., $\mathcal{D}'_p\subseteq \mathcal{D}_p$ for every $p$). The operation can be iterated, yielding a decreasing sequence of distributions \[
\mathcal{D}\supseteq \mathcal{D}^{(1)}=\mathcal{D}'\supseteq \mathcal{D}^{(2)}=(\mathcal{D}')'\supseteq\ldots\]
This sequence does not necessarily stabilize after finitely many steps, but it must do so "eventually". More precisely, if one defines $\mathcal{D}^{(\alpha)}$ for any ordinal $\alpha$ by transfinite recursion, it turns out that there exists a countable ordinal $\alpha_1$ such that $\mathcal{D}^{(\alpha_1)}$ equals its derived distribution (Theorem \ref{Cantor_Bendixson}).
In fact, this result (and the whole construction of "iterated derived distributions") is a generalization of the very classical Cantor--Bendixson theorem in set theory (see Section \ref{core_sec}). This "stable" distribution $\mathcal{D}^{(\alpha_1)}$ will be called the \textbf{core} of the distribution $\mathcal{D}$, and denoted by $\mathfrak{C}(\mathcal{D})$.  Notice that the construction has an obvious generalization to the case of complex distributions of subspaces $\mathcal{D}=\{\mathcal{D}_p\}_{p\in M}$, that is, when $\mathcal{D}_p$ is a complex linear subspace of the complexified tangent space $\C\otimes T_pM$ for every $p$ (and $M$ is still a real manifold). It is this complex analogue that we need to define the Levi core.

\medskip

\par Let then $M$ be the boundary of a smooth pseudoconvex domain $\Omega\subset\C^n$ (or, more generally, let $M$ be a pseudoconvex CR manifold of hypersurface type). Denote by $T^{1,0}M\subseteq \C\otimes TM$, as usual, the CR bundle of $M$. Then possibly the most basic invariant of $M$ is the complex distribution $\levinull\subseteq T^{1,0}M$ consisting of null vectors for any Levi form $\lambda$ of $M$: $Z_p\in \levinull_p$ if and only if $Z_p\in T^{1,0}_pM$ and $\lambda(Z_p,Z_p)=0$. We call $\levinull$ the Levi distribution of $M$. As is well-known, its definition is independent of the choice of Levi form $\lambda$. We can now give our main definition.

\begin{dfn}\label{levicore_dfn_intro}
The \textbf{Levi core} of $M$ is the core $\mathfrak{C}(\levinull)$ of the Levi distribution $\levinull$.
\end{dfn}

With a harmless abuse of language, we will also refer to $\mathfrak{C}(\levinull)$ as the Levi core of the domain $\Omega$. A couple of comments may help to clarify this definition.
\begin{enumerate}
    \item The pseudoconvex CR manifold $M$ (equivalently, the domain $\Omega$ of which $M$ is the boundary) is strongly pseudoconvex if and only if its Levi null distribution is trivial, i.e., $\levinull_p=0$ for every $p\in M$. Since the Levi core $\mathfrak{C}(\levinull)$ is typically much smaller than the Levi null distribution, one can view the category of smooth pseudoconvex domains with trivial Levi core as a wide class of "nondegenerate" pseudoconvex domains. In fact, in Section \ref{Levi_core_sec} we prove that weakly regular domains in the sense of Catlin \cite{Catlin_global} have trivial Levi core, and hence the same is true for pseudoconvex domains of finite-type in the sense of D'Angelo and pseudoconvex domains whose set of weakly pseudoconvex boundary points is a totally real submanifold.
    \item At the opposite end of the spectrum of smooth pseudoconvex domains, we have the most degenerate of all, namely those containing (positive dimensional) complex submanifolds $N$ in their boundary $M$. It is easy to see (Proposition \ref{horizontal_lem}) that in this case $T^{1,0}N\subseteq \mathfrak{C}(\levinull)$, and hence the Levi core of $M$ is nontrivial. In Section \ref{CRexamples_sec} we compute the Levi core for a family of three-dimensional pseudoconvex CR manifolds including the boundaries of generalized worm domains, showing how the Levi core captures the complex structure in the boundary, while forgetting additional "benign" weakly pseudoconvex points.
    \item In Section \ref{local_max_sec}, we show that any local maximum set $K\subseteq M$ in the sense of Slodkowski \cite{slodkowski_locmaxprop} is contained in the support of the Levi core, thus relating it to the \emph{weak Jensen boundary} of $\Omega$ (see  \cite{Ohsawa_Sibony_boundedpsh} for the definition and related results).
    \item Finally, let us conclude by mentioning the issue of Levi-flatness, which is a global version of the "most degenerate" case discussed in point (2). In presence of a Levi-flat open set in $M$, our construction does not yield any additional information, as the Levi core is supported on the whole open set (again by Proposition \ref{horizontal_lem}).
\end{enumerate}

\medskip

The Levi core interacts nicely with the two global invariants cited at the beginning of this introduction. We now turn our attention to them.

\medskip

The first is the \textbf{Diederich--Fornaess index} $\DF(\Omega)$, defined as the supremum of all exponents $\delta\in (0,1]$ with the property that $-(-r)^\delta$ is plurisubharmonic for at least a defining function $r$ of $\Omega$ (a notion originating from \cite{Diederich_Fornaess_index}). Among other reasons of interest, the Diederich--Fornaess index allows to formulate a sufficient condition for exact regularity of the $\dbar$-Neumann problem on a smooth bounded pseudoconvex domain $\Omega$ (see Straube's book \cite{Straube_book} for background on the $\dbar$-Neumann problem). Slightly more precisely, by a theorem of Kohn \cite{Kohn_quantitative}, under the assumption that $\DF(\Omega)=1$ and a quantitative control on a sequence of defining functions $r_k$ achieving the supremum in the definition of $\DF(\Omega)$, the $\dbar$-Neumann problem on $\Omega$ is exactly regular (see Section \ref{df_sec} for a precise statement and a generalization due to Harrington). Thus, it is of great interest to be able to compute or estimate the Diederich--Fornaess index of a domain, and in particular to decide whether $\DF(\Omega)=1$. In the last couple of years, some progress has been made on this quite difficult problem by Liu, Yum, and Adachi. In order to review these developments, we first need to say a few words about the second global invariant mentioned above, which we call (with a nonstandard terminology) the D'Angelo class.

\medskip

The \textbf{D'Angelo class} is a collection $\mathcal{A}_M$ of real smooth one-forms, called D'Angelo forms, that can be naturally attached to any pseudoconvex CR manifold of hypersurface type $M$, and that behaves as a sort of cohomology class "when restricted to the Levi distribution $\levinull$", in the sense that:\begin{itemize}
    \item[i)] if $\alpha \in \mathcal{A}_M$, then for any other $\alpha'\in \mathcal{A}_M$ there exists $f\in C^\infty(M,\R)$ such that $\alpha'_{|\levinull}=\alpha_{|\levinull}+df_{|\levinull}$.
    \item[ii)] if $\alpha \in \mathcal{A}_M$, then the two-form $d\alpha$ vanishes when restricted to $\levinull$.
\end{itemize}
The notion of D'Angelo class implicitly originated in papers of D'Angelo \cite{DAngelo_finite_type_real_hypersurfaces, DAngelo_iterated}, but it was in work of Boas and Straube \cite{Boas_Straube_derham} that its importance for the $\dbar$-Neumann problem was recognized. Boas and Straube proved that if $\Omega$ is a smooth bounded pseudoconvex domain and the points of infinite type of $M=b\Omega$ are all contained in a submanifold $N\subset M$ whose real tangent bundle is contained in $\Re(\levinull)$ (the real part of the Levi distribution), then a sufficient condition for exact regularity of the $\dbar$-Neumann problem on $\Omega$ is that the restriction of the D'Angelo class $\mathcal{A}_M$ to $N$ is trivial as an element of the first de Rham cohomology group $H^1_{\mathrm{dR}}(N,\R)$. This restriction is in fact a genuine de Rham cohomology class on $N$, thanks to properties i) and ii) above. A few other papers where the D'Angelo class plays a more or less explicit r\^{o}le appeared later, e.g., \cite{Straube_Sucheston_Exactness, Straube_Sucheston_Levi_foliations, Straube_sufficient, Forstneric_LaurentT_stein_compacts, Mongodi_Tomassini_semihol, Mongodi_Tomassini_onecomplete}.

\medskip

We can now discuss the recent work of Liu, Yum, and Adachi, establishing an interesting connection between the Diederich--Fornaess index of a smooth bounded pseudoconvex domain $\Omega$ and the D'Angelo class of its boundary $M$. Liu \cite{Liu_index_I} expressed $\DF(\Omega)$ as the optimal constant in a rather complicated differential inequality on $M$, and he was also able to determine the explicit value of this constant when $\Omega$ is a Diederich--Fornaess worm domain, thus succeeding in the exact computation of the $\DF$ index of a domain in $\C^n$ for which this quantity is strictly less than one, a result with remarkably no precedent in the literature (see \cite{fu_shaw_df, adachi_brinkschulte_global} for related results in the setting of complex manifolds). Next, Yum \cite{Yum_invariance} recognized that Liu's differential inequality could be neatly reformulated in terms of D'Angelo forms of $M$, and exploited this fact to prove the CR invariance of the $\DF$ index and of a "dual" Steinness index, introduced in a previous paper \cite{Yum_Steinness}. Finally, Adachi and Yum \cite{Adachi_Yum} generalized these results, replacing the ambient space $\C^n$ with an arbitrary complex manifold, and the extrinsic computations of \cite{Liu_index_I} and \cite{Yum_invariance}, which in particular relied on the flatness of the Euclidean metric, with intrinsic ones.

\medskip

Let us discuss Adachi--Yum result in more detail. If $\alpha\in \mathcal{A}_M$ is a D'Angelo form (for simplicity, in this introductory discussion we still assume that $M=b\Omega$, where $\Omega$ is a smooth bounded pseudoconvex domain in $\C^n$), we define the norm-like quantity \begin{equation}\label{norm_intro}
\norm(\alpha):=\inf\{t>0\colon \ \alpha_{1,0}\wedge\alpha_{0,1}<t\dbar\alpha_{1,0}\quad \text{on }\levinull\}.
\end{equation}
Here one may compute the $(1,0)$- and $(0,1)$-components of $\alpha$, and $\dbar\alpha_{1,0}$, by first extending $\alpha$ to an open neighborhood of $M$. It turns out that both $\alpha_{1,0}\wedge\alpha_{0,1}$ and $\dbar\alpha_{1,0}$ restrict to well-defined Hermitian forms on the Levi null distribution $\levinull$ (independently of the extension used to define them), and therefore the inequality in the sense of quadratic forms \[\alpha_{1,0}\wedge\alpha_{0,1}<t\dbar\alpha_{1,0}\] appearing in \eqref{norm_intro} makes sense. Next, we define \[
\norm:=\inf_{\alpha\in \mathcal{A}_M}\norm(\alpha),
\]
which should be thought of as a quantitative measure of the size of the D'Angelo class of $M$. Adachi--Yum main theorem (in the case of domains in $\C^n$) may be rephrased as follows.

\begin{thm}[Theorem 1.1. of \cite{Yum_invariance}; Theorem 2 of \cite{Adachi_Yum}]\label{adachi_yum_thm_intro} \begin{equation}\label{adachi_yum_formula}
\DF(\Omega)=\frac{1}{1+\norm}.
\end{equation}
\end{thm}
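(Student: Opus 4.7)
The plan is to establish the two inequalities $\DF(\Omega)\leq 1/(1+\norm)$ and $\DF(\Omega)\geq 1/(1+\norm)$ separately, linking the defining-function calculation with the intrinsic quantity $\norm(\alpha)$ via a careful analysis of the Hessian of $-(-r)^\delta$ along the Levi null distribution $\levinull$.

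\medskip

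\textbf{Upper bound.} Suppose that, for a defining function $r$ of $\Omega$, the function $\rho:=-(-r)^\delta$ is plurisubharmonic on $\Omega$. A direct computation gives
\begin{equation*}
i\partial\dbar\rho \;=\; \delta(-r)^{\delta-1}\!\left(i\partial\dbar r + \frac{1-\delta}{-r}\,i\partial r\wedge \dbar r\right),
\end{equation*}
so plurisubharmonicity of $\rho$ is equivalent to the inequality $i\partial\dbar r + (1-\delta)(-r)^{-1} i\partial r\wedge\dbar r\geq 0$ on $\Omega$. Given a Levi null section $L\in \levinull$ at a point $p\in M$, extend it to a $(1,0)$ vector field $\widetilde L$ on a neighborhood in $\bar\Omega$. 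Both summands above vanish at $p$ when evaluated on $(\widetilde L,\overline{\widetilde L})$ (the first because the Levi form vanishes on $\levinull$, the second because $\partial r$ annihilates $T^{1,0}M$), so the real content of the inequality lies in the first-order Taylor expansion in $r$. The plan is to expand both terms to first order along a transverse direction and show that the resulting boundary inequality takes the form
\begin{equation*}
\alpha_{1,0}\wedge\alpha_{0,1}(L,\bar L) \;\leq\; \frac{1-\delta}{\delta}\,\dbar\alpha_{1,0}(L,\bar L) \qquad \text{on } \levinull,
\end{equation*}
where $\alpha$ is the D'Angelo form naturally attached to $r$. This rewrites as $\norm(\alpha)\leq(1-\delta)/\delta$, i.e.\ $\delta\leq 1/(1+\norm(\alpha))$, and taking the infimum over defining functions (equivalently over $\alpha\in \mathcal{A}_M$) yields $\DF(\Omega)\leq 1/(1+\norm)$.

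\medskip

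\textbf{Lower bound.} Given $\alpha\in\mathcal{A}_M$ with $\norm(\alpha)<t$, let $\delta:=1/(1+t)$. Fix any defining function $r_0$ whose associated D'Angelo form is $\alpha$ (the correspondence between defining functions and D'Angelo forms, which by property (i) of the introduction is a torsor under $dC^\infty(M,\R)$, is available from the general theory). The Kohn--Diederich--Fornaess trick is to perturb $r_0$ by an exponential factor: set $r:=e^{-\psi}r_0$ for a function $\psi\in C^\infty(\bar\Omega)$ to be chosen. The Hessian of $-(-r)^\delta$ picks up extra terms involving $\partial\psi$, $i\partial\dbar\psi$, and cross terms with $\partial r_0$. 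Expanding near $M$ and restricting to $\levinull$, the strict inequality $\norm(\alpha)<t$ ensures that $\psi$ can be chosen so that the ``bad" cross-term is dominated, making $-(-r)^\delta$ plurisubharmonic on a neighborhood of $M$ inside $\Omega$. On compact subsets of $\Omega$ one gains positivity by taking $\delta$ slightly smaller and modifying $\psi$ by a strictly plurisubharmonic bump, as in the original Diederich--Fornaess construction. Letting $t\downarrow\norm$ gives $\DF(\Omega)\geq 1/(1+\norm)$.

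\medskip

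\textbf{Main obstacle.} The substantive difficulty is the ``leading-order" calculation at the boundary in both directions: one has to extract, from the Taylor expansion of $i\partial\dbar r(\widetilde L,\overline{\widetilde L})$ and $(-r)^{-1}i\partial r\wedge\dbar r(\widetilde L,\overline{\widetilde L})$ as $p\to M$, exactly the terms $\dbar\alpha_{1,0}$ and $\alpha_{1,0}\wedge\alpha_{0,1}$ of the D'Angelo form, and to verify that these restrictions to $\levinull$ are well defined independently of choices (extension of $L$, choice of transverse vector field). This is where the CR-invariant nature of the D'Angelo class and the intrinsic viewpoint of Adachi--Yum become essential: without the intrinsic reformulation, the calculation is very sensitive to the Euclidean ambient metric; with it, one can work with a well-chosen ``geodesic" transverse field that kills spurious terms and exposes the canonical identification $\norm(\alpha)=(1-\delta)/\delta$.
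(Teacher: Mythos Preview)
Your upper-bound outline is on the right track and matches the paper's strategy: one computes the complex Hessian of $-(-r)^\delta$, observes that on $\levinull$ both summands vanish to zeroth order, and extracts the inequality $\alpha_{1,0}\wedge\alpha_{0,1}\leq \frac{1-\delta}{\delta}\dbar\alpha_{1,0}$ from the leading-order expansion. The paper carries this out not by Taylor-expanding the Hessian on a tangential extension $\widetilde L$ alone, but by restricting the Hessian to the two-plane spanned by $Z$ and the transverse $(1,0)$-field $N$ (with $Nr\equiv1$), taking the determinant of the resulting $2\times2$ matrix, and differentiating that scalar along $N+\overline N$ at the boundary. The term $|\alpha(Z)|^2=4|\partial\dbar r(Z,\overline N)|^2$ comes from the off-diagonal entry, while the identity $N(\partial\dbar r(Z,\overline Z))=\partial\alpha(Z,\overline Z)-\tfrac12|\alpha(Z)|^2$ (Lemma~\ref{liu_yum_key_lem}) accounts for the diagonal. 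If you try to read off the inequality purely from $\widetilde L$ tangential, you lose the $|\alpha|^2$ contribution; the two-plane computation is what makes the ``main obstacle'' you identified tractable.

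Your lower-bound plan has a genuine gap. Once you have chosen a D'Angelo form $\alpha$ with $\norm(\alpha)<t$, Proposition~\ref{d'angelo_prp}\,vi) already hands you a defining function $r$ whose D'Angelo form \emph{is} $\alpha$; there is nothing further to optimize, and an exponential perturbation $r=e^{-\psi}r_0$ changes the D'Angelo form on $\levinull$ to $\alpha-d\psi$, about which you have no control. In the paper there is no perturbation at all: one shows directly that $-(-r)^{1/(1+t)}$ is \emph{strictly} plurisubharmonic on $\Omega\cap V$ for $V$ a neighborhood of $M$. The mechanism is again the $2\times2$ Hessian on $\mathrm{span}(Z,N)$: the bottom-right entry blows up like $(1-\delta)/(-2r)$, so by Sylvester it suffices to make the determinant positive. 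That determinant is a quadratic form in $Z$ which is nonnegative on $M$ (pseudoconvexity) and whose normal derivative on $\levinull$ is, by the same key identity and the assumed strict inequality, strictly positive. A short real-variable lemma (Lemma~\ref{quad_form_lem}) then forces positivity just inside $\Omega$. This replaces the Diederich--Forn{\ae}ss ``bump'' argument entirely and is what makes the constant $\delta=1/(1+t)$ sharp rather than merely qualitative.
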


Since $\norm\in [0,+\infty)$ ($\norm=+\infty$ can be shown to be impossible if the ambient space is $\C^n$), one recovers the classical fact that $\DF(\Omega)>0$ (originally proved by Diederich and Fornaess \cite{Diederich_Fornaess_index}), and one can also see that $\DF(\Omega)=1$ if and only if $\norm=0$, a triviality condition for the D'Angelo class.

\medskip

Formula \eqref{adachi_yum_formula} elegantly relates the Diederich--Fornaess index and the D'Angelo class. We now bring the Levi core into the picture, showing how it further clarifies it. We start by generalizing the norm-like object \eqref{norm_intro}, embedding it into a family of similar gadgets: \begin{equation}\label{norm_D}
\norm(\alpha;\mathcal{D}):=\inf\{t>0\colon \ \alpha_{1,0}\wedge\alpha_{0,1}<t\dbar\alpha_{1,0}\quad \text{on }\mathcal{D}\},
\end{equation}
where the Levi null distribution $\levinull$ has been replaced by an arbitrary sub-distribution $\mathcal{D}\subseteq \levinull$. It is clear that the definition makes sense, and that $\norm(\alpha;\mathcal{D})\leq \norm(\alpha;\levinull)=\norm(\alpha)$ for every D'Angelo form $\alpha$. Next, we set \[
\norm(\mathcal{D}):=\inf_{\alpha\in \mathcal{A}_M}\norm(\alpha;\mathcal{D}).
\]
Our main result is the following "reduction to the core" theorem.

\begin{thm}\label{reduction_thm_intro} We have the identity:
\begin{equation}\label{reduction_intro}
\norm(\levinull)=\norm(\mathfrak{C}(\levinull)).
\end{equation}
\end{thm}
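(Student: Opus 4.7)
The inequality $\norm(\mathfrak{C}(\levinull)) \le \norm(\levinull)$ is immediate from the monotonicity $\mathcal{D}_1 \subseteq \mathcal{D}_2 \Rightarrow \norm(\alpha;\mathcal{D}_1) \le \norm(\alpha;\mathcal{D}_2)$ built into the definition \eqref{norm_D}, applied with $\mathfrak{C}(\levinull)\subseteq\levinull$ and followed by passage to the infimum over $\alpha\in\mathcal{A}_M$. The content of the theorem is thus the reverse inequality $\norm(\levinull) \le \norm(\mathfrak{C}(\levinull))$, which I plan to obtain by transfinite induction along the derived series $\levinull\supseteq \levinull^{(1)}\supseteq \cdots \supseteq \levinull^{(\alpha_1)}=\mathfrak{C}(\levinull)$ supplied by Theorem \ref{Cantor_Bendixson}. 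The engine of the induction is the following \emph{key lemma}: for every complex sub-distribution $\mathcal{D}\subseteq \levinull$, $\norm(\mathcal{D}) \le \norm(\mathcal{D}')$.

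The heart of the key lemma is a pointwise modification within the D'Angelo class. Fix $\alpha\in\mathcal{A}_M$ with $\norm(\alpha;\mathcal{D}')<t$, a point $p\in S_\mathcal{D}$, and a direction $Z\in \mathcal{D}_p\setminus \mathcal{D}'_p$. By the very definition of the $C^\infty$-Zariski tangent space $T_pS_\mathcal{D}$ there exists a real $f\in C^\infty(M)$ with $f_{|S_\mathcal{D}}\equiv 0$ and $Z(f)\ne 0$. Setting $\psi:=-Cf^2$ for a large constant $C$, one checks that at every $q\in S_\mathcal{D}$ we have $d\psi(q)=-2Cf(q)df(q)=0$ (so the replacement $\alpha\rightsquigarrow \alpha+d\psi$ does not alter the LHS $|\alpha(\cdot)|^2$ of the D'Angelo inequality on $S_\mathcal{D}$), while $\dbar\partial\psi(q)(W,\overline W)=2C|W(f)|^2\ge 0$ for every $W\in T^{1,0}_qM$. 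This Hermitian-form boost vanishes identically on $W\in \mathcal{D}'_q\subseteq T_qS_\mathcal{D}$ (so the original inequality on $\mathcal{D}'$ survives verbatim) while being strictly positive in the bad direction $Z$, and $C$ large makes $2tC|Z(f)|^2$ dominate $|\alpha(Z)|^2$. A locally finite cover of $S_\mathcal{D}$, combined with the finite-dimensionality of each quotient $\mathcal{D}_p/\mathcal{D}'_p$ and a partition of unity, lets one glue finitely many such $f$'s into a single global $\psi$; since $d(\alpha+d\psi)=d\alpha$, property (ii) of the D'Angelo class is preserved, and $\beta:=\alpha+d\psi\in\mathcal{A}_M$ realises $\norm(\beta;\mathcal{D}) \le t$.

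Granting the key lemma, the successor steps of the induction go through cleanly; the main difficulty lies in the limit-ordinal step. At a countable limit $\lambda$, the fibers $\mathcal{D}^{(\alpha)}_p$ do stabilise at each individual $p$ after finitely many derivations (each being a subspace of the finite-dimensional $\C\otimes T_pM$), but the stabilisation index $N(p)$ is a genuinely non-uniform function of $p$, so one cannot simply apply the successor step "$\lambda$ times". My plan for this step is to bypass the transfinite bookkeeping by a single global construction: for every $\epsilon>0$, produce a $\beta\in\mathcal{A}_M$ with $\norm(\beta;\levinull)\le \norm(\mathfrak{C}(\levinull))+\epsilon$ by adding to an almost-optimal $\alpha$ the exact form $d\psi$ with $\psi=\sum_{\gamma<\alpha_1}\epsilon_\gamma\psi_\gamma$, where $\psi_\gamma$ is the successor-step correction for the $\gamma$-stratum and the weights $\epsilon_\gamma$ decay fast enough to guarantee smoothness of the sum. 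The pointwise finiteness of $N(p)$ ensures that at any $p$ only finitely many $\epsilon_\gamma\psi_\gamma$ contribute effectively; the delicate control of cross-stratum interactions — making sure the correction introduced for a higher-level stratum does not spoil the inequality on a deeper one — is where I expect the main technical work to concentrate.
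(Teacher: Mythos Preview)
Your successor step (the ``key lemma'') is essentially the paper's argument: modify $\alpha$ by $-d(g^2)$ for $g$ vanishing on $S_{\mathcal{D}}$, so that on $S_{\mathcal D}$ the first-order part of the modification vanishes while the Hessian contributes the nonnegative boost $2|Zg|^2$. One technical remark: the paper avoids partitions of unity entirely. It observes that the open conical sets $V(\alpha-d(g^2))$ cover $\mathcal D\setminus 0$, uses compactness of the unit sphere bundle of $\mathcal D$ (Proposition~\ref{compactness_prp}) to extract finitely many $g_1,\dots,g_N$, and then simply \emph{adds} the corrections, setting $\widetilde\gamma=\alpha-d(\sum_j g_j^2)$. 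This works because each summand $|Zg_j|^2$ is nonnegative, so superposition can only help. Your partition-of-unity scheme would also work (the cutoff derivatives are harmless since they get multiplied by $g_j$), but the direct sum is cleaner.

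Where your proposal goes astray is the limit-ordinal step. You identify it as ``the main difficulty'' and propose a transfinite sum $\psi=\sum_{\gamma<\alpha_1}\epsilon_\gamma\psi_\gamma$ with fast-decaying weights, worrying about ``cross-stratum interactions''. This is both unnecessary and, as you describe it, not obviously completable: controlling how the correction for stratum $\gamma$ affects the inequality on deeper strata is exactly the kind of uniformity problem that transfinite recursion does not automatically solve. The paper's treatment of the limit step is in fact the \emph{easy} half. Given $t>\norm(\bigcap_\lambda\mathcal D_\lambda)$, pick a single D'Angelo form $\alpha$ realising the strict inequality on the intersection. The set $V(\alpha)=\{L:\ |\alpha(L)|^2<t\,\dbar\alpha(L,\overline L)\}$ is open and conical and contains $\bigcap_\lambda\mathcal D_\lambda\setminus 0$; hence $\{V(\alpha)\}\cup\{\C TM\setminus\mathcal D_\lambda\}_\lambda$ is an open conical cover of $\C TM\setminus 0$. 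Compactness of $M$ (again Proposition~\ref{compactness_prp}) plus the nestedness of the $\mathcal D_\lambda$ yields a single $\lambda_0$ with $\mathcal D_{\lambda_0}\setminus 0\subseteq V(\alpha)$, i.e.\ $\norm(\alpha;\mathcal D_{\lambda_0})\le t$. No new form needs to be constructed at all. So the missing idea in your plan is precisely this second use of the compactness lemma; once you have it, the limit step is three lines and your transfinite-sum machinery can be discarded.
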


Combining Theorem \ref{adachi_yum_thm_intro} and Theorem \ref{reduction_thm_intro}, one can deduce a number of corollaries:
\begin{enumerate}
    \item[a)] If $\Omega$ has trivial Levi core, then $\norm=\norm(\levinull)=0$ and thus $\DF(\Omega)=1$. In particular, any weakly regular domain has $\DF$ index one. This includes the case of domains whose set of weakly pseudoconvex boundary points is a totally real submanifold, cf. Theorem 4.5 of \cite{liu_index_II}, in turn generalizing a result of Krantz, Liu, and Peloso \cite{krantz_liu_peloso_index}.

    \item[b)] A more precise quantitative refinement of Theorem \ref{reduction_thm_intro} (Theorem \ref{reduction_thm}) allows us to conclude that if the Levi core of $\Omega$ is trivial, then Kohn's theorem \cite{Kohn_quantitative} can be applied to conclude that the $\dbar$-Neumann problem is exactly regular (Theorem \ref{exact_reg_thm}). 
    \item[c)] If $M=b\Omega$ contains a complex submanifold $N$, then $\mathfrak{C}(\levinull)\supseteq T^{1,0}N$, as remarked after Definition \ref{levicore_dfn_intro}, and we have the trivial lower bound $\norm(\mathfrak{C}(\levinull))\geq \norm(T^{1,0}N)$. In Section \ref{de_rham_sec}, we prove that $\norm(T^{1,0}N)>0$ if and only if the restriction of the D'Angelo class to $N$ defines a nonzero element of $H^1_{\mathrm{dR}}(N, \R)$ (Theorem \ref{norm_thm}). Hence, the Diederich--Fornaess index is strictly less than one whenever this happens. Notice that, while this conclusion already follows from Theorem \ref{adachi_yum_thm_intro} and Theorem \ref{norm_thm}, by Theorem \ref{reduction_thm_intro} one has the equality $\norm=\norm(T^{1,0}M)$ whenever $\mathfrak{C}(\levinull)=T^{1,0}N$ (cf. the examples of Section \ref{CRexamples_sec}).
\end{enumerate}

\medskip

The paper is organized as follows: \begin{itemize}
    \item In Section \ref{core_sec} we define and prove the existence of the core of a distribution of subspaces $\mathcal{D}$ on a real smooth manifold, under the sole assumption that $\mathcal{D}$ is closed as a subset of the tangent bundle.
    \item In Section \ref{Levi_core_sec} we introduce the Levi core in the context of abstract pseudoconvex CR manifolds of hypersurface type and prove a number of its properties.
    \item Section \ref{d'angelo_sec} starts with a review of the basic theory of the D'Angelo class, again in the context of abstract pseudoconvex CR manifolds of hypersurface type. It continues with the definition of a norm-like function on the first de Rham cohomology of a complex manifold, and the proof of its nondegeneracy (Theorem \ref{norm_thm}). This definition is then used as a motivation for introducing the quantites $\norm(\mathcal{D})$ (see \eqref{norm_D} above), and certain quantitative refinements $\norm_K(\mathcal{D})$ of these (where $K<+\infty$ is a parameter).
    \item In Section \ref{reduction_sec} our main "reduction to the core" theorem is proved.
    \item Finally, Section \ref{df_sec} discusses a refinement of Adachi--Yum theorem involving the norms $\norm_K(\levinull)$, that allows in particular to deduce, via Kohn's theorem \cite{Kohn_quantitative}, the exact regularity of the $\dbar$-Neumann problem on smooth bounded pseudoconvex domains with trivial Levi core.
    \item In the Appendix, we give a quantitative estimate for the norm-like functions introduced in Section \ref{d'angelo_sec}, in the case where the complex manifold is a pseudoconvex domain in $\C^n$ satisfying a weak regularity property. 
\end{itemize}

\subsection{Acknowledgment} This project was begun while the first-named author was a Marie Sk\l odowska-Curie Research Fellow at the University of Birmingham. He gratefully
acknowledges the support of the European Commission via the Marie Sk\l odowska-Curie Individual Fellowship "Harmonic Analysis on Real Hypersurfaces in Complex Space" (ID 841094).

\section{The core of a distribution of subspaces}\label{core_sec}

Let us begin with a remark on terminology. From now on, we employ the term "distribution", in place of the cumbersome "distribution of subspaces", for the objects of Definition \ref{distribution} below. This terminology conflicts with that of the theory of distributions, or "generalized functions" (and the expression "support of a distribution" makes the conflict even worse), but since generalized functions will play no r\^{o}le in what follows no confusion should ensue. We will also write $\C V$ for the complexification $\C\otimes V$ of a real vector space $V$.

\subsection{Distributions and closed distributions}\label{distribution_sec}

\begin{dfn}[Distribution]\label{distribution}
	A \textbf{real distribution} on a real smooth manifold $M$ is a subset $\distr$ of the tangent bundle $TM$ such that the fiber $\distr_p:=\distr\cap T_pM$ is a vector subspace of $T_pM$ for every $p\in M$. A \textbf{complex distribution} on $M$ is a subset $\distr$ of the complexified tangent bundle $\C TM$ such that $\distr_p:=\distr\cap \C T_pM$ is a complex vector subspace of $\C T_pM$ for every $p\in M$.
	
	If $\distr$ is a real distribution, then the complexification of $\distr$ is the complex distribution $\C \distr$ defined by \begin{equation*}
	\C \distr_p:=\{X_p+iY_p\colon X_p, Y_p\in \distr_p\}.
	\end{equation*}
	
	If $\distr$ is a complex distribution, then its real part is the real distribution $\Re(\distr)$ defined by \begin{equation*}
	\Re(\distr)_p:=\{Z_p+\overline{Z}_p\colon\  Z_p\in \distr_p\}.
	\end{equation*}
	
	The \textbf{support} $S_{\distr}$ of a (real or complex) distribution $\distr$ is the set of points $p\in M$ such that $\distr_p\neq \{0\}$.
\end{dfn}

As a first elementary example, we observe that distributions on a one-dimensional manifold are nothing but subsets.

\begin{ex}[Distributions on a one-dim. manifold]\label{one_dim_distribution}
If $M$ is a one-dimensional manifold and $\mathcal{D}$ is a real (resp. complex) distribution, for each $p\in M$ we have either $\distr_p=\{0\}$ or $\distr_p=T_pM$ (resp. $\distr_p=\C T_pM$). Therefore, we can recover $\distr$ from its support $S_{\distr}\subseteq M$. Viceversa, it is clear that for every subset $S\subseteq M$ there exists a unique real (resp. complex) distribution $\distr$ such that $S_\distr=S$.
\end{ex}

We will be mostly interested in \textbf{closed} distributions, that is, distributions $\mathcal{D}$ that are closed subsets of $TM$, or $\C TM$.

The reader may verify that the complexification of a closed real distribution is closed. Another very basic property of closed distributions is contained in the next proposition, which implies in particular that the supports of closed distributions are closed.

\begin{prp}\label{upper_semicontinuous_prp}
	If $\distr$ is a closed real (resp. complex) distribution, then the dimension $\dim_\R\distr_p$ (resp. $\dim_\C\distr_p$) is an upper semicontinuous function of the point $p$.
\end{prp}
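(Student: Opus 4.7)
The plan is to reformulate upper semicontinuity in terms of the closedness of the superlevel sets and to prove, for each integer $k$, that the set $A_k := \{p \in M : \dim \distr_p \geq k\}$ is closed in $M$. Equivalently, I will show that if $p_n \to p_0$ in $M$ and $\dim \distr_{p_n} \geq k$ for all $n$, then $\dim \distr_{p_0} \geq k$.

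First I would reduce to a local problem by choosing a coordinate chart around $p_0$ which trivializes $TM$ (resp. $\C TM$), and endowing it with an auxiliary Riemannian (resp. Hermitian) inner product, so that each fiber $T_pM$ (resp. $\C T_pM$) is identified with $\R^d$ (resp. $\C^d$) equipped with a fixed inner product in a way that varies smoothly with $p$. For each $n$, since $\dim \distr_{p_n} \geq k$, pick an orthonormal $k$-tuple $v_1^{(n)}, \dots, v_k^{(n)} \in \distr_{p_n}$. Each $v_i^{(n)}$ then lies in the unit sphere of the trivialized fiber, and as $p_n \to p_0$ these unit spheres converge to the unit sphere in $T_{p_0}M$ (resp. $\C T_{p_0}M$).

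By compactness, after passing to a subsequence I may assume that each sequence $v_i^{(n)}$ converges to some $v_i \in T_{p_0}M$ (resp. $\C T_{p_0}M$). Since the inner product varies continuously with $p$, the orthonormality relations pass to the limit, so $v_1, \dots, v_k$ are orthonormal and, in particular, linearly independent. Since each pair $(p_n, v_i^{(n)})$ belongs to the distribution $\distr$ viewed as a subset of $TM$ (resp. $\C TM$), and since $\distr$ is closed, the limit $(p_0, v_i)$ belongs to $\distr$, i.e., $v_i \in \distr_{p_0}$. Therefore $\dim \distr_{p_0} \geq k$, as required.

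The argument is essentially a linear-algebra plus compactness argument, and the only subtlety is the simultaneous compactness of the family of unit spheres as $p$ varies; this is taken care of by working in a local trivialization with a smooth inner product, ensuring that limits of unit vectors remain unit vectors. No step presents a genuine obstacle.
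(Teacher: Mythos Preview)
Your proof is correct and follows essentially the same approach as the paper's: pick an orthonormal $k$-tuple in each $\distr_{p_n}$ using an auxiliary metric, pass to a convergent subsequence by compactness, and use the closedness of $\distr$ together with continuity of the inner product to conclude that the limits form an orthonormal $k$-tuple in $\distr_{p_0}$. The only cosmetic difference is that the paper phrases this as a contradiction (assuming $\dim\distr_p=k$ while $\dim\distr_{p_n}\geq k+1$), whereas you argue directly via closedness of the superlevel sets.
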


\begin{proof} We write the proof in the real case, the complex case being a trivial variant. We argue by contradiction, assuming that $\dim_\R \distr_p=k$, while $\dim_\R \distr_{p_n}\geq k+1$ for a sequence of points $p_n$ converging to $p$. Fixing an arbitrary Riemannian metric $g$ on $M$, we may find orthonormal sets of vectors $\{v_1^{(n)}, \ldots, v_{k+1}^{(n)}\}\subseteq \mathcal{D}_{p_n}$. By a simple compactness argument, we may assume without loss of generality that $v_j^{(n)}$ converges to $v_j\in T_pM$ for every $j=1,\ldots, k+1$. By the closure assumption, $v_1,\ldots, v_{k+1}$ is an orthonormal subset of $\mathcal{D}_p$, contradicting the fact that $\dim_\R \distr_p=k$.
\end{proof}

\begin{rmk}
Another interesting class of distributions is that of \textbf{smooth distributions} (see, e.g., \cite{Lavau}), i.e., distributions such that for every $p\in M$ and every $v\in \distr_p$ there exists a local smooth vector field $X$ on a neighborhood $U$ of $p$ such that $X_p=v$ and $X_q\in \distr_q$ for every $q\in U$. One may easily prove that, for smooth distributions, $\dim\mathcal{D}_p$ is a lower semicontinuous function of $p\in M$. It follows that a distribution is simultaneously smooth and closed if and only if it is a subbundle of the tangent bundle.

\end{rmk}

It is easy to see that a one-dimensional distribution $\distr$ as in Example \ref{one_dim_distribution} is closed if and only if its support $S_{\distr}$ is a closed subset of $M$.

Less obvious examples of closed distributions arise as null-distributions of continuous quadratic forms.

\begin{ex}[Null-distribution of a continuous quadratic form]\label{ex_kernel} Let $E$ be a subbundle of $TM$ and let
\[B_p:E_p\times E_p\to\R\]

be a bilinear symmetric form depending continuously on $p$. Its null-space defines a closed distribution on $M$:
$$\ker B=\{(p, X_p)\in E\ :\ B_p(X_p, Y_p)=0\quad \forall Y_p\in\ E_p\}\;.$$
To see that this distribution is closed, observe that $$\ker B=\bigcap_Y\{(p, X_p)\in E:\ B_p(X_p, Y_p)=0\},\;$$ where $Y$ ranges over global continuous sections of $E$.

Analogously, one may define the null-distribution $\ker H$ of a Hermitian form $H_p: E_p\times \overline{E_p}\to\R$ depending continuously on $p$, where now $E_p$ is a complex subbundle of $\C T_pM$. The resulting complex distribution $\ker H$ is also closed.
\end{ex}

Another general construction yielding closed distributions is provided by the following notion of Zariski tangent space in the smooth category.

\begin{dfn}[\textbf{Tangent distribution} to a subset]\label{tangent_distribution}
	Let $A$ be an arbitrary subset of a real smooth manifold $M$. The (real) tangent distribution $TA$ to $A$, whose fibers we denote by $T_pA$, is defined as follows: \begin{equation*}
	X_p\in T_pA\ \Longleftrightarrow\ X_pf=0\quad\forall f\in C^\infty(M) \colon\ f_{|A}\equiv 0.
	\end{equation*}
	The complex tangent distribution to $A$ is the complexified distribution $\C TA$.
\end{dfn}

Observe that $TA=T\overline{A}$ for any set $A$, and that tangent distributions are local: if $U\subseteq M$ is open, then $TA$ and the tangent distribution to $A\cap U$ as a subset of $U$ have the same fiber at every $p\in U$.

\begin{rmk}If $A$ is a closed subset of $M$ and $\mathcal{C}^\infty_A$ is the sheaf of germs of restrictions to $A$ of smooth functions on $M$, the pair $(A,\mathcal{C}^\infty_A)$ is a (reduced) differentiable space in the terminology of \cite{Navarro_Gonzalez} (see \cite[Theorem 3.23]{Navarro_Gonzalez}). The space $T_pA$ of Definition \ref{tangent_distribution} coincides with the tangent space defined in \cite[Sections 5.4 and 5.5]{Navarro_Gonzalez}.
\end{rmk}

The next straightforward proposition lists a few basic properties of tangent distributions.

\begin{prp}\label{tangent_prp}\
\begin{itemize}
    \item[a)] The tangent distribution $TA$ to any set $A$ is a closed distribution, whose support is the set of limit points of $A$.
    \item[b)] If $A$ is an embedded submanifold of $M$, then $T_pA$ is the ordinary tangent space to $A$ at every point $p\in A$ (thought of as a subspace of $T_pM$).
    \item[c)] The dimension of $T_pA$ is the minimal $k\in \N$ such that $A$ is locally contained, near $p$, in a real embedded $k$-dimensional submanifold $V$ of $M$. In this case, $T_pA=T_pV$.
\end{itemize}
\end{prp}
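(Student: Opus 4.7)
The plan is to treat the three parts in order, exploiting two complementary descriptions of $T_pA$: as the intersection of kernels $\bigcap_f \ker(df_p)$ over smooth $f$ with $f|_A\equiv 0$, and, dually, as the annihilator in $T_p^*M$ of the subspace spanned by the differentials $df_p$ of such $f$.

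For (a), closedness of $TA$ in $TM$ is immediate from the expression $TA=\bigcap_{f}\{(q,X_q)\in TM\colon X_qf=0\}$, since each set in the intersection is closed by continuity of the first derivatives of $f$. To identify the support, I would argue: if $p$ is not a limit point of $A$, then either $p\notin\overline{A}$ or $p$ is isolated in $A$, and in both cases bump-function constructions supply smooth functions vanishing on $A$ with arbitrary prescribed differential at $p$, forcing $T_pA=\{0\}$. Conversely, if $p$ is a limit point, I would pick $p_n\in A\setminus\{p\}$ with $p_n\to p$, work in local coordinates centered at $p$, pass to a convergent subsequence of the unit vectors $w_n=(p_n-p)/|p_n-p|$, and apply a first-order Taylor expansion at $p$ to any smooth $f$ vanishing on $A$ (hence on $\overline{A}$, hence at $p$) to conclude $\nabla f(p)\cdot w=0$; this produces a nonzero vector in $T_pA$.

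For (b), the inclusion of the ordinary tangent space into $T_pA$ is trivial, by differentiating $f$ along a curve in $A$. For the reverse inclusion I would pick slice coordinates $(x^1,\ldots,x^n)$ centered at $p$ in which $A$ locally coincides with $\{x^{k+1}=\cdots=x^n=0\}$; multiplying each $x^j$ with $j>k$ by a bump function equal to $1$ near $p$ and supported in the coordinate chart, and extending by zero, yields smooth globally defined functions vanishing on $A$. For $X_p\in T_pA$, the product rule at $p$ then forces $X_px^j=0$ for all $j>k$, which is precisely the condition that $X_p$ lie in the ordinary tangent space to $A$.

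Part (c) is the substantive one; set $d:=\dim T_pA$. The easy half is: if $A$ is locally contained near $p$ in a $k$-dimensional embedded submanifold $V$, then every smooth $f$ vanishing on $V$ also vanishes on $A$, so $T_pA\subseteq T_pV$, and by (b) this gives $d\leq k$. For the harder half I would produce a $V$ of dimension exactly $d$ by exploiting the dual viewpoint: the annihilator of $T_pA$ in $T_p^*M$ has dimension $n-d$ and is spanned by differentials $df_p$ with $f|_A\equiv 0$, so I can pick $f_1,\ldots,f_{n-d}$ in this class with $df_1|_p,\ldots,df_{n-d}|_p$ linearly independent and invoke the implicit function theorem to realize $V=\{f_1=\cdots=f_{n-d}=0\}$ as a $d$-dimensional embedded submanifold near $p$. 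By construction $A\subseteq V$ locally, and the dimension count forces the inclusion $T_pA\subseteq T_pV$ from the easy half to be an equality. The main obstacle I anticipate is the repeated passage from locally constructed vanishing functions to globally defined ones: in each part one must multiply a coordinate-built function by a bump to obtain a smooth function on all of $M$ that still vanishes on the entire set $A$, not merely on a neighborhood. Once this bookkeeping is handled, the remaining arguments are elementary linear algebra, the Taylor/compactness step in (a), and the implicit function theorem in (c).
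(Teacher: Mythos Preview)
Your proof is correct and follows essentially the same approach as the paper's: both use the description $TA=\bigcap_f\ker df$ for closedness, slice coordinates for (b), and the selection of $n-d$ functions with independent differentials at $p$ (via the annihilator of $T_pA$) plus the implicit function theorem for (c). The one genuine, if minor, difference is in the second half of (a): the paper argues the contrapositive, showing that $T_pA=\{0\}$ yields $n$ functions giving local coordinates in which $A\cap U\subseteq\{p\}$ (essentially reusing the idea of (c)), whereas you argue directly that a limit point $p$ produces a nonzero vector in $T_pA$ via a compactness-plus-Taylor argument on a sequence $p_n\to p$. Your route here is slightly more elementary, avoiding the inverse/implicit function theorem in this step; the paper's route has the virtue of making (a) and (c) visibly parallel.
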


\begin{proof}
Notice that, by definition,
\begin{equation}\label{TS}TA=\bigcap_f \ker df\end{equation}
where the intersection is taken over all the functions $f\in C^\infty(M)$ such that $f_{\vert A}=0$.
Therefore $TA$ is closed as a subset of $TM$.

If $p\in M$ is not a limit point of $A$, we have a neighborhood $U$ of $p$ such that $U\cap A\subseteq\{p\}$. Let $x_1,\ldots, x_n$ be local coordinates centered at $p$ and defined on such a neighborhood. Since every $x_j$ is a function vanishing on $A$, one easily deduces that $T_pA=\{0\}$. On the other hand, if $T_pA=\{0\}$, by \eqref{TS} and elementary linear algebra we find $f_1,\ldots, f_n\in C^\infty(M)$ vanishing on $A$ and such that $$\ker df_1(p)\cap\ldots\cap\ker df_n(p)=\{0\}.$$
Therefore these functions give a set of local coordinates on a neighborhood $U$ of $p$, and their common zero set in $U$ has to be $\{p\}$, which cannot be a limit point of $A$. This proves a).

Let us prove b). If $A$ is a submanifold of $M$, for $p\in A$ we have coordinates $x_1,\ldots, x_n$ on a neighborhood $U$ of $p$ such that $$A\cap U=\{q\in U\ :\ x_1(q)=\ldots=x_k(q)=0\}\;.$$ We may assume that $x_1,\ldots, x_n$ are global smooth functions on $M$ by multiplying them with a cut-off function and restricting $U$, as usual. The local coordinates give a frame $X_1,\ldots, X_n$ for $TM$ over $U$ such that $X_jx_k=\delta_{jk}$ (Kronecker delta) and $T_pA= \mathrm{Span}\{X_{k+1, p},\ldots,X_{n, p}\}$. Moreover, $f\in C^\infty(M)$ is such that $f_{\vert A}=0$ if and only if we can write $$f=x_1g_1+\ldots+x_kg_k\qquad \text{on }U$$ for some functions $g_1,\ldots, g_k\in C^\infty(U)$. From this, one readily deduces that $T_pA$ is spanned by $X_{k+1}, \ldots, X_n$, i.e., it coincides with the ordinary tangent space to $A$ at $p$.

We are left with $c)$. If $T_pA$ has dimension $k$, by \eqref{TS} we can find $n-k$ functions $f_{k+1},\ldots, f_n\in C^\infty(M)$ vanishing on $A$ and such that
$$\ker df_1(p)\cap\ldots\cap\ker df_n(p)=T_pA\;.$$
Therefore, there is a neighborhood $U$ of $p$ such that
$$V=\{q\in U\ :\ f_{k+1}(q)=\ldots=f_n(q)=0\}$$
is a $k$-dimensional embedded submanifold which  contains $A\cap U$, and such that $T_pV=T_pA$ (here one uses part b)).
On the other hand, if $S$ is locally contained in an embedded manifold $W$, then $T_pS\subseteq T_pW$, as any function that vanishes on $W$ near $p$ also vanishes on $S$ near $p$. This shows the minimality of $k$ and concludes the proof.
\end{proof}

\subsection{Derived distribution and core of a distribution}\label{derived_sec}

On a one-dimensional manifold $M$ we can identify a distribution with its support (Example \ref{one_dim_distribution}). Hence, by Proposition \ref{tangent_prp}, the tangent distribution to a subset $S$ of $M$ can be identified with the set of limit points of $S$, i.e., the so-called derived set of $S$. This motivates the terminology in the following definition.

\begin{dfn}[\textbf{Derived distribution}]\label{derived_distribution}
	Let $\distr\subseteq TM$ be a real distribution. The distribution \begin{equation*}
	\distr':=\distr\cap TS_{\distr}
	\end{equation*} will be called the derived distribution of $\distr$. Analogously, if $\distr\subseteq \C TM$ is a complex distribution on $M$, its derived distribution is defined as \begin{equation*}
	\distr':=\distr\cap \C TS_{\distr}.
	\end{equation*}
	
	In analogy with the corresponding notion for sets, a distribution equal to its derived distribution will be called \textbf{perfect}.
\end{dfn}

Notice that while the definition makes perfect sense for general distributions, we will use it only for closed ones. We collect a few basic properties of derived distributions in the next proposition.

\begin{prp}\label{derived_prp}\
\begin{itemize}
    \item[a)] The derived distribution of a closed (real or complex) distribution is closed.
    \item[b)] If $S\subseteq M$ is perfect (that is, it coincides with the set of its limit points), then its tangent distributions $TS$ and $\C TS$ are perfect.
    \item[c)] Complexification and the operation of taking the derived distribution commute: $\C \mathcal{D}'=(\C\mathcal{D})'$ for every real distribution $\mathcal{D}$.
    \item[d)] If $\mathcal{D}$ and $\mathcal{E}$ are two (real or complex) distributions such that $\mathcal{D}\subseteq \mathcal{E}$, then $\mathcal{D}'\subseteq\mathcal{E}'$.
\end{itemize}
\end{prp}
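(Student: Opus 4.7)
The plan is to dispatch each of the four items in sequence, since all of them are formal consequences of the definitions together with the previously established Proposition \ref{tangent_prp}. The common ingredient is the characterization $\distr' = \distr \cap TS_{\distr}$ (resp.\ $\distr' = \distr \cap \C TS_{\distr}$), which reduces everything to properties of intersections, of the support map $\distr\mapsto S_{\distr}$, and of the tangent distribution $A\mapsto TA$.

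For part (a), I would simply note that $\distr'$ is the intersection of $\distr$, closed by hypothesis, with $TS_{\distr}$ (or $\C TS_{\distr}$), which is closed by item a) of Proposition \ref{tangent_prp}. For part (b), the key observation is again item a) of Proposition \ref{tangent_prp}: the support $S_{TS}$ of the tangent distribution equals the set of limit points of $S$. If $S$ is perfect, this set is just $S$ itself, so $(TS)' = TS \cap TS_{TS} = TS \cap TS = TS$, proving perfectness. The argument for $\C TS$ is identical.

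For part (c), I would use two facts. First, the support of $\C\distr$ coincides with the support of $\distr$, because $\distr_p = \{0\}$ if and only if $(\C\distr)_p = \{0\}$. Second, complexification commutes with intersection for real subspaces of a real vector space, i.e.\ $\C(V\cap W) = \C V \cap \C W$ when $V, W \subseteq U$ are real. Combining these,
\[
\C\distr' \;=\; \C(\distr \cap TS_{\distr}) \;=\; \C\distr \cap \C TS_{\distr} \;=\; \C\distr \cap \C TS_{\C\distr} \;=\; (\C\distr)'.
\]
Finally, part (d) follows from two monotonicity observations: $\distr \subseteq \mathcal{E}$ implies $S_{\distr}\subseteq S_{\mathcal{E}}$, and $A\subseteq B$ implies $TA \subseteq TB$ (since by the characterization $TA = \bigcap_{f|_A=0}\ker df$, a larger set of vanishing functions produces a smaller intersection). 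Hence $\distr' = \distr \cap TS_{\distr} \subseteq \mathcal{E}\cap TS_{\mathcal{E}} = \mathcal{E}'$.

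None of the steps should present serious obstacles; the arguments are bookkeeping exercises given Proposition \ref{tangent_prp}. The only point requiring a modicum of care is part (c), where one must track the distinction between the real and complex formulations of the derived distribution and invoke the (elementary but easy to take for granted) commutation of complexification with intersection. If I were worried about length, I would probably write (a) and (d) as one-line observations and give the short computation above for (c), leaving (b) as a single sentence referencing item a) of Proposition \ref{tangent_prp}.
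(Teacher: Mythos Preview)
Your proposal is correct and follows essentially the same approach as the paper, which dispatches parts (a) and (b) by invoking Proposition~\ref{tangent_prp}~a), declares (c) trivial, and derives (d) from the chain of implications $\mathcal{D}\subseteq\mathcal{E}\Rightarrow S_{\mathcal{D}}\subseteq S_{\mathcal{E}}\Rightarrow TS_{\mathcal{D}}\subseteq TS_{\mathcal{E}}$. Your write-up is simply more explicit, spelling out the intersection argument for (a), the computation $(TS)'=TS\cap TS_{TS}=TS\cap TS$ for (b), and the commutation of complexification with intersection for (c).
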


\begin{proof}
Parts a) and b) follow from part a) of Proposition \ref{tangent_prp}. Part c) is trivial. Part d) is an immediate consequence of the implications $\mathcal{D}\subseteq\mathcal{E}\Longrightarrow S_{\mathcal{D}}\subseteq S_{\mathcal{E}}\Longrightarrow (\C)TS_{\mathcal{D}}\subseteq (\C)TS_{\mathcal{E}}$ plus part c).
\end{proof}

\begin{rmk} In one dimension, the derived distribution of the distribution supported on the closed set $S$ (see Example \ref{one_dim_distribution}) is the distribution supported on its derived set. The same may not be true if $S$ is not closed, because not every limit point of $S$ is an element of $S$ in general.
\end{rmk}

It is natural to iterate the operation of "taking the derived distribution". Let then $\distr$ be a (real or complex) distribution and let $\alpha$ be an ordinal. We define $\distr^{(\alpha)}$ by transfinite recursion (see \cite{Ciesielski}, Theorem 4.3.1):\begin{enumerate}
	\item if $\alpha=0$, we set $\distr^{(0)}:=\distr$;
	\item if $\alpha+1$ is a successor ordinal, we set $\distr^{(\alpha+1)}:=(\distr^{(\alpha)})'$;
	\item if $\alpha$ is a limit ordinal, we set $\distr^{(\alpha)}:=\bigcap_{\beta<\alpha}\distr^{(\beta)}$.
\end{enumerate}
If $\distr$ is closed, then every $\distr^{(\alpha)}$ is closed (this is easily proved by transfinite induction). We have the following Cantor--Bendixson-type theorem (cf. \cite[Thm. 6.2.4]{Ciesielski}).

\begin{thm}\label{Cantor_Bendixson}
	If $\distr$ is a closed (real or complex) distribution, then there exists a countable ordinal $\alpha$ such that $\distr^{(\alpha)}$ is perfect.
\end{thm}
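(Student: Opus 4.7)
The plan is to adapt the classical Cantor--Bendixson theorem, replacing points in a second countable topological space by vectors in $TM$ (or $\C TM$): strict decreases in the transfinite chain $\{\distr^{(\alpha)}\}$ will be ``detected'' by distinct basic open subsets of $TM$, and since $TM$ has a countable topological base there can only be countably many such detections.

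First, I would verify by transfinite induction that $\distr=\distr^{(0)}\supseteq \distr^{(1)}\supseteq\ldots$ is a decreasing chain of \emph{closed} distributions. Closedness at successor steps is Proposition \ref{derived_prp}(a); at limit steps it is automatic because an arbitrary intersection of closed sets is closed. Monotonicity at successor steps follows from Proposition \ref{derived_prp}(d) applied inductively, and at limit steps from the very definition as an intersection.

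Next, using that smooth manifolds are second countable (so $TM$ and $\C TM$ are too), I would fix a countable base $\{U_n\}_{n\in \N}$ for the topology of the relevant (co)tangent bundle. Let $S$ be the class of ordinals $\alpha$ for which the strict inclusion $\distr^{(\alpha+1)}\subsetneq \distr^{(\alpha)}$ holds. For each $\alpha\in S$, choose a witness $v_\alpha\in \distr^{(\alpha)}\setminus \distr^{(\alpha+1)}$; since $\distr^{(\alpha+1)}$ is closed, one can pick $n(\alpha)\in \N$ with $v_\alpha\in U_{n(\alpha)}$ and $U_{n(\alpha)}\cap \distr^{(\alpha+1)}=\emptyset$. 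The key step is then the injectivity of $\alpha\mapsto n(\alpha)$: if $\alpha<\beta$ are both in $S$, monotonicity gives $v_\beta\in \distr^{(\beta)}\subseteq \distr^{(\alpha+1)}$, so $v_\beta\notin U_{n(\alpha)}$, whereas $v_\beta\in U_{n(\beta)}$, forcing $n(\alpha)\neq n(\beta)$.

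From injectivity, $S$ is at most countable, hence its supremum $\gamma$ is a countable ordinal (a countable set of countable ordinals is bounded below $\omega_1$). Any countable $\alpha_0>\gamma$ lies outside $S$, so $\distr^{(\alpha_0+1)}=\distr^{(\alpha_0)}$, i.e.\ $\distr^{(\alpha_0)}$ is perfect, completing the proof. The argument is identical in the complex case. I do not anticipate a serious obstacle: the only conceptual point is recognizing that second countability of $TM$ plays the role of second countability of the ambient space in the classical theorem, and the bookkeeping on ordinals is entirely standard.
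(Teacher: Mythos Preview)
Your argument is correct and follows essentially the same approach as the paper: both exploit the fact that a strictly decreasing transfinite chain of closed subsets of a second countable space (here $TM$ or $\C TM$) must have countable length. The only difference is cosmetic---the paper argues by contradiction and invokes this fact as a black box (Theorem~6.2.1(iii) of Ciesielski), whereas you unpack it directly via the countable-base injection $\alpha\mapsto n(\alpha)$, making your version more self-contained.
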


\begin{proof}
As it is customary, we denote by $\omega_1$ the smallest uncountable ordinal. Recall that $\omega_1=\{\alpha\colon\ \alpha<\omega_1\}$. The recursion above gives a function \begin{eqnarray*}
		\omega_1&\rightarrow& \{\text{closed distributions on }M\}\\
		\alpha&\mapsto& \distr^{(\alpha)}.
	\end{eqnarray*} satisfying the monotonicity property \begin{equation*}
	\alpha_1<\alpha_2 \Longrightarrow \distr^{(\alpha_1)}\supseteq \distr^{(\alpha_2)}.
	\end{equation*}
Assume by contradiction that no $\distr^{(\alpha)}$, with $\alpha<\omega_1$, is perfect. Then the mapping above is injective, and this contradicts the fact that if $\{C_\alpha\colon\ \alpha<\beta\}$ is a strictly decreasing family of closed subsets of $TM$, then the ordinal $\beta$ is at most countable. This is part (iii) of Theorem 6.2.1 of \cite{Ciesielski} (the result is stated there for $\R^n$ in place of $TM$, but the generalization is straightforward).
\end{proof}

Thanks to Theorem \ref{Cantor_Bendixson}, we can give the main definition of this section.

\begin{dfn}[\textbf{Core of a distribution}]\label{core_dfn}
Let $\distr$ be a (real or complex) closed distribution. Then $\distr^{(\omega_1)}$ will be called the core of $\distr$, and denoted $\core(\distr)$. Equivalently, $\core(\distr)=\distr^{(\alpha)}$, where $\alpha$ is the minimal ordinal such that $(\distr^{(\alpha)})'=\distr^{(\alpha)}$.
\end{dfn}

In view of the remark after Proposition \ref{derived_prp}, it is clear that there are plenty of examples of closed distributions $\mathcal{D}$ for which $\mathfrak{C}(\mathcal{D})\subsetneq \mathcal{D}^{(k)}$ for every finite $k$. Nevertheless, in the examples considered in this paper, it is always the case that $\mathfrak{C}(\mathcal{D})=\mathcal{D}^{(k)}$ for some finite $k$. 

It is worth looking explicitly at a simple two-dimensional example where the iteration operation stabilizes after more than one step.

\begin{ex} Let $M=\R^2$ and consider the quadratic form on $T\R^2$ defined by $B_{(x,y)}=xdx\otimes dx+ydy\otimes dy$. The closed distribution $\mathcal{D}:=\ker B$ (cf. Example \ref{ex_kernel}) is easily seen to be:\begin{eqnarray*}
\mathcal{D}_{(x,y)}=
\begin{cases}
\R\partial_x\qquad &x=0,\ y\neq 0\\
\R\partial_y\qquad &x\neq0,\ y= 0\\
\R\partial_x+\R\partial_y\qquad &x=0,\ y= 0\\
\{0\}\qquad &x\neq 0,\ y\neq 0
\end{cases}
\end{eqnarray*}
Hence, $S_{\mathcal{D}}$ is the union of the coordinate axes and \begin{eqnarray*}
(TS_{\mathcal{D}})_{(x,y)}=
\begin{cases}
\R\partial_y\qquad &x=0,\ y\neq 0\\
\R\partial_x\qquad &x\neq0,\ y= 0\\
\R\partial_x+\R\partial_y\qquad &x=0,\ y= 0\\
\{0\}\qquad &x\neq 0,\ y\neq 0
\end{cases}
\end{eqnarray*}
Therefore,
\begin{eqnarray*}
\mathcal{D}'_{(x,y)}=
\begin{cases}
\R\partial_x+\R\partial_y\qquad &x=0,\ y= 0\\
\{0\}\qquad &\text{otherwise}
\end{cases}
\end{eqnarray*}
The support of the derived distribution $\mathcal{D}'$ is the origin, and one finally gets that $\mathcal{D}''$ and $\mathfrak{C}(\mathcal{D})$ are the zero distribution.
\end{ex}

The next elementary lemma may be of help in describing 
the core of a distribution.

\begin{lem}\label{ovvio_lem} Suppose that $\distr$ is a closed (real or complex) distribution on $M$ and that $\mathcal{E}$ is a perfect (real or complex) distribution such that $\mathcal{E}\subseteq\distr$. Then $\mathcal{E}\subseteq \core(\distr)$. \end{lem}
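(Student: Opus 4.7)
The natural approach is transfinite induction on the ordinal $\alpha$ to show that $\mathcal{E} \subseteq \mathcal{D}^{(\alpha)}$ for every ordinal $\alpha$; then specializing to $\alpha = \omega_1$ gives $\mathcal{E} \subseteq \mathcal{D}^{(\omega_1)} = \mathfrak{C}(\mathcal{D})$.

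For the base case $\alpha = 0$, the inclusion $\mathcal{E} \subseteq \mathcal{D}^{(0)} = \mathcal{D}$ is precisely the hypothesis. For the successor step, assuming $\mathcal{E} \subseteq \mathcal{D}^{(\alpha)}$, I would apply the monotonicity of the derivation operation (Proposition \ref{derived_prp}, part d)) to obtain $\mathcal{E}' \subseteq (\mathcal{D}^{(\alpha)})' = \mathcal{D}^{(\alpha+1)}$. Since $\mathcal{E}$ is perfect by assumption, $\mathcal{E} = \mathcal{E}'$, and hence $\mathcal{E} \subseteq \mathcal{D}^{(\alpha+1)}$. For the limit step, if $\alpha$ is a limit ordinal and $\mathcal{E} \subseteq \mathcal{D}^{(\beta)}$ for all $\beta < \alpha$, then directly from the definition $\mathcal{D}^{(\alpha)} = \bigcap_{\beta < \alpha} \mathcal{D}^{(\beta)}$ one has $\mathcal{E} \subseteq \mathcal{D}^{(\alpha)}$.

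There is no serious obstacle here: the statement is essentially a tautological consequence of the monotonicity property (d) of Proposition \ref{derived_prp} combined with the definitions by transfinite recursion. The only subtlety worth flagging is that one must take care to apply monotonicity, not any stronger "continuity" of the derivation, and to use perfection of $\mathcal{E}$ exactly at the successor step to pass $\mathcal{E}$ through the derivative.
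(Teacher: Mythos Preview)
Your proof is correct and follows exactly the same approach as the paper: transfinite induction on $\alpha$ to show $\mathcal{E}\subseteq\mathcal{D}^{(\alpha)}$, with the successor step handled by part d) of Proposition~\ref{derived_prp} together with the perfection of $\mathcal{E}$, and the base and limit cases being trivial.
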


\begin{proof} It is enough to show, by transfinite induction, that $\mathcal{E}\subseteq\distr^{(\alpha)}$ for every ordinal $\alpha$. The case of a successor ordinal follows from part d) of Proposition \ref{derived_prp}. The rest is trivial.
\end{proof}

Notice that the closedness of $\distr$ is not explicitly used in the proof. However, it is needed to ensure the existence of the core.

\subsection{A compactness lemma}\label{compactness_sec}

A subset $\Omega\subseteq (\C)TM$ is said to be \textbf{conical} if $v\in \Omega$ implies $tv\in \Omega$ for every $t>0$.

\begin{prp}\label{compactness_prp} Let $M$ be compact and let $\distr$ be a closed real (resp. complex) distribution. Then any cover of $\distr\setminus\left(M\times \{0\}\right)$ by open conical subsets of the real (resp. complexified) tangent bundle has a finite subcover.
\end{prp}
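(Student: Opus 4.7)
The plan is to exploit the conical hypothesis to reduce the statement to ordinary compactness of a unit sphere bundle. First I would fix a Riemannian metric $g$ on $M$, extended to a Hermitian metric on $\C TM$ in the complex case, and let $\sphere\subseteq (\C)TM$ denote the associated unit sphere bundle (the set of tangent vectors of length $1$). Since $M$ is compact and the fibers of $\sphere$ are round spheres inside $T_pM$ or $\C T_pM$, $\sphere$ is compact.

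Next I would consider $K:=\distr\cap \sphere$. Because $\distr$ is a closed subset of $(\C)TM$ by assumption, $K$ is a closed subset of the compact space $\sphere$, hence compact. Given an open conical cover $\{U_\alpha\}_{\alpha\in A}$ of $\distr\setminus(M\times\{0\})$, each $U_\alpha$ is open in the tangent bundle, so $\{U_\alpha\cap \sphere\}_{\alpha\in A}$ is a family of open subsets of $\sphere$ which covers $K$ (every unit vector in $\distr$ is nonzero and hence lies in some $U_\alpha$). Compactness of $K$ then yields a finite subcover, say by $U_{\alpha_1},\ldots,U_{\alpha_k}$.

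To finish, I would verify that the same finite family already covers the whole of $\distr\setminus(M\times\{0\})$. Given any nonzero $v\in \distr_p$, the rescaling $\hat v:=v/\|v\|_g$ still belongs to $\distr_p$ because the fibers of $\distr$ are (real or complex) linear subspaces, and $\hat v\in \sphere$ by construction; hence $\hat v\in K$ and therefore $\hat v\in U_{\alpha_j}$ for some $j$. Since $U_{\alpha_j}$ is conical and $v=\|v\|_g\,\hat v$ is a positive scalar multiple of $\hat v$, this forces $v\in U_{\alpha_j}$, as required.

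I do not anticipate a real obstacle here: the only delicate point is that the fibers of $\distr$ are closed under positive rescaling, which is immediate from the linearity of $\distr_p$. Everything else is a routine combination of the closedness of $\distr$ with the compactness of the sphere bundle over the compact base $M$.
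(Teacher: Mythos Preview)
Your proposal is correct and follows essentially the same approach as the paper: fix a metric, reduce to the compactness of the set $\sphere(\distr)$ of unit vectors in $\distr$ (closed in the compact sphere bundle, hence compact), and then use conicality to pass from a finite subcover of $\sphere(\distr)$ to one of $\distr\setminus(M\times\{0\})$. The paper's version is more terse, merely stating that the claim ``boils down to the compactness of $\sphere(\distr)$'', while you spell out the rescaling step explicitly.
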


\begin{proof}
Fix a metric $g$ on $M$. Then we define $\sphere(\distr)$ as the set of vectors of unit length in $\distr$. The statement boils down to the compactness of $\sphere(\distr)$, a consequence of the closure of $\distr$ and the compactness of $M$.
\end{proof}

\section{The Levi core of a CR manifold of hypersurface type}\label{Levi_core_sec}

In this section we apply the basic theory of Section \ref{core_sec} to CR manifolds of hypersurface type. In Section \ref{CR_sec} we review the elements of CR geometry we need (cf. \cite{dragomir_tomassini} for a deeper discussion) and we introduce the notion of Levi core. Then we look at a few examples (in Section \ref{CRexamples_sec}) and we specialize the discussion to real hypersurfaces of complex manifolds (in Section \ref{hypersurfaces_sec}). Finally, in Section \ref{local_max_sec}, we discuss the relation between Levi core and local maximum sets. 

\subsection{Notation and basic definitions}\label{CR_sec}

We consider an orientable, connected \textbf{CR manifold} $(M, T^{1,0}M)$ \textbf{of hypersurface type} (in an alternative common terminology, we require $(M, T^{1,0}M)$ to be "of type $(n,1)$"). This means that $M$ is a real smooth manifold of odd dimension $2n+1$ ($n\geq 1$), and that $T^{1,0}M$ is a complex subbundle of $\C TM$ of complex rank $n$ such that the following two properties hold:
\begin{enumerate}
    \item $T^{1,0}M\cap T^{0,1}M=0$, where $T^{0,1}M:=\overline{T^{1,0}M}$;
    \item $T^{1,0}M$ is formally integrable, that is, the commutator of any pair of smooth sections of $T^{1,0}M$ is also a section of $T^{1,0}M$.
\end{enumerate}

From now on, we omit the specification "of hypersurface type" and we refer to any $M$ equipped with a structure as above as a "CR manifold". 

We denote by $H(M)\subset TM$ the maximal complex distribution of the CR manifold, i.e.,
$$H(M)=\Re(T^{1,0}M)=\Re(T^{1,0}M\oplus T^{0,1}M)\;.$$
The real vector bundle $H(M)$ has real rank $2n$ and carries a complex structure $J_b$ defined by $J_b(Z+\overline{Z})=i(Z-\overline{Z})$ for every section $Z$ of $T^{1,0}M$. The real part $\Re:T^{1,0}M\rightarrow H(M)$ is an isomorphism of complex vector bundles with respect to this structure.

Consider now the real line bundle $E$ over $M$ defined by
$$E_p=\{\omega\in T^*_pM\ :\ \ker\omega\supseteq H_p(M)\}\;\qquad (p\in M),$$
where $H_p(M)$ is the fiber at $p$ of the maximal complex distribution. It is easy to see that $E$ is isomorphic to $TM/H(M)$. Since $M$ is orientable and $H(M)$ is oriented by the complex structure $J_b$, it follows that $E$ is orientable. Being defined over a connected manifold, it is globally trivial. We denote by $\Theta(M, T^{1,0}M)$ the set of global nowhere vanishing smooth sections of $E$, that is, $\theta\in \Theta(M,T^{1,0}M)$ if and only if $\theta$ is a real nowhere vanishing smooth one-form with the property that $\theta(Z)=0$ for every section $Z$ of $T^{1,0}M$. Elements of $\Theta(M,T^{1,0}M)$ are called \textbf{pseudo-Hermitian structures} on $(M,T^{1,0}M)$.

Given a pseudo-Hermitian structure $\theta$, the associated \textbf{Levi form} is the Hermitian form on $T^{1,0}M$ defined by
$$\lambda_\theta(Z,\overline{W}):=\dfrac{1}{2i}d\theta(Z,\overline{W})=\dfrac{i}{4}\theta([Z,\overline{W}])\;,$$ where $Z,W$ are smooth sections of $T^{1,0}M$. If $\theta_1,\ \theta_2\in\Theta(M,T^{1,0}M)$, then there exists a nowhere vanishing $g\in C^\infty(M,\R)$ such that $\theta_1=g\theta_2$, and we have \begin{equation}\label{eq_cambiotheta}\lambda_{\theta_1}=g\lambda_{\theta_2}\;.\end{equation}

We say that $(M, T^{1,0}M)$ is \textbf{pseudoconvex} if $\lambda_{\theta}$ is semidefinite for some (and hence for every) $\theta\in\Theta(M,T^{1,0}M)$. If $(M, T^{1,0}M)$ is pseudoconvex, we denote by $\Theta_+(M,T^{1,0}M)$ the set of pseudo-Hermitian structures such that the corresponding Levi form $\lambda_\theta$ is nonnegative definite.

From Equation \eqref{eq_cambiotheta}, it follows that the null-distribution (see Example \ref{ex_kernel}) of the Levi form $\lambda_\theta$ is independent of $\theta$: we call it the \textbf{Levi distribution} of the CR manifold and denote it by $\levinull$. As observed in Example \ref{ex_kernel}, $\levinull$ is a closed complex distribution on $M$. Explicitly, for $p\in M$ we have
$$\levinull_p=\{Z_p\in T^{1,0}_pM\ :\ \lambda_{\theta,p}(Z_p,\overline{W}_p)=0\quad \forall W_p\in T^{1,0}M\}\;.$$
If $(M, T^{1,0}M)$ is pseudoconvex, by elementary linear algebra one sees that $\levinull_p$ consists of those vectors $Z_p\in T_p^{1,0}M$ such that $\lambda_{\theta,p}(Z_p,\overline{Z}_p)=0$. We finally recall the main definition of the paper (anticipated in the introduction).

\begin{dfn} The \textbf{Levi core} of a CR manifold is the core $\mathfrak{C}(\mathcal{N})$ of its Levi distribution $\levinull$ (in the sense of Definition \ref{core_dfn}).
\end{dfn}

The next two propositions allow to control, from below and above respectively, the Levi core in various situations.

\begin{prp}\label{horizontal_lem}
Let $V\subset M$ be an embedded submanifold of positive dimension and such that $T_pV$ is a complex (i.e., $J_b$-invariant) subspace of $H_p(M)$ for every $p\in V$. Then \begin{equation}\label{lower_core}
\{(p,Z_p)\colon \ p\in V\text{ and } Z_p\in \C T_pV\cap T^{1,0}_pM\}\subseteq \core(\levinull).
\end{equation}
\end{prp}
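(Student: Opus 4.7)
The strategy is to invoke Lemma \ref{ovvio_lem}. Denote by $\mathcal{E}$ the complex distribution appearing on the left-hand side of \eqref{lower_core}, namely $\mathcal{E}_p := \C T_p V \cap T^{1,0}_p M$ for $p \in V$ and $\mathcal{E}_p := \{0\}$ otherwise. Since $\levinull$ is closed (being the null distribution of the Hermitian Levi form, cf.\ Example \ref{ex_kernel}), the core $\core(\levinull)$ is defined, and by Lemma \ref{ovvio_lem} it will suffice to verify that (i) $\mathcal{E} \subseteq \levinull$ and (ii) $\mathcal{E}$ is perfect.

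To establish (i), I fix a pseudo-Hermitian structure $\theta \in \Theta(M, T^{1,0}M)$. By definition $\theta$ annihilates $H(M)$, and the hypothesis $T_p V \subseteq H_p(M)$ forces $\theta|_V \equiv 0$ as a one-form on $V$. Taking the exterior derivative (which commutes with pullback to $V$) gives $(d\theta)|_V \equiv 0$, so the complex bilinear extension of $d\theta$ vanishes on $\C T_p V \times \C T_p V$ for every $p \in V$. For $Z \in \mathcal{E}_p$ the conjugate $\overline{Z}$ also lies in $\C T_p V$ (which is closed under conjugation), so $\lambda_\theta(Z, \overline{Z}) = \tfrac{1}{2i} d\theta(Z, \overline{Z}) = 0$, whence $Z \in \levinull_p$.

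For (ii), I first identify the support: since $T_p V$ is $J_b$-invariant and of positive real dimension, it has positive even real dimension, so $\mathcal{E}_p = \C T_p V \cap T^{1,0}_p M$ has positive complex dimension for every $p \in V$; hence $S_\mathcal{E} = V$. Because $V$ is an embedded submanifold, part (b) of Proposition \ref{tangent_prp} identifies the abstract tangent distribution $T_p V$ with the ordinary tangent space. The inclusion $V \subseteq S_\mathcal{E}$ implies $T_p V \subseteq T_p S_\mathcal{E}$ (any smooth function vanishing on $S_\mathcal{E}$ vanishes on $V$), so $\mathcal{E}_p \subseteq \C T_p V \subseteq \C T_p S_\mathcal{E}$. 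Combined with the trivial inclusion $\mathcal{E}'_p \subseteq \mathcal{E}_p$, this yields $\mathcal{E}'_p = \mathcal{E}_p$ for $p \in V$; the case $p \notin V$ is immediate as both sides vanish. Hence $\mathcal{E}$ is perfect.

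The one point that requires care is that $V$ may not be closed in $M$, but this is harmless: Lemma \ref{ovvio_lem} does not demand closedness of $\mathcal{E}$ or $S_\mathcal{E}$, and the perfection argument above uses only the inclusion $V \subseteq S_\mathcal{E}$ in conjunction with the $J_b$-invariance of $T V$.
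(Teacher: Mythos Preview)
Your proof is correct and follows essentially the same strategy as the paper: define the distribution $\mathcal{E}$, then apply Lemma~\ref{ovvio_lem} after checking that $\mathcal{E}\subseteq\levinull$ and that $\mathcal{E}$ is perfect. Your argument for the inclusion $\mathcal{E}\subseteq\levinull$ is in fact slightly cleaner than the paper's: rather than extending to a local section and computing that $[Z,\overline{Z}]$ is tangent to $V$ (hence annihilated by $\theta$), you observe directly that $i^*\theta=0$ forces $i^*(d\theta)=d(i^*\theta)=0$, which immediately gives $\lambda_\theta(Z,\overline{Z})=0$ for $Z\in\mathcal{E}_p$; both arguments then invoke pseudoconvexity to pass from $\lambda_\theta(Z,\overline{Z})=0$ to $Z\in\levinull_p$.
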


As observed in the proof, the LHS of \eqref{lower_core} is nontrivial for every $p\in V$, and therefore the proposition implies in particular that $S_{\mathfrak{C}(\mathcal{N})}\supseteq V$.

\begin{proof} Denote by $\mathcal{E}$ the complex distribution on $M$ defined by the LHS of \eqref{lower_core} (and trivial fiber for $p\notin V$). Thanks to Lemma \ref{ovvio_lem}, our task boils down to proving the following two statements: \begin{itemize}
    \item[a)] $\mathcal{E}$ is a perfect distribution.
    \item[b)] $\mathcal{E}\subseteq \mathcal{N}$.
\end{itemize}

To prove statement a) it is clearly enough to check that $V$ is contained in the support of $\mathcal{E}$. Given $p\in V$ and a nonzero $X_p\in T_pV$ (it exists because $V$ is positive dimensional), we have $X_p-iJ_bX_p\in \C T_pV\cap T^{1,0}M$, because $T_pV$ is $J_b$-invariant. Thus, the fiber at $p$ of $\C TV\cap T^{1,0}M$ is nontrivial, as we wanted.

To verify the inclusion b), we pick $X_p\in T_pV$ ($p\in V$) and check that $Z_p:=X_p-iJ_bX_p\in \mathcal{N}_p$. Let $X$ be a local smooth section of $TM$ extending $X_p$ and such that $X_q\in T_qV$ for every $q\in V$ near $p$, and put $Z:=X-iJX$.
By the $J_b$-invariance assumption, the restriction of $Z$ to $V$ is a section of $\C TV$. Then $[Z,\overline{Z}]_q\in \C T_qV$ at every $q\in V$ near $p$. Because $\C T_pV\subseteq T^{1,0}_pM\oplus T^{0,1}_pM$, this implies in turn that $\theta([Z,\overline{Z}])$ vanishes on $V$ for every pseudo-Hermitian structure $\theta$. Since $M$ is pseudoconvex, this means that $Z_p\in \mathcal{N}_p$, completing the proof of b).\end{proof}

Before stating the second proposition, we need a couple of definitions, where $M$ is always a CR manifold of hypersurface type.

\begin{dfn}[\textbf{Zero holomorphic dimension}]
A subset $A\subseteq M$ has zero holomorphic dimension at $p$ if
$$\C T_pA\cap \levinull_p=\{0\}\;.$$
\end{dfn}
Notice that here $T_pA$ is the tangent space in the sense of Definition \ref{tangent_distribution}.

\begin{dfn}[\textbf{Weakly regular stratification}]\label{weakly_reg_dec_dfn}
A weakly regular stratification of $M$ is a finite collection of closed subsets $\Sigma_i\subseteq M$ ($i=0,\ldots, N$) such that:
\begin{enumerate}
    \item $\Sigma_N\subset \Sigma_{N-1}\subset\ldots\subset \Sigma_1\subset \Sigma_0=M$
    \item for each $i=0,\ldots, N-1$, the set $\Sigma_i\setminus \Sigma_{i+1}$ has zero holomorphic dimension at each of its points.
\end{enumerate}

We say that $M$ is \textbf{weakly regular} if there exists a weakly regular decomposition of $M$ with $\Sigma_N=\emptyset$.
\end{dfn}

Notice that, since the $\Sigma_i$'s are closed and tangent distributions are local objects, the assumption is equivalent to $\C T_p\Sigma_i\cap \levinull_p=\{0\}$ for every $p\in \Sigma_i\setminus \Sigma_{i+1}$ and every $i=0,\ldots, N-1$.

\begin{rmk}
The notion of zero holomorphic dimension originates from work of Kohn \cite{Kohn_subelliptic} (see also \cite[p. 40]{Catlin_global}). Definition \ref{weakly_reg_dec_dfn} generalizes the notion of weakly regular boundary in \cite{Catlin_global} (which in turn had a precursor in \cite{Diederich_Fornaess_analytic}). In Catlin's definition, $M$ is the boundary of a smooth bounded domain, and $\Sigma_N$ is empty. Our definition is phrased slightly differently in terms of smooth Zariski tangent spaces, but it is an immediate consequence of part c) of Proposition \ref{tangent_prp} that the two definitions coincide, modulo these additional assumptions.
\end{rmk}

\begin{prp}\label{weaklydecomp_prp}If $\{\Sigma_j\}_{j=0,\ldots, N}$ is a weakly regular stratification of $M$, then $S_{\core(\levinull)}\subseteq \Sigma_N$.\end{prp}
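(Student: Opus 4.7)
The plan is to prove the stronger statement that $S_{\mathfrak{C}(\mathcal{N})}\subseteq \Sigma_i$ for every $i=0,1,\ldots,N$, by finite induction on $i$. The base case $i=0$ is just $S_{\mathfrak{C}(\mathcal{N})}\subseteq M$, which is automatic.

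For the inductive step, assume $S_{\mathfrak{C}(\mathcal{N})}\subseteq \Sigma_i$ and argue by contradiction: suppose there is a point $p\in S_{\mathfrak{C}(\mathcal{N})}\setminus \Sigma_{i+1}$. By the inductive hypothesis $p\in \Sigma_i$, so $p\in \Sigma_i\setminus\Sigma_{i+1}$, and the weak regularity assumption gives
\[
\C T_p\Sigma_i \cap \levinull_p=\{0\}.
\]
I will derive a contradiction by showing $\mathfrak{C}(\levinull)_p\subseteq \C T_p\Sigma_i \cap \levinull_p$, which would force $\mathfrak{C}(\levinull)_p=\{0\}$, contradicting $p\in S_{\mathfrak{C}(\mathcal{N})}$.

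The inclusion $\mathfrak{C}(\levinull)_p\subseteq \levinull_p$ is automatic from $\mathfrak{C}(\levinull)\subseteq \levinull$. For the other inclusion, the key is that the core is a \emph{perfect} distribution, i.e., $\mathfrak{C}(\levinull)=\mathfrak{C}(\levinull)'$ (this is the defining property of the core in Definition \ref{core_dfn}). Hence
\[
\mathfrak{C}(\levinull)_p=\mathfrak{C}(\levinull)_p\cap \C T_p S_{\mathfrak{C}(\levinull)}.
\]
Now I use the obvious monotonicity of the tangent distribution with respect to set inclusion: if $A\subseteq B$, then every $f\in C^\infty(M)$ vanishing on $B$ also vanishes on $A$, hence $T_pA\subseteq T_pB$, and by complexification $\C T_pA\subseteq \C T_pB$. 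Applied to the inductive hypothesis $S_{\mathfrak{C}(\levinull)}\subseteq \Sigma_i$, this yields $\C T_pS_{\mathfrak{C}(\levinull)}\subseteq \C T_p\Sigma_i$, and therefore $\mathfrak{C}(\levinull)_p\subseteq \C T_p\Sigma_i$, closing the argument.

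No step looks problematic. The only conceptual ingredient beyond the definitions is the monotonicity of the Zariski-style tangent distribution in the subset, which is immediate from Definition \ref{tangent_distribution}. The argument uses perfectness of the core (from Definition \ref{core_dfn}) in an essential way; in fact, it shows the slightly stronger statement that any perfect subdistribution of $\levinull$ is supported inside $\Sigma_N$, and combined with Lemma \ref{ovvio_lem} this is exactly the content of the proposition.
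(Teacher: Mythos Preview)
Your proof is correct and follows essentially the same approach as the paper's. The only cosmetic difference is that the paper packages the finite induction as a ``maximal index'' argument (take the largest $j$ with $S_{\mathfrak{C}(\levinull)}\subseteq\Sigma_j$ and derive a contradiction if $j<N$), but the key steps---perfectness of the core giving $\mathfrak{C}(\levinull)_p\subseteq \C T_pS_{\mathfrak{C}(\levinull)}$, monotonicity of the tangent distribution giving $\C T_pS_{\mathfrak{C}(\levinull)}\subseteq \C T_p\Sigma_j$, and the weak regularity hypothesis forcing $\mathfrak{C}(\levinull)_p=\{0\}$---are identical.
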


\begin{proof}
Without loss of generality, we may assume that the Levi core is nontrivial. Let $j$ be the largest index for which $S_{\mathfrak{C}(\mathcal{N})}\subseteq \Sigma_j$, and assume by contradiction that $j<N$. By the stability of the Levi core, \[
\mathfrak{C}(\mathcal{N}) = \mathfrak{C}(\mathcal{N})\cap \C TS_{\mathfrak{C}(\mathcal{N})}\subseteq \mathcal{N}\cap \C T\Sigma_j.
\]
If $p\in \Sigma_j\setminus \Sigma_{j+1}$ is in the support of the Levi core, then $\mathfrak{C}(\mathcal{N})_p\subseteq \mathcal{N}_p\cap \C T_p\Sigma_j=\{0\}$, contradicting the maximality of $j$.
\end{proof}

Applying Proposition \ref{weaklydecomp_prp} to the case of a weakly regular CR manifold, we obtain the following.

\begin{cor}
If $M$ is weakly regular, then its Levi core is trivial.
\end{cor}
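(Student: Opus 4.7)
The plan is to apply Proposition \ref{weaklydecomp_prp} directly. By the definition of weakly regular (Definition \ref{weakly_reg_dec_dfn}), there exists a weakly regular stratification $\{\Sigma_j\}_{j=0,\ldots,N}$ of $M$ with $\Sigma_N = \emptyset$. Proposition \ref{weaklydecomp_prp} then gives $S_{\core(\levinull)} \subseteq \Sigma_N = \emptyset$, so the support of the Levi core is empty, which is to say $\core(\levinull)_p = \{0\}$ for every $p \in M$.

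There is really no obstacle here; the statement is a one-line consequence of the preceding proposition, and the main content (the inductive argument showing the Levi core is squeezed into $\Sigma_N$) has already been done in the proof of Proposition \ref{weaklydecomp_prp}. The only thing to note is the translation between "the support is empty" and "the distribution is trivial", which follows from the definition of support $S_{\distr}$ as the set of points where $\distr_p \neq \{0\}$.
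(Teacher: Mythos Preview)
Your proof is correct and follows exactly the approach the paper takes: the corollary is stated immediately after Proposition~\ref{weaklydecomp_prp} as a direct application with $\Sigma_N=\emptyset$. The translation from ``empty support'' to ``trivial distribution'' is indeed the only (and trivial) remaining step.
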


Finally, we define $\levinull^\R:=\Re(\levinull)$, which is a real closed distribution on $M$. Since taking the real part of a vector field gives an isomorphism of complex vector bundles between $T^{1,0}M$ and $H(M)$, $\levinull^\R_p$ is a complex subspace of $H(M)_p$ for every $p\in M$. For the same reason, $\Re(\core(\levinull))_p$ is, at every $p\in M$, a complex subspace of $H(M)_p$. However, $\core(\levinull^\R)_p$ need not be. 
We will be mostly using the distribution $\levinull$ instead of $\levinull^\R$, even if the latter may have a more immediate geometric meaning, particularly when Levi-flat foliations or complex analytic varieties are present.

\subsection{A class of $3$-dimensional CR structures}\label{CRexamples_sec}

In this section, we describe the Levi core for certain CR structures on the total space of (real) line and circle bundles on Riemann surfaces. A similar construction was examined in  \cite{cordaro}.

\medskip

Let $Y$ be a Riemann surface. We consider a fiber bundle $\pi:M\to Y$ whose fibers are either $F=\R$ or $F=\mathbb{S}^1$.
Let $t$ be a global coordinate on $F$ and $z$ a local coordinate on $Y$. We define a CR structure on $M$ as follows.

Consider a $(1,0)$-form $\omega$ on $Y$ and a smooth complex-valued function $g\in C^\infty(F)$; we define $T^{1,0}M$ as the common kernel of the two $1$-forms:
$$dt-g(t)\omega\quad\text{and}\quad d\overline{z}\;.$$
If we write locally $\omega=u(z)dz$, then
$$L=\partial_z+g(t)u(z)\partial_t$$
is a local generator for $T^{1,0}M$; hence,
$$\theta=dt-2\Re(g(t)\omega)$$
is a pseudo-Hermitian structure for $(M,T^{1,0}M)$. The Levi form of $\theta$ is locally given by
$$\lambda_{\theta}(L,\overline{L})=\frac{i}{4}\theta([L,\overline{L}])=
\frac{1}{2}\Im(gu_{\bar{z}})+\frac{|u|^2}{2}\Im(\bar{g}g_t)\;.$$

In order to have a pseudoconvex CR manifold, we want $\lambda_{\theta}$ to be semidefinite, hence we want to control the sign of the previous quantity, in terms of $g$ and $\omega$. In order to do that, we need a couple of observations. If $Y$ is compact, there are a holomorphic $1$-form $\eta$ and a smooth function $H$ such that $\omega=\eta+\partial H$ (it follows, for instance, by taking the conjugate of \cite[Theorem 19.9]{Forster_Rsurf}); therefore, locally,
$$u(z)=f(z)+H_{z}(z)$$
with $f(z)$ holomorphic. If instead $Y$ is open, then it is Stein and $H^{p,q}(Y)=0$ for $p\geq 0$, $q\geq 1$. So we can always find $H$ such that $\omega=\partial H$. In both cases, we have that $u_{\bar{z}}=H_{z\bar{z}}$; therefore, even if $u$ is only locally defined, we can express its derivative $u_{\bar{z}}$ in terms of the Laplacian of a global function. Even if the precise value of such Laplacian depends on the choice of local holomorphic coordinate $z$, its sign does not.

We will now examine three possible choices of $g$ and $\omega$ for which $(M,T^{1,0}M)$ is pseudoconvex, which give rise to three different behaviors. We dub the three resulting classes of CR structures Type I, II, and III, for ease of reference.

\subsubsection{Type I}

If $F=\mathbb{S}^1$ and $g\equiv 1$, then
$$2\lambda_{\theta}(L,\overline{L})=\Im(H_{z\bar{z}})=(\Im(H))_{z\bar{z}}\;.$$
So, $M$ is pseudoconvex as soon as $\Im(H)$ is a subharmonic (or superharmonic) function. Set $W_1:=\{p\in Y:\ \triangle \Im (H)(p)=0\}\subseteq Y$; the Levi-null distribution is supported on
$$S_\levinull=\pi^{-1}(W_1),$$
and thus one may easily verify that \[
TS_\levinull = (d\pi)^{-1}TW_1, 
\]
where $d\pi:TM\rightarrow TY$ is the differential of $\pi$. Denote by $W_2$ the set of points $p\in Y$ where $T_pW_1$ is two-dimensional. By Proposition \ref{upper_semicontinuous_prp}, $W_2$ is closed. The closed sets \[\Sigma_2:=\pi^{-1}(W_2)\subseteq \Sigma_1:=\pi^{-1}(W_1)\subseteq \Sigma_0:=M\] define a weakly regular stratification of $M$, essentially because if $L$ is tangent to the support of $S_\levinull$ at some point, necessarily $TW_1$ is two-dimensional at that point. Thus, by Proposition \ref{weaklydecomp_prp}, the support of the Levi core is contained in $\Sigma_2$. It is not difficult to verify that if $H$ is assumed to be real-analytic, then the extended stratification obtained setting $\Sigma_3:=\emptyset$ is still weakly regular, and therefore the manifold has trivial Levi core. 

\medskip

As a particular case, if $\Im(H)$ is harmonic, then $\lambda_\theta\equiv 0$, i.e. the distribution $H(M)$ is integrable. In this case, $M$ is said to be \emph{Levi-flat} and it is foliated in complex leaves: $H(M)$, the tangent distribution to the leaves, has a natural complex structure.

Such a foliation is described by the $1$-form $\theta$.

\begin{rmk}If $Y$ is compact, then $\Im(H)$ is harmonic if and only if it is constant, hence
$$\theta=dt-\Re(\omega)=dt-\Re(\eta)-\Re(\partial H)$$
and, as $H$ may be assumed to be real-valued, $\Re(\partial H)=dH$; moreover, $d\Re(\eta)=\Re(d\eta)=0$. So $d\theta=0$; the foliation on $M$ is induced by a closed form, so every leaf of the foliation has trivial holonomy; hence, by a result of Sacksteder (cfr [Camacho-Neto, Notes to Chapter V, (2), p. 109]), the foliation has no exceptional minimal set, therefore, if no leaf is compact, then all leaves are dense.

If $Y$ is open, then $\Im(H)$ harmonic implies that locally there exists a real-valued harmonic function $K$ such that $K-i\Im(H)$ is antiholomorphic; therefore $v=H+K-i\Im(H)$ is a real-valued function such that $\partial v=\partial H$. Again $\Re(\partial v)=dv$. So, locally, $\theta=dt-dv$; in conclusion, also in this case, $d\theta=0$.\end{rmk}

\subsubsection{Type II}

If $F=\R$, $g(t)=t$, then
$$2\lambda_\theta(L,\overline{L})=\Im(tH_{z\bar{z}})$$
which is of constant sign if and only if it vanishes, hence $\Im(H)_{z\bar{z}}\equiv0$, i.e. $\Im(H)$ is harmonic and $M$ is again Levi-flat.

\subsubsection{Type III}

If $F=\mathbb{S}^1$, let $g(t)=e^{it}$ and $\omega$ be given by the function $H(z)=ie^{-ih(z)}$ with $h:Y\to\mathbb{R}/2\pi\mathbb{Z}$. Then
$$H_z=h_ze^{-ih}\qquad H_{z\bar{z}}=h_{z\bar{z}}e^{-ih}-i|h_z|^2e^{-ih}\;.$$
So, locally,
$$2\lambda_{\theta}(L,\overline{L})=\Im(e^{i(t-h)}h_{z\bar{z}}-i|h_z|^2e^{i(t-h)})+|f+h_ze^{-ih}|^2\Im(e^{-it}ie^{it})$$
$$=h_{z\bar{z}}\sin(t-h)-|h_z|^2\cos(t-h)+|f|^2+|h_z|^2+2\Re(fh_ze^{i(t-h)})\;.$$
Suppose now also that $Y$ is open, then $f(z)\equiv 0$, so
$$2\lambda_{\theta}(L,\overline{L})=h_{z\bar{z}}\sin(t-h)-|h_z|^2\cos(t-h)+|h_z|^2$$
which is always positive if and only if $h_{z\bar{z}}\equiv 0$, i.e. if and only if $h$ is a harmonic function.

In such case
$$2\lambda_{\theta}(L,\overline{L})=|h_z|^2(1-\cos(t-h))$$
so, $\levinull$ is supported at points where
$$h_z=0\qquad\textrm{or}\qquad t=h\;.$$
The second condition describes a section $s$ of $\pi:M\to Y$, whose image is easily checked to be a complex submanifold of $M$ and its tangent bundle will be contained in $\core(\levinull)$, by Lemma \ref{horizontal_lem}.

On the other hand, as $h$ is harmonic, the points where $h_z=0$ are isolated (if $h$ is nonconstant) and, as $\partial_t$ is never contained in $H(M)$, then the set
$$\{p\in Y\ :\ h_z(p)=0\}\times T\mathbb{S}^1$$
is not contained in $\core(\levinull)$.

In conclusion, $\core(\levinull)$ is the tangent bundle to $s(Y)\subset M$.

\subsection{Real hypersurfaces}\label{hypersurfaces_sec}

Given a complex manifold $X$ of complex dimension $n+1$, any smooth real hypersurface $M\subset X$ has a natural structure of CR manifold of hypersurface type; if $j:M\to X$ is the inclusion, we define
$$T^{1,0}M=j^*(\C TM\cap T^{1,0}X)$$
and $(M, T^{1,0}M)$ is then a CR manifold.

The formal integrability of $T^{1,0}M$ follows from the formal integrability of $T^{1,0}X$ and from the integrability of $\C TM$.

\begin{prp}
A smooth one-form $\theta$ on $M$ is a pseudo-Hermitian structure if and only if there exist a neighborhood $U$ of $M$ in $X$ and a smooth function $r:U\to\R$ such that
$$M=\{p\in U\ : \ r(p)=0\}\qquad dr_p\neq 0\ \forall\, p\in M\qquad \theta=j^*d^cr$$
where $d^c:=i(\partial-\dbar)$. Such a function is called a defining function for $M$ on $U$.
\end{prp}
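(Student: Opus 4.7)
The statement has two implications, and I will handle them in sequence.

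For the "if" direction, suppose $r$ is a defining function for $M$ on a neighborhood $U$. Because $r$ is real, $d^c r = i(\partial r-\dbar r)$ is a real one-form, so $\theta := j^*d^c r$ is a real one-form on $M$. To see that $\theta$ annihilates $T^{1,0}M$, take $Z\in T^{1,0}_pM\subseteq T^{1,0}_pX\cap \C T_pM$: since $Z\in \C T_pM$ and $r_{|M}\equiv 0$, we have $Zr=0$, whence $\partial r(Z)=Zr=0$, while $\dbar r(Z)=0$ because $Z$ is of type $(1,0)$. Thus $d^cr(Z)=0$. It remains to show $\theta$ is nowhere vanishing, i.e., that $\ker (d^cr)_{|T_pM}=H_p(M)$ (the inclusion $\supseteq$ is what we just verified, and equality then follows by a dimension count since $H_p(M)$ has real codimension one in $T_pM$). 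For a vector $v\in T_pM$, decompose $v=v^{1,0}+v^{0,1}$ in $\C T_pX$; a direct computation using $\overline{\partial r(v^{1,0})}=\dbar r(v^{0,1})$ gives
\begin{equation*}
d^cr(v)=-2\Im\bigl(\partial r(v^{1,0})\bigr),\qquad dr(v)=2\Re\bigl(\partial r(v^{1,0})\bigr).
\end{equation*}
Since $v\in T_pM$ forces $dr(v)=0$, the condition $d^cr(v)=0$ is equivalent to $\partial r(v^{1,0})=0$, i.e.\ $v^{1,0}r=0$. Because every smooth function vanishing on $M$ near $p$ has the form $rg$ for some smooth $g$, this in turn is equivalent to $v^{1,0}\in \C T_pM\cap T^{1,0}_pX=T^{1,0}_pM$, hence to $v\in H_p(M)$.

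For the "only if" direction, fix any defining function $r_0$ for $M$ on some neighborhood $U$, and set $\theta_0:=j^*d^cr_0$, which is a pseudo-Hermitian structure by the previous step. Since the line bundle $E$ is of real rank one and both $\theta,\theta_0$ are global nowhere vanishing sections, there exists $g\in C^\infty(M,\R)$ with $g$ nowhere zero and $\theta=g\theta_0$. Extend $g$ to a smooth nowhere vanishing function $\tilde g$ on a (possibly smaller) neighborhood of $M$ in $U$, and define $r:=\tilde g\, r_0$. Since $\tilde g$ does not vanish, $\{r=0\}=M$ and $dr_{|M}=\tilde g\,dr_0{}_{|M}\neq 0$, so $r$ is a defining function. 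Computing
\begin{equation*}
d^cr=\tilde g\,d^cr_0+r_0\,d^c\tilde g
\end{equation*}
and pulling back to $M$ (where $r_0\equiv 0$) gives $j^*d^cr=g\,\theta_0=\theta$, as desired.

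The only nontrivial point is the dimension argument in the "if" direction: once $d^c r(v)=0$ has been rewritten as $\partial r(v^{1,0})=0$, the identification $v^{1,0}\in T^{1,0}_pM$ is what forces the kernel to coincide with $H_p(M)$ and hence $\theta$ to be nowhere vanishing. Everything else is a direct computation or a standard partition-of-unity extension.
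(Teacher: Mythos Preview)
Your proof is correct and follows essentially the same strategy as the paper's: show that $\ker(j^*d^cr)=H(M)$ for the ``if'' direction, then for the converse write $\theta=g\theta_0$ and multiply a fixed defining function by a nowhere-vanishing extension of $g$. The only cosmetic difference is that the paper identifies $\ker d^cr_p$ as $J(\ker dr_p)=JT_pM$ directly via the complex structure $J$, whereas you reach the same conclusion through the $(1,0)/(0,1)$ decomposition of a real tangent vector; these are equivalent computations.
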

\begin{proof}It is easy to see that
$$H(M)=j^*(T_pM\cap JT_pM)$$
as the latter is the maximal complex subspace of $T_pM$.

Consider a neighborhood $U$ of $M$ and a smooth defning function $r:U\to\R$.

If $J$ is the complex structure of $X$, we have that $\ker dr_p=T_pM$ for $p\in M$ and $\ker d^cr_p=\ker (dr\circ J)_p=J(\ker dr_p)=JT_pM$; therefore,
$$\ker j^*d^cr_p=j^*(\ker d^cr_p\cap\ker dr_p)=j^*(T_pM\cap JT_pM)=H(M)_p\;.$$
Hence $\theta=j^*d^cr\in\Theta(M,T^{1,0}M)$.

On the other hand, any other $\theta'\in\Theta(M,T^{1,0})$ can be written as $\theta'=g\theta$ with $g\in\mathcal{C}^\infty(M)$ never vanishing; there exist a neighborhood $U'$ of $M$ and a function $f:U'\to \R$ such that $e^f$ extends $g$ from $M$ to $U'$; the function $e^fr$  is again a defining function for $M$ on $U'$, as its zero set in $U'$ is again $M$ and $d(e^fr)_p=r(p)d(e^f)_p+e^{f(p)}dr_p=e^{f(p)}dr_p\neq 0$ for $p\in M$.

We have $d^ce^fr=rd^ce^f+e^fd^cr$, so
$$j^*d^c(e^fr)=j^*(e^fd^cr)=gj^*d^cr=g\theta=\theta'\;.$$
Therefore, every pseudo-Hermitian structure is obtained from a defining function and vice versa.
\end{proof}

We denote by $\mathrm{Def}(M)$ the set of defining functions for $M$; the previous proposition implies that
$$\Theta(M,T^{1,0}M)=\{d^cr\ :\ r\in\mathrm{Def}(M)\}\;.$$

The following is an easy consequence of the last proposition, once we observe that $dd^c=2i\partial\dbar$.

\begin{cor}
The CR manifold $M$ is pseudoconvex if and only if there exists a defining function $r:U\to\R$ such that $\partial\dbar r$ gives a semidefinite Hermitian form on $T^{1,0}M$.
\end{cor}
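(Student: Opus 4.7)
The plan is to reduce the corollary to a direct computation using the previous proposition, which identifies pseudo-Hermitian structures with defining functions via $\theta=j^*d^cr$, together with the classical identity $dd^c=2i\partial\dbar$.

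First, starting from any defining function $r:U\to\R$ for $M$, I would set $\theta:=j^*d^cr$, which the preceding proposition guarantees is a pseudo-Hermitian structure on $(M,T^{1,0}M)$. Taking exterior derivatives and using $dd^c=2i\partial\dbar$, one obtains
\[
d\theta \;=\; j^*(dd^cr) \;=\; 2i\,j^*(\partial\dbar r).
\]
For any sections $Z,W$ of $T^{1,0}M$, let $\widetilde{Z},\widetilde{W}$ be local extensions to $(1,0)$-vector fields in a neighborhood of $M$ in $X$. Then
\[
\lambda_\theta(Z,\overline{W}) \;=\; \frac{1}{2i}\,d\theta(Z,\overline{W}) \;=\; (\partial\dbar r)(\widetilde{Z},\overline{\widetilde{W}})\big|_M.
\]
Because $\partial\dbar r$ is a $(1,1)$-form, the right-hand side depends only on the pointwise values $Z_p,W_p\in T^{1,0}_pM\subseteq T^{1,0}_pX$, so the formula is independent of the chosen extensions and exhibits $\lambda_\theta$ as the restriction of the Hermitian form $\partial\dbar r$ to $T^{1,0}M$.

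With this identity, both implications of the corollary follow immediately. If $M$ is pseudoconvex, fix $\theta\in\Theta(M,T^{1,0}M)$ with $\lambda_\theta$ semidefinite; by the previous proposition write $\theta=j^*d^cr$ for some $r\in\mathrm{Def}(M)$, and the identity shows that $\partial\dbar r$ is semidefinite on $T^{1,0}M$. Conversely, given $r\in\mathrm{Def}(M)$ with $\partial\dbar r$ semidefinite on $T^{1,0}M$, the one-form $\theta:=j^*d^cr$ is a pseudo-Hermitian structure whose Levi form is semidefinite by the same formula, so by definition $(M,T^{1,0}M)$ is pseudoconvex. (The possibility of a sign flip when passing between different choices of $\theta$ is harmless in view of equation \eqref{eq_cambiotheta}, since semidefiniteness is preserved under multiplication by a nowhere vanishing real function.)

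There is no genuine obstacle: the only content is the identity $dd^c=2i\partial\dbar$, a standard consequence of $d=\partial+\dbar$, together with the type-$(1,1)$ observation that makes the restriction to $T^{1,0}M$ well defined independently of extensions. The corollary is essentially a translation of the definition of pseudoconvexity into the language of defining functions made available by the preceding proposition.
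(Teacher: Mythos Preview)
Your proposal is correct and follows exactly the approach the paper indicates: the paper does not even spell out a proof, simply remarking that the corollary is ``an easy consequence of the last proposition, once we observe that $dd^c=2i\partial\dbar$.'' Your write-up makes this explicit by computing $d\theta=j^*(dd^cr)=2i\,j^*(\partial\dbar r)$ and identifying $\lambda_\theta$ with the restriction of $\partial\dbar r$ to $T^{1,0}M$.
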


The equivalence relation $r\sim r'$ if and only if $r=e^fr'$ (up to shrikning the open neighborhood of $M$ on which the functions are defined) partitions $\mathrm{Def}(M)$ in two equivalence classes, by orientability; when $M$ is pseudoconvex, we denote by $\mathrm{Def}_+(M)$ the equivalence class containing a (and hence every) defining function such that $\partial\dbar r$ is positive semidefinite on $T^{1,0}M$. Then
\begin{equation}\label{theta_def_fct}\Theta_+(M,T^{1,0}M)=\{d^cr\ :\ r\in\mathrm{Def}_+(M)\}\;.\end{equation}

Consider, in a complex manifold $X$, a precompact, smoothly bounded domain $\Omega\subseteq X$; its boundary $M=b\Omega$ is a smooth hypersurface in $X$, hence it has a natural CR structure given by $T^{1,0}M$.

We define $\mathrm{Def}(\Omega)$ as the set of defining functions for $\Omega$, i.e. smooth functions $r:U\to\R$ such that
\begin{enumerate}
    \item $U$ is an open set of $X$ with $\Omega\Subset U$
    \item $\Omega=\{x\in U\ :\ r(x)<0\}$
    \item $dr_p\neq 0$ for every $p\in b\Omega$.
\end{enumerate}
The domain $\Omega$ is \emph{pseudoconvex} if $(M, T^{1,0}M)$ is pseudoconvex and $\mathrm{Def}(\Omega)\subseteq \mathrm{Def}_+(M)$, i.e. if for one (and hence for every) defining function $r$ for $\Omega$ the Levi form $\partial\dbar r$ is positive semidefinite on $T^{1,0}M$.

The proof of the following result is essentially contained in \cite{Catlin_global}, even if the statement contained there is a global one. We will make use of Theorem 3 in \cite{Catlin_global}, whose proof is presented in \cite{Catlin_invariants}.

\begin{prp}Let $\Omega\subseteq X$ be a precompact, smoothly bounded, pseudoconvex domain and $M=b\Omega$; if $p\in M$ is of finite type, then there exists a neighborhood $U$ of $p\in M$ such that $U\cap M$ is weakly regular.\end{prp}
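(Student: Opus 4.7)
The plan is to construct the weakly regular stratification by iteratively intersecting with the Levi null distribution, invoking Catlin's Theorem 3 of \cite{Catlin_global} (whose proof appears in \cite{Catlin_invariants}) to ensure the process terminates. The finite-type hypothesis is what turns the a priori transfinite iteration into a finite one.

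First, I would fix a relatively compact neighborhood $U$ of $p$ (to be shrunk if necessary) and define $\Sigma_0 := U \cap M$. Inductively, given a closed subset $\Sigma_i \subseteq U \cap M$, set
\[
\Sigma_{i+1} := \{q \in \Sigma_i \colon \ \C T_q \Sigma_i \cap \levinull_q \neq \{0\}\}.
\]
Since the tangent distribution $\C T\Sigma_i$ (part a) of Proposition \ref{tangent_prp}) and $\levinull$ are both closed complex distributions, their intersection is a closed distribution, and Proposition \ref{upper_semicontinuous_prp} guarantees that its support $\Sigma_{i+1}$ is closed. By construction, $\Sigma_i\setminus \Sigma_{i+1}$ has zero holomorphic dimension at each of its points, so the nested sequence $\Sigma_0 \supset \Sigma_1\supset \cdots$ satisfies conditions (1) and (2) of Definition \ref{weakly_reg_dec_dfn} as far as it is nontrivial.

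The main step is to prove that, after shrinking $U$ around $p$, one has $\Sigma_N=\emptyset$ for some finite $N$. Here I would invoke the finite-type hypothesis as follows. By upper semicontinuity of the D'Angelo type at a finite-type point, $U$ can be chosen so that every point of $U\cap M$ is of finite type, with type uniformly bounded. Catlin's Theorem 3 then produces a finite sequence of real embedded submanifolds $V_0\supset V_1\supset\cdots \supset V_N=\emptyset$ of $U\cap M$ such that, for each $q\in V_i\setminus V_{i+1}$, no non-zero element of $\levinull_q$ is tangent to $V_i$. I would then prove by induction on $i$ that $\Sigma_i \subseteq V_i$: the inductive step uses part c) of Proposition \ref{tangent_prp}, since any submanifold locally containing $\Sigma_i$ at $q$ has tangent space containing $T_q\Sigma_i$, so combining with the defining property of $V_i$ gives that points of $\Sigma_i\setminus V_{i+1}$ satisfy $\C T_q\Sigma_i\cap \levinull_q=\{0\}$ and are thus absent from $\Sigma_{i+1}$.

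The main obstacle, as I see it, is precisely the comparison between our intrinsic stratification (defined via $C^\infty$ Zariski tangent spaces) and Catlin's extrinsic one (constructed from ranks of iterated commutators of the Levi form). The delicate point is to identify Catlin's submanifolds $V_i$ with objects that contain the $\Sigma_i$'s in the sense needed for the inductive inclusion $\Sigma_i\subseteq V_i$; once this is set up, termination at $V_N=\emptyset$ is immediate from finite type, and $\Sigma_N=\emptyset$ follows. Everything else (closedness of strata, zero holomorphic dimension of the differences) is a direct consequence of the definitions and of Propositions \ref{upper_semicontinuous_prp} and \ref{tangent_prp} already established.
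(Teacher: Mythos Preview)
Your proposal contains a genuine misstatement about what Catlin's Theorem 3 delivers, and this is precisely the ``main obstacle'' you flag but do not resolve. Catlin does \emph{not} produce a nested sequence of embedded submanifolds $V_0\supset V_1\supset\cdots\supset V_N=\emptyset$. What Theorem 3 gives is: (i) the multitype $\mathfrak{M}(\cdot)$ is upper semicontinuous and, near a finite-type point, takes only finitely many values $M_0<\cdots<M_{N-1}$; (ii) each \emph{level set} $\{\mathfrak{M}=M_k\}$ is locally contained in a submanifold of holomorphic dimension zero. The submanifolds attached to different levels are not nested. What \emph{is} nested is the family of closed \emph{superlevel sets} $\Sigma_k^{\mathrm{Cat}}:=\{\mathfrak{M}\geq M_k\}$, and these are not submanifolds in general. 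Your inductive comparison $\Sigma_i\subseteq V_i$ therefore cannot be run against submanifolds as you state it.

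The fix, and the comparison with the paper's argument, is this. Your induction survives verbatim if you replace the $V_i$ by the closed sets $\Sigma_i^{\mathrm{Cat}}$: from $\Sigma_i\subseteq\Sigma_i^{\mathrm{Cat}}$ one gets $T_q\Sigma_i\subseteq T_q\Sigma_i^{\mathrm{Cat}}$ directly from the definition of the Zariski tangent space, and since $\Sigma_i^{\mathrm{Cat}}\setminus\Sigma_{i+1}^{\mathrm{Cat}}$ has zero holomorphic dimension (by Catlin's (ii) and Proposition~\ref{tangent_prp}~c)), the step goes through. But notice that this repaired argument first establishes that $\{\Sigma_k^{\mathrm{Cat}}\}$ is itself a weakly regular stratification ending at $\emptyset$, and then uses it to bound your intrinsic one. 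The paper simply takes $\{\Sigma_k^{\mathrm{Cat}}\}$ as \emph{the} stratification and stops there: semicontinuity of $\mathfrak{M}$ gives closedness and nesting, finiteness of values gives termination, and Theorem~3(3) gives zero holomorphic dimension of the strata differences. Your intrinsic iteration is correct and pleasant, but it is an extra layer that is not needed for the stated proposition.
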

\begin{proof}Let $\mathfrak{M}(\cdot)$ be the Catlin multitype, as defined in \cite{Catlin_global}. By \cite[Theorem 3 (2) and (5)]{Catlin_global}, if $p$ is of finite type $\tau(p)$, then there exists a neighborhood $U$ of $p$ such that $\mathfrak{M}(q)\leq \mathfrak{M}(p)$ for all $q\in U\cap M$.
By the Remark following the statement of Theorem 3 in \cite{Catlin_global}, as $\mathfrak{M}(q)$ is bounded by $\tau(p)$, it can only assume a finite number of values; the number of such values depends only on the bound on $\mathfrak{M}(q)$. So, let $M_0<M_1<\ldots < M_{N-1}$ be the possible multitypes for $q\in U$ and set
$$\Sigma_k=\{q\in M\cap U\ :\ \mathfrak{M}(q)\geq M_k\}$$
By the semicontinuity (\!\cite[Theorem 3 (2)]{Catlin_global}) of $\mathfrak{M}$, the sets $\Sigma_k$ are closed in $U\cap M$ and by \cite[Theorem 3(3)]{Catlin_global} each $\Sigma_k\setminus \Sigma_{k+1}$ is contained, up to shrinking $U$, in a submanifold $V_k$ of $U$, with holomorphic dimension $0$, i.e. such that
$$\C T_qV_k\cap\levinull_q=\{0\}\;,\ \ \forall\; q\in V_k\;.$$
Therefore, $M\cap U$ is weakly regular.
\end{proof}

Finally, in the context of hypersurfaces in complex manifolds, Lemma \ref{horizontal_lem} can be reformulated naturally as follows, once we notice that the tangent distribution to a complex analytic set is given pointwise on $M$ by complex subspaces of $T_pX$.

\begin{lem}\label{analytic_lem}Let $M$ be a real smooth hypersurface in a complex manifold $X$, with its natural structure of CR manifold; if $M$ contains a complex analytic set $Z$ (possibly with boundary), then $T^{1,0}Z\subseteq \core(\levinull)$.
\end{lem}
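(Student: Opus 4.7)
The plan is to invoke Lemma~\ref{ovvio_lem}: it suffices to exhibit a perfect closed subdistribution of $\mathcal{N}$ that contains $T^{1,0}Z$. The natural candidate is the distribution $\mathcal{E}$ defined by $\mathcal{E}_p := \C T_pZ \cap T^{1,0}_pM$, where $T_pZ$ is the smooth Zariski tangent space of Definition~\ref{tangent_distribution}. By the observation emphasized in the paragraph preceding the statement, $T_pZ$ is $J$-invariant, so $T_pZ \subseteq H_p(M)$, and $\mathcal{E}_p$ corresponds fiberwise to $T_pZ$ under the isomorphism $H(M) \to T^{1,0}M$, $X \mapsto \tfrac{1}{2}(X - iJX)$; this is the intended meaning of $T^{1,0}Z$. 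Closedness of $\mathcal{E}$ follows from Proposition~\ref{tangent_prp}(a) together with smoothness of the subbundle $T^{1,0}M$.

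I would next check that $\mathcal{E}$ is perfect, by direct analogy with the proof of Proposition~\ref{horizontal_lem}: after discarding the isolated points of $Z$, which carry trivial fibers and are therefore irrelevant, Proposition~\ref{tangent_prp}(a) yields $S_\mathcal{E} = Z$, and then $\mathcal{E}_p \subseteq \C T_pZ = \C T_pS_\mathcal{E}$ gives $\mathcal{E}' = \mathcal{E}$. At smooth points of $Z$, the inclusion $\mathcal{E} \subseteq \mathcal{N}$ is immediate: Proposition~\ref{horizontal_lem}, applied to the complex submanifold $Z^{\mathrm{reg}} \subseteq M$, yields precisely this.

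The main obstacle is to extend the inclusion $\mathcal{E} \subseteq \mathcal{N}$ to singular points $p \in Z$. A pure density/closedness argument will not suffice, because the smooth Zariski tangent space at a singular point can be strictly larger than the closure of tangent spaces at nearby regular points; take for instance the cusp $Z = \{z_1^3 = z_2^2\}$, whose smooth Zariski tangent space at the origin is the entire ambient complex tangent space, whereas the tangent cone is one-complex-dimensional. Instead, I would compute $\lambda_\theta(W_p, \overline{W}_p) = (\partial\dbar r)_p(W_p, \overline{W}_p)$ directly, where $r$ is a local defining function of $M$. Since $r$ is smooth and vanishes on the complex analytic set $Z$, a Malgrange--Mather type division theorem provides, locally near $p$, a decomposition $r = \sum_j a_j h_j + \sum_j \tilde{a}_j \bar{h}_j$ with smooth coefficients $a_j, \tilde{a}_j$, where $h_1, \ldots, h_k$ are local holomorphic generators of the ideal sheaf of $Z$ at $p$. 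A short computation expanding $\partial\dbar r$, in which one uses $h_j(p) = 0$, the (anti)holomorphy of $h_j$ (resp.\ $\bar{h}_j$), and the vanishings $W_p(h_j) = \overline{W}_p(\bar{h}_j) = 0$ --- the latter holding because $\mathcal{E}_p$ lies in the holomorphic Zariski tangent space of $Z$ at $p$ --- shows that every surviving term in $(\partial\dbar r)_p(W_p, \overline{W}_p)$ vanishes. This yields $W_p \in \mathcal{N}_p$ for all $W_p \in \mathcal{E}_p$, and Lemma~\ref{ovvio_lem} closes the argument.
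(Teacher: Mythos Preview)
Your overall strategy coincides with the paper's: the paper treats this lemma as a one-line consequence of Proposition~\ref{horizontal_lem}, simply remarking that the tangent distribution of a complex analytic set consists of complex subspaces. Your reduction to Lemma~\ref{ovvio_lem} via the distribution $\mathcal{E}_p=\C T_pZ\cap T^{1,0}_pM$, and your verification of perfectness, are exactly in this spirit; in fact you are more careful than the paper, which does not discuss singular points at all.

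There is, however, a genuine gap in your treatment of singular points. The decomposition $r=\sum_j a_j h_j+\sum_j \tilde a_j\bar h_j$ is not available in general: the ideal of smooth (or real-analytic) germs vanishing on a complex analytic set $Z$ is typically \emph{strictly larger} than the ideal generated by $I_Z$ and $\overline{I_Z}$. For a concrete obstruction, take $Z=\{z_1z_2=0,\ z_3=0\}\subset\C^3$; then $z_1\bar z_2$ vanishes on $Z$, but a Taylor-coefficient comparison shows it does not lie in the ideal generated by $z_1z_2$, $z_3$ and their conjugates. The defining function $r=\Re z_3+z_1\bar z_2+\bar z_1 z_2$ of a smooth hypersurface through $Z$ therefore has no decomposition of the form you claim, and indeed $\partial\bar\partial r(W,\bar W)$ is nonzero for $W=\partial_{z_1}+\partial_{z_2}\in\mathcal{E}_0$. (This particular $M$ is not pseudoconvex, so it is not a counterexample to the lemma itself, but it does show your division step fails.) What Malgrange actually gives is that $r$ lies in the $C^\infty$-ideal generated by the \emph{real-analytic} ideal of $Z$, whose generators include mixed terms like $h_j\bar h_k$; these contribute nonvanishing $\partial\bar\partial$-terms at $p$, and your computation no longer closes. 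A correct argument at singular points must either exploit pseudoconvexity (implicit in the paper's setting and used in the proof of Proposition~\ref{horizontal_lem}) together with the identity of smooth and holomorphic Zariski tangent spaces for complex analytic sets, or carry out a finer analysis of those mixed terms.
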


In the last part of this section we show how the CR structures introduced in Section \ref{CRexamples_sec} can be realized as hypersurfaces inside a complex manifold of dimension $2$.

Indeed, consider $X=Y\times\C$, let $z$ be a local variable on $Y$ and $w$ a variable on $\C$ and choose a function $h:Y\to \R/2\pi\Z$; then the function
$$r(z,w)=\Im(we^{-ih(z)})$$
defines a hypersurface $M\subseteq Y\times \C$ which is an embedded realization for type II CR-structures; if we set
$$r(z,w)=|w-e^{ih(z)}|^2-1$$
we obtain the hypersurface of type III. Finally, if
$$r(z,w)=|w|^2-|e^{iH(z)}|^2$$
with $H:Y\to\C$, then the zero set of $r$ is CR-diffeomorphic to a hypersurface of type I.

\medskip

By slightly altering the defining functions, we can add a smooth "cap" to our hypersufaces, making them closed without boundary, and then  obtain relatively compact pseudoconvex domains whose boundary is, on some open set, CR-diffeomorphic to one of the examples. For instance, in the case of type III CR-structures, we set
$$r(z,w)=|w-e^{ih(z)}|^2-(1-\eta(z))$$
for a suitable function $\eta:Y\to[0,+\infty)$. However, particular care should be put into choosing the function $\eta$, if we want to obtain a pseudoconvex domain.

For example, for type III domains, we need the plurisubharmonicity of
$$|w|^2-2\Re(we^{-ih(z)})+\eta(z)\;.$$
As we are working locally, we can multiply everything by $e^{u(z)}$ where $u(z)$ is the harmonic conjugate of $h(z)$:
$$|w|^2e^{u(z)}-\Re(we^{f(z)})+e^{u(z)}\eta(z)\;,$$
where $f(z)=u(z)-ih(z)$ is holomorphic, so that $e^{u(z)}=|e^{f(z)}|$ is clearly plurisubharmonic.

Therefore, we need to make sure that the last term is plurisubharmonic as well; a convenient choice is to set $\eta(z)=\eta_1(h(z))$, with $\eta_1$ convex and increasing. Computations easily show that then $e^{u(z)}\eta(z)$ is plurisubharmonic. We notice that, with such a choice, in order for $\eta$ to be $<1$ on a compact subset of $Y$, it is vital that $Y$ is of dimension $1$.

The well known worm domains introduced by Diederich and Fornaess are instances of this construction, where $Y$ is an annulus and $h(z)=\log|z|^2$.

\medskip

Finally, suppose that $Y$ is an open Riemann surface that can be properly embedded in $\C^2$; it is well known that there exists a tubular neighborhood of $Y\subset\C^2$ which is biholomorphic to a neighborhood of $Y\times\{0\}$ in $Y\times\C$. By rescaling, we can then realize the capped versions of the hypersurfaces of type I and III as boundaries of bounded domains in $\C^2$.

In the type II case, one can construct a bounded domain in $\C^2$ whose boundary contains an open set which is CR-diffeomorphic to a neighborhood (in $M$) of an open set of $Y$. As long as we are careful in adding the caps, the resulting boundary will be again pseudoconvex, if the original hypersurface was.

\subsection{Local maximum sets}\label{local_max_sec}

Let $X$ be a complex manifold, and let $M\subset X$ be a real hypersurface. We proved that the support of the Levi core of $M$ must contain any complex subvariety of $M$. In this section, we show that it also contains every local maximum set contained in $M$. These sets are a generalization of complex analytic sets in terms of the maximum principle for plurisubharmonic functions and, if $M$ is the boundary of a pseudoconvex domain, they coincide with the complement of the weak Jensen boundary. They are also linked with the existence of positive currents of bidimension $(1,1)$ (see for instance \cite{Ohsawa_Sibony_boundedpsh, Sibony_levipb, Bianchi_Mongodi_levicurr}).

\medskip

We will say that a set $K$ is \emph{locally closed} in $X$ if for every $x\in K$ there is a neighborhood $U$ of $x$ in $X$ such that $\overline{U}\cap K$ is closed in $X$.

\begin{dfn}Let $X$ be a complex manifold and let $K \subset X$ be locally closed. We say that $K$ is a \textbf{local maximum set} if
 every $x\in K$ has a relatively compact neighbourhood $U$
such that $\overline{U}$ is contained in a coordinate neighborhood of $X$, $K\cap\overline{U}$ is closed  and for every
function $\psi$ which is plurisubharmonic in a neighbourhood of $\overline{U}$, we have
\[
\max_{\overline{U}\cap K}\psi=  \max_{bU \cap K} \psi
\]
where $bU=\overline{U}\setminus U$.\end{dfn}

For some properties of local maximum sets, see \cite{slodkowski_locmaxprop}. We will use the following result, which is Proposition 2.3, iv) of that paper. 

\begin{prp}\label{slodko_prp}
Let $K$ be a locally closed subset of a complex manifold $X$. Then $K$ is not a local maximum set if and only if there is a $\mathcal{C}^\infty$ strictly plurisubharmonic function $\phi$ on some open set $W$ such that $\phi\vert_{K\cap W}$ has a strict maximum at some point of $K\cap W$.
\end{prp}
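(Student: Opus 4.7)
The plan is to prove both implications directly, with the substantive work concentrated in upgrading a bare plurisubharmonic function to a smooth strictly plurisubharmonic one with a pointwise strict maximum. The easy direction ($\Leftarrow$) is immediate: given $\phi\in\mathcal{C}^\infty$ strictly plurisubharmonic on $W$ with $\phi|_{K\cap W}$ attaining a strict maximum at $x_0$, pick any relatively compact coordinate neighborhood $U$ of $x_0$ with $\overline{U}\subset W$ and $\overline{U}\cap K$ closed. Strict maximality yields $\phi(x_0)>\phi(y)$ for every $y\in\overline{U}\cap K\setminus\{x_0\}$, hence $\max_{\overline{U}\cap K}\phi=\phi(x_0)>\max_{bU\cap K}\phi$, contradicting the defining property of a local maximum set.

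For the forward direction, assume $K$ fails the local maximum property at some $x\in K$: there exist a relatively compact neighborhood $U$ as in the definition and a plurisubharmonic function $\psi$ defined on an open neighborhood of $\overline{U}$ with
\[
m:=\max_{\overline{U}\cap K}\psi \;>\; \max_{bU\cap K}\psi.
\]
Fix $\eta>0$ with $\max_{bU\cap K}\psi<m-\eta$. By upper semicontinuity of $\psi$, there is an open neighborhood $\mathcal{V}$ of $bU\cap K$ on which $\psi<m-\eta$. Working in the ambient chart around $x$, let $\psi_\varepsilon$ denote the convolution of $\psi$ with a standard mollifier. Then $\psi_\varepsilon$ is $\mathcal{C}^\infty$ plurisubharmonic on a set exhausting $U$ as $\varepsilon\to 0^+$, satisfies $\psi_\varepsilon(y)\le\sup_{B(y,\varepsilon)}\psi$, and majorizes $\psi$ pointwise by subharmonicity. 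For $\varepsilon$ small enough, $B(y,\varepsilon)\subset\mathcal{V}$ for every $y\in bU\cap K$, so $\psi_\varepsilon<m-\eta$ on $bU\cap K$, while $\psi_\varepsilon\ge m$ at any maximizer of $\psi$ on $\overline{U}\cap K$. Adding $\delta\|z\|^2$ for small $\delta>0$ preserves the strict inequality and upgrades $\psi_\varepsilon$ to a $\mathcal{C}^\infty$ strictly plurisubharmonic function $\widetilde{\psi}$ on a neighborhood of $\overline{U}$.

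It remains to promote the interior-versus-boundary inequality to a pointwise strict maximum. Let $x_0\in\overline{U}\cap K$ be a point where $\widetilde{\psi}$ attains its maximum; the strict inequality forces $x_0\in U$. The key observation is that $\partial\dbar\widetilde{\psi}\ge\delta\,I$ in the chart, so one may subtract a small quadratic bump without destroying strict plurisubharmonicity. Setting
\[
\phi(z):=\widetilde{\psi}(z)-\tfrac{\delta}{2}\|z-x_0\|^2,
\]
one obtains a $\mathcal{C}^\infty$ strictly plurisubharmonic function on an open neighborhood $W$ of $x_0$ that agrees with $\widetilde{\psi}$ at $x_0$ and is strictly smaller elsewhere; hence $\phi|_{K\cap W}$ has a strict maximum at $x_0$. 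The main delicate point is the smoothing step, where one must ensure that the strict gap between interior and boundary maxima survives mollification of a merely upper semicontinuous psh function; this is achieved by exploiting the buffer $\eta$ together with the majorization $\psi_\varepsilon\ge\psi$, and the subsequent strict-plurisubharmonicity reserve $\delta$ is precisely what allows the final quadratic localization.
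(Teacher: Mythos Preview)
The paper does not supply its own proof of this proposition: it is quoted verbatim as Proposition~2.3,\,iv) of Slodkowski's paper \cite{slodkowski_locmaxprop}, so there is no proof in the text against which to compare yours.

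That said, your argument for the forward implication (from ``$K$ is not a local maximum set'' to the existence of a smooth strictly plurisubharmonic peak) is correct and complete: the mollification step preserves the strict gap $\eta$ because $\psi_\varepsilon\ge\psi$ at interior maximizers while $\psi_\varepsilon\le\sup_{B(\cdot,\varepsilon)}\psi$ near $bU\cap K$, and the final subtraction of $\tfrac{\delta}{2}\|z-x_0\|^2$ is exactly the right use of the strict-plurisubharmonicity reserve.

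Your backward implication, however, has a genuine quantifier gap. The definition of local maximum set asserts that \emph{every} $x\in K$ admits \emph{some} good neighbourhood $U$; its negation therefore requires exhibiting a point $x_0$ for which \emph{no} admissible neighbourhood satisfies the maximum principle. You only show that the maximum principle fails for those $U$ with $\overline{U}\subset W$, where $W$ is the domain of $\phi$. This does not rule out the existence of a larger good neighbourhood $U_0\not\subset W$ on which $\phi$ is not even defined. Closing this gap requires an additional argument---either an extension of $\phi$ to a plurisubharmonic function on a neighbourhood of $\overline{U_0}$ (which is not immediate, since $\phi$ may be large on $W\setminus K$ and the naive $\max(\phi,c)$ gluing need not match across $bW$), or an appeal to one of the other equivalent characterisations in Slodkowski's Proposition~2.3 (e.g., that the maximum principle in fact holds for a neighbourhood \emph{basis} once it holds for one $U$). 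This is precisely the kind of technical point that Slodkowski's full proposition handles.
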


Local maximum sets are, in many ways, generalizations of complex analytic sets; their presence influences the behaviour of plurisubharmonic functions around them. The next result generalizes, in this sense, Lemma \ref{analytic_lem}.

\begin{thm} Let $\Omega$ be a precompact, smoothly bounded, pseudoconvex domain in a complex manifold $X$ and consider $M=b\Omega$ as a pseudoconvex CR manifold; if $K\subset M$ is a local maximum set, then $K\subseteq S_{\core(\levinull)}$.\end{thm}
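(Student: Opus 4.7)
The plan is to exhibit a closed perfect complex distribution $\mathcal{E}\subseteq\levinull$ whose support contains $K$; by Lemma \ref{ovvio_lem}, this forces $\mathcal{E}\subseteq\core(\levinull)$, hence $K\subseteq S_{\core(\levinull)}$. The natural candidate is
\[\mathcal{E}_p:=\levinull_p\cap \C T_pK\qquad (p\in M),\]
closed as the intersection of the two closed distributions $\levinull$ (Example \ref{ex_kernel}) and $\C TK$ (Proposition \ref{tangent_prp}(a)). Granted the nonvanishing $\mathcal{E}_p\neq\{0\}$ for every $p\in K$---the key analytic fact, addressed below---perfectness is automatic: the inclusion $K\subseteq S_\mathcal{E}$ gives $\C T_pK\subseteq \C T_pS_\mathcal{E}$ by the monotonicity of the tangent distribution (an immediate consequence of Definition \ref{tangent_distribution}), so $\mathcal{E}_p\subseteq \C T_pK\subseteq \C T_pS_\mathcal{E}$ and therefore $\mathcal{E}'_p=\mathcal{E}_p$ for every $p\in M$.

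The substantive content is thus the nonvanishing $\levinull_p\cap\C T_pK\neq\{0\}$ for $p\in K$, where the local-maximum-set hypothesis enters through Proposition \ref{slodko_prp}. Arguing by contradiction, suppose this intersection is trivial at some $p\in K$. By Proposition \ref{tangent_prp}(c), $K$ is locally contained near $p$ in a smooth embedded submanifold $V\subseteq M$ with $T_pV=T_pK$; the contradictory assumption then reads as the Levi form $\partial\dbar r$ (for a pseudoconvex defining function $r$ of $\Omega$) being positive definite on
\[T^{1,0}_pV:=\C T_pV\cap T^{1,0}_pM,\]
the totally real case $T^{1,0}_pV=\{0\}$ being subsumed vacuously. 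I would then produce a strictly plurisubharmonic function $\phi$ on a neighborhood of $p$ in $X$ whose restriction to $V$ attains a strict maximum at $p$: since $K\subseteq V$ locally, Proposition \ref{slodko_prp} would yield the sought contradiction.

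For the construction, fix local holomorphic coordinates centered at $p$ and let $r=f_1,f_2,\ldots,f_N$ be real smooth functions in a neighborhood of $p$ in $X$ whose common zero set is $V$ and whose differentials are linearly independent at $p$. Set
\[\phi(z):=-|z-p|^2+C_1\sum_{j=1}^N f_j(z)^2+C_2\,r(z),\]
with positive constants $C_1,C_2$ to be chosen. On $V$ all the $f_j$'s and $r$ vanish, so $\phi|_V(z)=-|z-p|^2$, which is strictly maximized at $p$. The main technical obstacle is to verify strict plurisubharmonicity of $\phi$ near $p$; by continuity it suffices at $p$, where
\[\partial\dbar\phi(p)=-I+2C_1\sum_{j=1}^N \partial f_j(p)\otimes\dbar f_j(p)+C_2\,\partial\dbar r(p).\]
The decisive algebraic observation is that the common kernel of $\{\partial f_j\}_{j=1}^N$ in $T^{1,0}_pX$ is exactly $T^{1,0}_pV$: for $Z=X-iJX\in T^{1,0}_pX$, all $\partial f_j(Z)$ vanish iff $X,JX\in\bigcap_j\ker df_j=T_pV$, i.e., $X\in T_pV\cap JT_pV$. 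On $T^{1,0}_pV$ the term $C_2\partial\dbar r$ overpowers $-I$ once $C_2$ is chosen large (by Levi positivity); on the complement of a small neighborhood of $T^{1,0}_pV$ in the unit sphere of $T^{1,0}_pX$, $\sum|\partial f_j|^2$ is bounded below by a positive constant by compactness, so choosing $C_1$ large enough closes the argument---having tacitly picked a defining function $r$ in Levi normal form at $p$ so that $\partial\dbar r(p)\geq 0$ holds on all of $T^{1,0}_pX$.
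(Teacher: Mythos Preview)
Your proof is correct and follows the same strategic outline as the paper: you work with the same distribution $\mathcal{E}=\levinull\cap\C TK$, reduce to showing $\mathcal{E}_p\neq\{0\}$ for all $p\in K$, and derive a contradiction via Proposition \ref{slodko_prp} by constructing a strictly plurisubharmonic function peaking on $K$ at $p$.

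The difference lies in how the peak function $\phi$ is built. The paper performs an explicit decomposition $T_pK=H\oplus L\oplus F$ (totally real, complex Levi-positive, transversal), chooses adapted holomorphic coordinates, passes to a strictly convex slice $\Omega'$, parametrizes $V$ over $T_pV$, and writes $\phi$ as an explicit quadratic in these coordinates. Your construction $\phi=-|z-p|^2+C_1\sum f_j^2+C_2 r$, with $f_1=r,\ldots,f_N$ cutting out $V$, is considerably more direct: the identification of $\bigcap_j\ker\partial f_j(p)$ with $T^{1,0}_pV$ replaces the coordinate geometry, and the two-region positivity argument (near vs.\ away from $T^{1,0}_pV$ in the unit sphere) replaces the estimates coming from convexity of the slice. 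Your approach is cleaner and more invariant; the paper's approach makes the geometric r\^{o}le of the pieces $H$, $L$, $F$ more visible but at the cost of lengthier bookkeeping.

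One small point worth making explicit: the existence of a defining function with $\partial\dbar r(p)\geq 0$ on all of $T^{1,0}_pX$ (which you invoke ``tacitly'') does hold at any single boundary point of a pseudoconvex domain---replacing $r$ by $hr$ with suitable first-order data of $h$ at $p$ kills the off-diagonal tangential/normal block of the complex Hessian---but since this is precisely what makes your two-constant argument close, a one-line justification would strengthen the write-up.
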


\begin{proof}
Consider the distribution $\mathcal{E}:=\C TK\cap\levinull$. We will show that its support contains $K$. Since this immediately implies that $\mathcal{E}$ is perfect, the conclusion follows from Lemma \ref{ovvio_lem}.

\smallskip

We prove our claim by contradiction, assuming that $x\in K$ and $\C T_xK\cap \levinull_x=\{0\}$, that is, $T_xK\cap JT_xK\cap\levinull^\R_x=\{0\}$. From this, one can write
$$T_xK=H\oplus L\oplus F,$$
where:
\begin{itemize}
    \item $L$ is a complex subspace of $H(M)_x$ such that the restriction of $dd^cr$ to $\C L_x\cap T^{1,0}M$ is positive definite, where $r\in\mathrm{Def}(\Omega)$, 
    \item $H$ is a totally real subspace of $H(M)_x$, i.e. $JH\cap H=\{0\}$, 
    \item $F$ is either $\{0\}$ or a $1$-dimensional complement of $H(M)_x$ in $T_xM$, depending on whether $T_xK\subseteq H(M)_x$ or not.
\end{itemize}

We fix a neighborhood $U$ of $x$ where we have holomorphic coordinates $z_j=x_j+iy_j$ $(j=1,\ldots, n)$ such that (for appropriate integers $k,\ell\geq 0$): 
\begin{itemize}
\item $x$ is the origin, 
\item $H$ is generated by $\left\{\partial_{x_1},\ldots, \partial_{x_k}\right\}, $
\item $\C L_x\cap T_x^{1,0}M$ is generated (over $\C$) by $\left\{\partial_{z_{k+1}},\ldots, \partial_{z_{k+\ell}}\right\}$ 
\item $\partial_{y_n}$ is an inward transversal vector field to $b\Omega$ at $x=0$, and $F$ is contained in the span of $\partial_{x_n}$ (this is possible because $K\subseteq M$ and hence $T_xK\subseteq T_xb\Omega$).
\end{itemize}
From now on, we identify points of $U$ with their coordinates (e.g., $x=0$). It is also useful to group the coordinates as follows:\[
z=(z',z'',z''',z_n),
\]
\[
z'=(z_1,\ldots, z_k)\in \C^k, \quad z''=(z_{k+1},\ldots, z_{k+\ell})\in \C^\ell, 
\]
\[
z'''=(z_{k+\ell+1},\ldots, z_{n-1})\in \C^{n-k-\ell-1}, z_n\in \C.
\]

\medskip

As $T_0K=H\oplus L\oplus F$, by Proposition \ref{tangent_prp}, up to shrinking $U$, we have a manifold $V\subseteq U$ of real dimension $(k+2\ell)$ (if $F=\{0\}$) or $k+2\ell+1$ (if $F$ is $1$-dimensional) such that $T_0V=T_0K$ and $K\subseteq V$; by applying the Proposition directly to the manifold $b\Omega$, we can suppose that $V\subseteq b\Omega$.

The slice
$$U':=\{z\in U\ :\ z'=z'''=0\}$$
is an open set of $\C^{\ell+1}_{z'', z_n}$, and $\Omega':=U'\cap\Omega$ is a pseudoconvex domain in $\C^{\ell+1}$. With respect to the variables $(z'', z_n)$, the domain $\Omega'$ is given by the condition
$$r'(z'',z_n)=r(0,z'',0, z_n)<0,$$
so $dd^cr'$ restricted to the maximal complex subspace of $T_0b\Omega'\subseteq T_0\C^{\ell+1}$ can be identified with $dd^cr$ restricted to $L$. Therefore, $\Omega'$ is strictly pseudoconvex in $0$. Without loss of generality (that is, for an appropriate choice of the coordinate system), we may assume that $\Omega'$ is strictly convex at $0$. Notice that $V\cap U'\subseteq b\Omega\cap U'=b\Omega'$.

\medskip

Shrinking, if needed, $U$ again, we now parametrize $V$ with $T_0V$ (identified with a linear subspace of $\C^n$); we treat the case where $\dim F=1$, the other being analogous. For $z\in U$, recall that $H$ is given by $\Im(z')=z''=z'''=z_n=0$ and $U'$ by $z'=z''=0$; the parametrization of $V$ over $T_0V$ will then be of the form
$$\Im(z')=f'(\Re(z'),z'',x_n),\qquad z'''=f''(\Re(z'),z'',x_n),\qquad y_n=f'''(\Re(z'),z'',x_n)$$
where (for some $C_1>0$) \begin{equation}\label{bigO}
	|f'|,\ |f''|,\ |f'''|\leq C_1(\|\Re(z')\|^2+\|z''\|^2+x_n^2).\end{equation} By the strict convexity of $b\Omega'$,  one easily gets that
$$f'''(\Re(z'),z'',x_n)=q(z'',x_n)+s(\Re(z'))+b(\Re(z');(z'',x_n))+\textrm{h.o.t.},$$
where:\begin{itemize}
	\item $q$ is homogeneous of degree $2$ and positive definite as a real quadratic form on the real vector space $\C^\ell\times \R$,
	\item $s$ is homogeneous of degree $2$,
	\item $b$ is a bilinear map $\R^k\otimes\left(\C^{\ell}\times \R\right)\rightarrow \R$.
	\end{itemize} 
Hence, there is a constant $C_2>0$ such that, for every $\eta>0$,
$$|b(\Re(z');(z'',x_n))|\leq C_2\|\Re(z')\|\|(z'',x_n)\|\leq \frac{C_2}{2}\left(\frac{1}{\eta}\|\Re(z')\|^2+\eta\|(z'',x_n)\|^2\right)$$
therefore, if $C_2\eta/2$ is strictly less than the smallest eigenvalue of $q$, for appropriate constants $a, C_3>0$ (and shrinking once more $U$) we have
\begin{equation}\label{f'''}f'''(\Re(z'),z'',x_n)\geq a\|(z'',x_n)\|-C_3\|\Re(z')\|^2.
\end{equation}

Consider the function
$$\phi(z)=2C_3(\|\Im(z')\|^2-\|\Re(z')\|^2)-y_n+\frac{C_3}{2}\|\Re(z')\|^2+\|\Im(z')\|^2+\frac{a}{2}(\|z''\|^2+x_n^2)+\|z'''\|^2+y_n^2\;.$$
Notice that $\phi$ is strictly plurisubharmonic, because both $\|\Im(z')\|^2-\|\Re(z')\|^2$ and $y_n$ are pluriharmonic. By \eqref{bigO} and \eqref{f'''}, for every $z\in V$ we have (for another constant $C_4>0$, and shrinking $U$ one last time) \begin{eqnarray*}
\phi(z)&\leq& -\frac{a}{2}(\|z''\|^2+x_n^2)-\frac{C_3}{2}\|\Re(z')\|^2+C_4(\|\Re(z')\|^2+\|z''\|^2+x_n^2)^2\\
\\ &\leq& -\frac{a}{4}(\|z''\|^2+x_n^2)-\frac{C_3}{4}\|\Re(z')\|^2, 
\end{eqnarray*} and in particular $\phi(0)=0$ and $\phi(z)<0$ for $V\setminus\{0\}$. Since $K\subseteq V$, by Proposition \ref{slodko_prp} $K$ is not a local maximum set, which is the desired contradiction. The proof is complete.
\end{proof}

\section{D'Angelo forms and D'Angelo class}\label{d'angelo_sec}

In this section we discuss D'Angelo forms and the D'Angelo class. We review the basic theory first (in Section \ref{d'angelo_basic_sec}), and then we introduce the norm-like quantities $\norm_K(\mathcal{D})$, in terms of which our "reduction to the core" theorem is formulated. Sections \ref{de_rham_sec} and \ref{normlike_sec} prepare the ground for the definition of $\norm_K(\mathcal{D})$, discussing a similar notion in the context of complex manifolds and recalling a few basic properties of the Cauchy--Riemann complex on abstract CR manifolds, respectively. 

\subsection{Review of the basic theory}\label{d'angelo_basic_sec}

Let $(M,T^{1,0}M)$ be a connected and orientable CR manifold of hypersurface type, and fix a pseudo-Hermitian structure $\theta\in\Theta(M,T^{1,0}M)$. By orientability, there is a global smooth real vector field $T$ such that $\theta(T)\equiv 1$. A vector field with this property will be said to be \textbf{$\theta$-normalized}. If $T$ is $\theta$-normalized, any other $\theta$-normalized vector field is obtained adding to $T$ an appropriate smooth section of the maximal complex distribution $H(M)$. The choice of a $\theta$-normalized $T$ yields a direct sum decomposition \begin{equation*}
\C TM=T^{1,0}M\oplus T^{0,1}M\oplus \C T.
\end{equation*}
We denote by $\pi_{1,0}$ (resp. $\pi_{0,1}$) the projection onto the first (resp. second) factor. The projection onto the third factor is given by $X\mapsto \theta(X)T$, and $\pi_{1,0}(\overline{X})=\overline{\pi_{0,1}(X)}$.

The next proposition establishes the elementary theory of D'Angelo forms, collecting various known facts.

\begin{prp_dfn}\label{d'angelo_prp} Given a pseudo-Hermitian structure $\theta$ and a $\theta$-normalized vector field $T$, we define, for a smooth complex vector field $X$, \begin{equation}
\alpha_{\theta,T}(X):=\theta([T,X-\theta(X)T]). \label{d'angelo_def}
\end{equation}
The expression \eqref{d'angelo_def} is $C^\infty$-linear in $X$, and $\alpha_{\theta,T}(X)$ is real if $X$ is a real vector field. Hence, it defines a real one-form on $M$. Any such form $\alpha_{\theta, T}$ will be called a \textbf{D'Angelo form} of $(M,T^{1,0}M)$. The following properties hold:\begin{itemize}
    \item[i)] $\alpha_{\theta, T}(T)=0$. Thus, we may write $\alpha:=\alpha_{\theta, T}=\alpha_{1,0}+\alpha_{0,1}$, where $\alpha_{1,0}:=\alpha\circ\pi_{1,0}$, $\alpha_{0,1}:=\alpha\circ\pi_{0,1}$, and $\alpha_{0,1}=\overline{\alpha_{1,0}}$.
    \item[ii)] 	If $X$ is a section of $T^{1,0}M\oplus T^{0,1}M$, then
	\begin{equation*}
	\alpha_{\theta, T}(X)=-\mathcal{L}_T\theta(X),
	\end{equation*}
	where $\mathcal{L}$ is the Lie derivative.
    \item[iii)] The restriction of $\alpha_{\theta, T}$ to $\levinull\oplus \overline{\levinull}$ is independent of the choice of $T$ ($\levinull\oplus \overline{\levinull}$ is the complex distribution whose fiber at $p$ is $\levinull_p\oplus \overline{\levinull_p}$). Hence, we may safely denote it by $\alpha_\theta:\levinull\oplus \overline{\levinull}\rightarrow \C$.
    \item[iv)] If $\theta'=\pm e^f\theta$ is another pseudo-Hermitian structure, where $f\in C^\infty(M,\R)$, then
	\begin{equation*}
	\alpha_{\theta'}=\alpha_{\theta}+ df\quad \text{on } \levinull\oplus\overline\levinull.
	\end{equation*}
	\item[v)] Suppose that $(M,T^{1,0}M)$ is pseudoconvex. If $X_p,Y_p\in \levinull_p\oplus \overline{\levinull_p}$, then $d\alpha_{\theta, T}(X,Y)_{|p}=0$.
	\item[vi)] Let $X$ be a complex manifold, and let $M\subset X$ be a real hypersurface. Let $r\in \mathrm{Def}(M)$, and let $N$ be a $(1,0)$-vector field defined in a neighborhood of $M$ such that $Nr\equiv1$. If $\theta:=d^cr$ and $T:=\frac{1}{2i}(N-\overline{N})$, then
	\begin{equation*}
	\alpha_{\theta, T}(Z)=\overline\partial r([\overline{N}, Z])=2\partial \overline\partial r(Z,\overline{N}) \qquad \forall Z\in T^{1,0}M.
	\end{equation*}
	In view of \eqref{theta_def_fct}, the expressions above exhaust all possible restrictions of D'Angelo forms to the Levi-null distribution.
\end{itemize}
The set of all D'Angelo forms will be called the \textbf{D'Angelo class} of $(M,T^{1,0}M)$ and denoted by $\mathcal{A}_{(M,T^{1,0}M)}$.
\end{prp_dfn}

\begin{proof}\
i) The vanishing of $\alpha_{\theta, T}(T)$ is trivial, and the rest follows from the reality of $\alpha$.

ii) By Leibnitz rule for the Lie derivative plus the identity $\mathcal{L}_TX=[T,X]$, we get
	\begin{equation*}
	\mathcal{L}_T\theta (X)=T(\theta(X))-\theta([T,X]).
	\end{equation*}
If $X$ is a section of $T^{1,0}M\oplus T^{0,1}M$, then $\theta(X)\equiv 0$ and the RHS equals $-\alpha_{\theta, T}(X)$. If $X=T$, then $\theta(X)\equiv 1$ and the RHS vanishes. By i), we are done.

iii) If $T'$ is another $\theta$-normalized vector field and $X_p\in \levinull_p\oplus\overline{\levinull_p}$ is extended to a section of $T^{1,0}M\oplus T^{0,1}M$, then \begin{equation*}
	\alpha_{\theta,T'}(X)=\alpha_{\theta,T}(X)+\theta([T'-T,X]).
	\end{equation*}
Since $T'-T$ is a section of $T^{1,0}M\oplus T^{0,1}M$, the second term vanishes at $p$.
	
iv) Choose a $\theta$-normalized vector field $T$. Then $T':=\pm e^{-f}T$ is $\theta'$-normalized. Extending $X_p\in\levinull_p\oplus\overline{\levinull_p}$ to a section of $T^{1,0}M\oplus T^{0,1}M$ as above, we have $\theta(X)=\theta'(X)\equiv0$ and
\begin{align*}
\alpha_{\theta', T'}(X)&=\theta'([T', X])=\pm e^{f}\theta([\pm e^{-f} T, X])\\
&=\pm e^{f}\theta(\pm e^{-f}[T,X]\mp X(e^{-f})T)\\
&=\alpha_{\theta, T}(X)+Xf.
\end{align*}
In view of iii), this is what we wanted.

v) Extend $X_p, Y_p\in \levinull_p\oplus \overline{\levinull_p}$ to sections of $T^{1,0}M\oplus T^{0,1}M$. Since exterior differentiation and Lie derivative commute, by ii) we get \begin{eqnarray*}
		d\alpha_{\theta, T}(X,Y)&=&-(\mathcal{L}_Td\theta)(X,Y)\\
		&=&-T\left(d\theta(X,Y)\right)+d\theta([T,X],Y)+d\theta(X,[T,Y])\\
		&=&-T\left(d\theta(X,Y)\right)+d\theta([T,X],Y)-d\theta([T,Y],X).
	\end{eqnarray*}
Since $Y_p\in \levinull_p\oplus \overline{\levinull_p}$, $d\theta(Z,Y)_{|p}=0$ for every section $Z$ of $T^{1,0}M\oplus T^{0,1}M$. Hence,
\[d\theta([T,X],Y)_{|p} = \theta([T,X])_{|p} d\theta(T,Y)_{|p} =-\frac{1}{2}\theta([T,X])_{|p} \theta([T,Y])_{|p} ,\]
where we used Cartan's identity. Thus, $d\theta([T,X],Y)_{|p}-d\theta([T,Y],X)_{|p}=0$. \newline
To complete the proof, we need to show that $T\left(d\theta(X,Y)\right)_{|p}=0$. Since $d\theta(X,Y)=d\theta(\pi_{1,0}X,\pi_{0,1}Y)+d\theta(\pi_{0,1}X,\pi_{1,0}Y)$, we may assume for simplicity that $X$ and $Y$ are sections of $T^{1,0}M$ such that $X_p, Y_p\in \levinull_p$, and prove that $T\left(d\theta(X,\overline{Y})\right)_{|p}=0$. In fact, by the standard polarization identity, we may also assume that $X=Y$. By pseudoconvexity, $\frac{1}{2i}d\theta(X,\overline{X})=\lambda_\theta(X,\overline{X})$ is nonnegative, and it vanishes at $p$. The conclusion follows immediately.

vi) Notice that the vector field $T$ is real, tangent to $M$ and satisfies $\theta(T)=1$, where $\theta=d^cr$. Since $[T, Z]$ is also tangent to $M$, $dr([T, Z])=0$ and \begin{eqnarray*}
		\alpha_{\theta,T}(Z)&=&i(\partial r-\dbar r)([T, Z])\\
		&=&-2i\dbar r([T, Z])\\
		&=&\overline\partial r([\overline{N}, Z]),
	\end{eqnarray*} because $[N,Z]$ has type $(1,0)$. Cartan's formula now yields \begin{eqnarray*}
		\overline\partial r([\overline{N}, Z])&=&-2d\overline\partial r(\overline{N}, Z)+\overline{N}(\overline\partial r(Z))-Z(\overline\partial r(\overline{N}))\\
		&=&-2\partial \overline\partial r(\overline{N}, Z)=2\partial \overline\partial r(Z,\overline{N}).
	\end{eqnarray*} \end{proof}

\begin{rmk2}
If $Z$ is a smooth section of $T^{1,0}M$, then by the definition of the Levi form\begin{equation*}
	[Z,\overline{Z}]=\pi_{1,0}[Z,\overline{Z}]-\overline{\pi_{1,0}[Z,\overline{Z}]}-4i\lambda_\theta(Z,\overline{Z})T.
	\end{equation*}
	D'Angelo observes in \cite{DAngelo_iterated} that this gives the identity \begin{align*}
	\lambda_\theta(Z,\overline{[Z,\overline{Z}]})&=\frac{1}{4i}\theta([Z,[Z,\overline{Z}]])\\
	&=\lambda_\theta(Z,\overline{\pi_{1,0}[Z,\overline{Z}]})-Z\left(\lambda_\theta(Z,\overline{Z})\right)+\lambda_\theta(Z,\overline{Z})\theta([T,Z]),
	\end{align*}
	and he is led to give definition \eqref{d'angelo_def}. In fact, this notion already appears in the earlier \cite{DAngelo_finite_type_real_hypersurfaces}, but, as D'Angelo puts it in its 1987 paper, it had been ``little used'' until that moment. One should say that the r\^{o}le played by the ``D'Angelo form'' in \cite{DAngelo_iterated} is also purely instrumental, and it does not appear in the main result of the paper.
	
	It is in the work of Boas and Straube \cite{Boas_Straube_derham} that D'Angelo forms start to play a prominent r\^{o}le, and that the crucial "closure" property v) is observed. Various references for the parts of the Proposition-Definition above are: \begin{enumerate}
	    \item[ii)] \cite[Prop. 9]{DAngelo_iterated}.
	    \item[iii)] \cite[p. 60]{DAngelo_finite_type_real_hypersurfaces} and \cite[Lemma 2.5]{Adachi_Yum}.
	    \item[iv)] \cite[formula (5.84) at p. 158]{Straube_book}.
	    \item[v)] \cite[p. 230]{Boas_Straube_derham}.
	    \item[vi)] \cite[p. 231]{Boas_Straube_derham} and \cite[identity (5.81) at p. 158]{Straube_book}.
	\end{enumerate}
\end{rmk2}

\begin{rmk}
On a Levi-flat CR manifold, the D'Angelo class gives a well-defined leafwise de Rham cohomology class on the Levi foliation; its interpretation in terms of the differential geometric properties of the foliation is discussed, for example, in \cite{Barletta_Dragomir_Duggal_CRfoliation} at the end of Section 3.6.

In presence of a metric on $M$ or at least on the transverse bundle of the foliation, the leafwise differential forms can be identified with "global" differential forms; for example, in \cite{Mongodi_Tomassini_semihol, Mongodi_Tomassini_onecomplete} a condition involving  the D'Angelo form in the Levi-flat case is needed in order to give an appropriate condition of positivity for a metric on the transverse bundle of the foliation.

Following the same lines of thought, one could also spot an instance of D'Angelo forms in the context of Levi foliations in the paper \cite{Forstneric_LaurentT_stein_compacts}. 
\end{rmk}

Assume now that $(M,T^{1,0}M)$ is pseudoconvex. Parts iv) and v) of the Proposition-Definition lead one to think of the D'Angelo class as a sort of cohomology class, that happens to be well-defined only on the distribution $\levinull\oplus \overline\levinull$. In fact, if $\iota:V\hookrightarrow M$ is an embedded submanifold such that $\C T_pV\subseteq \levinull_p\oplus \overline\levinull_p$ for every $p\in M$, then the restricted D'Angelo form $\iota^*\alpha_{\theta, T}$ is a real closed one-form on $V$ independent of $T$, whose associated cohomology class $[\iota^*\alpha_{\theta, T}]\in H^1_{\mathrm{dR}}(V,\R)$ is independent of the choice of pseudo-Hermitian structure (cf. \cite{Boas_Straube_derham}). This happens in particular when $T_pV$ is a complex (i.e., $J_b$-invariant) subspace of $H_p(M)$ for every $p\in M$: in this case $\C T_pV = \C T_pV\cap T^{1,0}M\oplus \C T_pV\cap T^{0,1}M\subseteq \levinull_p\oplus \overline\levinull_p$ (cf. Proposition \ref{horizontal_lem}). A standard application of the Newlander--Nirenberg theorem actually shows that the almost complex structure $(V, J_b)$ is integrable, i.e., it is induced by a structure of complex manifold on $V$.

Our next goal is to attach to any sub-distribution $\mathcal{D}$ of the Levi distribution certain quantities $\norm_K(\mathcal{D})$, defined in terms of the D'Angelo class $\mathcal{A}_{(M,T^{1,0}M)}$ and depending on a parameter $K\in (0,+\infty]$. If $\mathcal{D}$ is the tangent distribution $\C TV$ to a complex submanifold as in the preceding paragraph, $\norm_\infty(\mathcal{D})$ coincides with a certain norm-like function $\norm_V:H^1_{\textrm{dR}}(V,\R)\rightarrow [0,+\infty]$, evaluated at $[\iota^*\alpha_{\theta, T}]$. In the next section, we define $\norm_V$ as a motivating preliminary to the definition of $\norm(\mathcal{D})$, which is given in Section \ref{normlike2_sec}.

\subsection{A norm-like function on the first de Rham cohomology of a complex manifold}\label{de_rham_sec}

Let $X$ be a complex manifold. All the one-forms are assumed to be smooth.

\begin{prp}
If $\alpha$ is a real closed one-form on $X$, then $\dbar \alpha_{1,0}$ induces a Hermitian form on $T^{1,0}X$, that is, \begin{equation*}
\overline{\dbar \alpha_{1,0}(L,\overline{M})}=\dbar \alpha_{1,0}(M,\overline{L})\qquad\forall L,M\in T^{1,0}X.
	\end{equation*}
	\end{prp}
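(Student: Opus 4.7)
The plan is to exploit the reality of $\alpha$ together with its closedness. Since $\alpha$ is real, the Hodge decomposition gives $\alpha = \alpha_{1,0} + \alpha_{0,1}$ with $\alpha_{0,1} = \overline{\alpha_{1,0}}$. The condition $d\alpha = 0$ expands, via $d = \partial + \dbar$, into a sum of a $(2,0)$-, a $(1,1)$-, and a $(0,2)$-form, each of which must vanish separately. The $(1,1)$-component of this identity reads
\[
\dbar \alpha_{1,0} + \partial \alpha_{0,1} = 0.
\]

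Next I would use the fact that complex conjugation exchanges $\partial$ and $\dbar$ to compute
\[
\overline{\dbar \alpha_{1,0}} = \partial \overline{\alpha_{1,0}} = \partial \alpha_{0,1},
\]
and combine with the previous identity to deduce the ``purely imaginary'' property
\[
\overline{\dbar \alpha_{1,0}} = -\dbar \alpha_{1,0}.
\]

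Finally, I would translate this into the desired Hermitian symmetry by evaluation. For any complex-valued $2$-form $\omega$ and any tangent vectors $X, Y$ one has $\overline{\omega(X,Y)} = \overline{\omega}(\overline{X}, \overline{Y})$; applying this with $\omega = \dbar \alpha_{1,0}$, $X = L$, $Y = \overline{M}$, and using antisymmetry of $2$-forms plus the identity $\overline{\dbar \alpha_{1,0}} = -\dbar \alpha_{1,0}$, one obtains
\[
\overline{\dbar \alpha_{1,0}(L, \overline{M})} = \overline{\dbar \alpha_{1,0}}(\overline{L}, M) = -\overline{\dbar \alpha_{1,0}}(M, \overline{L}) = \dbar \alpha_{1,0}(M, \overline{L}),
\]
which is exactly the Hermitian symmetry on $T^{1,0}X$.

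There is no real obstacle here: the argument is essentially bookkeeping of bidegrees and signs. The only point that requires a little care is keeping track of the effect of complex conjugation on the arguments of a $(1,1)$-form together with the antisymmetry in the two slots, so that the final sign comes out correctly.
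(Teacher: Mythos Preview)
Your proof is correct and follows essentially the same approach as the paper: use reality of $\alpha$ to get $\alpha_{0,1}=\overline{\alpha_{1,0}}$, use closedness to extract $\dbar\alpha_{1,0}=-\partial\alpha_{0,1}$ from the $(1,1)$-component of $d\alpha=0$, and then track conjugation and antisymmetry through the evaluation. The paper's version is slightly terser, compressing your last two displays into a single chain, but the content is identical.
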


\begin{proof}
Since $\alpha$ is real, $\alpha_{0,1}=\overline{\alpha_{1,0}}$. Since $d\alpha=0$, $\dbar\alpha_{1,0}=-\partial\alpha_{0,1}$. Thus, \begin{equation*}
\overline{\dbar \alpha_{1,0}(L,\overline{M})}=\partial \alpha_{0, 1}(\overline{L},M)=-\dbar\alpha_{1,0}(\overline{L},M)=\dbar\alpha_{1,0}(M,\overline{L}).\end{equation*}\end{proof}

\begin{dfn}\label{quasi_norm_def}
Let $X$ be a complex manifold and $\alpha$ a real closed one-form. Set \begin{equation*}
\norm_{X,0}(\alpha):=\inf\{\lambda \colon \ \lambda> 0\quad \&\quad  \alpha_{1,0}\wedge\alpha_{0,1}\leq\lambda \dbar \alpha_{1,0}\},
\end{equation*}
where the inequality is in the sense of Hermitian forms on $T^{1,0}X$ and $\norm_0(\alpha)=+\infty$ if the set above is empty. If $[\alpha]\in H^1_{\mathrm{dR}}(X,\R)$, then we put \begin{eqnarray*}
\norm_X([\alpha])&:=&\inf\{\norm_{X,0}(\beta)\colon\ [\alpha]=[\beta]\}\\
&=&\inf\{\norm_{X,0}(\alpha+df)\colon\ f\in C^\infty(X,\R)\}.
\end{eqnarray*}
\end{dfn}	

Notice that the two functions $\norm_0$ and $\norm$ stand in the same relation as a norm on a vector space and the induced "quotient norm" on a quotient of that space. The next result shows that $\norm$ satisfies weaker versions of the norm axioms.

\begin{thm}\label{norm_thm}
Let $X$ be a complex manifold and write $\norm=\norm_X$ for simplicity. Then $\norm:H^1_{\textrm{dR}}(X,\R)\rightarrow [0,+\infty]$ satisfies the following properties:
\begin{itemize}
\item[i)] non-degeneracy: $\norm([\alpha])=0$ if and only if $[\alpha]=0$.
\item[ii)] homogeneity: $\norm(t[\alpha])=t\norm([\alpha])$ for every $t\geq 0$ and $[\alpha]\in H^1_{\textrm{dR}}(X,\R)$.
\item[iii)] quasi-triangle inequality: $\norm([\alpha]+[\beta])\leq 2\max\{\norm([\alpha]),\norm([\beta])\}$ for every $[\alpha], [\beta]\in H^1_{\textrm{dR}}(X,\R)$.
\end{itemize}
If $Y$ is another complex manifold and $F:Y\rightarrow X$ is a holomorphic mapping, then the map induced in cohomology $F^*:H^1_{\textrm{dR}}(X,\R)\rightarrow H^1_{\textrm{dR}}(Y,\R)$ is "norm"-decreasing: \[\norm_Y(F^*[\alpha])\leq \norm_X([\alpha])\qquad\forall [\alpha]\in H^1_{\textrm{dR}}(X,\R).\]
\end{thm}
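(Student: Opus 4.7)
\textbf{Plan for the proof of Theorem~\ref{norm_thm}.} The statement decomposes into four claims: (i) nondegeneracy, (ii) homogeneity, (iii) the quasi-triangle inequality, and (iv) functoriality under pullback. The plan is to dispatch (ii), (iii), (iv) by direct manipulation of the pointwise Hermitian-form inequality defining $\norm_{X,0}$, and to treat (i) by a global argument via plurisubharmonic functions on the universal cover; (i) is the substantive point and the main obstacle.

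For (iv), observe that since $F$ is holomorphic, $F^*$ preserves the $(p,q)$-bigrading and commutes with $\partial$ and $\dbar$. Hence for any representative $\beta$ of $[\alpha]$, $F^*\beta$ represents $F^*[\alpha]$ and satisfies $(F^*\beta)_{1,0}\wedge(F^*\beta)_{0,1}=F^*(\beta_{1,0}\wedge\beta_{0,1})$ and $\dbar(F^*\beta)_{1,0}=F^*\dbar\beta_{1,0}$. A Hermitian inequality on $T^{1,0}X$ pulls back along $dF\colon T^{1,0}Y\to T^{1,0}X$ to the corresponding inequality on $T^{1,0}Y$, so $\norm_{Y,0}(F^*\beta)\leq\norm_{X,0}(\beta)$, and taking infima over representatives yields (iv). For (ii), the scaling $\beta\mapsto t\beta$ transforms $\beta_{1,0}\wedge\beta_{0,1}\leq\lambda\dbar\beta_{1,0}$ into $\beta_{1,0}\wedge\beta_{0,1}\leq(\lambda/t)\dbar\beta_{1,0}$, giving $\norm_{X,0}(t\beta)=t\norm_{X,0}(\beta)$ for $t>0$; since representatives of $t[\alpha]$ are precisely $t(\beta+d(f/t))$ with $\beta$ ranging over representatives of $[\alpha]$, (ii) follows for $t>0$, and $t=0$ is handled by the zero representative. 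For (iii), apply pointwise the elementary inequality $|u+v|^2\leq 2(|u|^2+|v|^2)$ with $u=\beta_{1,0}(L)$, $v=\gamma_{1,0}(L)$, obtaining $(\beta+\gamma)_{1,0}\wedge(\beta+\gamma)_{0,1}\leq 2\beta_{1,0}\wedge\beta_{0,1}+2\gamma_{1,0}\wedge\gamma_{0,1}$; combined with $\norm_0(\beta)\leq a$, $\norm_0(\gamma)\leq b$, and the nonnegativity of $\dbar\beta_{1,0}$ and $\dbar\gamma_{1,0}$, this is bounded above by $2\max(a,b)\,\dbar(\beta+\gamma)_{1,0}$, so $\norm_0(\beta+\gamma)\leq 2\max(a,b)$.

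For (i), the direction $[\alpha]=0\Rightarrow\norm([\alpha])=0$ is immediate by choosing the zero representative (for which the admissible set of $\lambda$'s is $(0,+\infty)$). For the converse, the key local observation I would first establish is a Diederich--Fornaess-style reformulation: on any simply connected open $U\subset X$, where a representative $\beta$ admits a real primitive $\beta=dg$, a direct computation of $\partial\dbar(-e^{g/\lambda})$ shows that the inequality $\beta_{1,0}\wedge\beta_{0,1}\leq\lambda\dbar\beta_{1,0}$ on $T^{1,0}X|_U$ is \emph{equivalent} to $-e^{g/\lambda}$ being plurisubharmonic on $U$. Given a sequence $\beta_n=\alpha+df_n$ with $\norm_0(\beta_n)=\lambda_n\to 0$, passing to the universal cover $\pi\colon\tilde X\to X$ yields global primitives $\tilde g_n$ of $\pi^*\beta_n$ and globally defined plurisubharmonic functions $\tilde\phi_n:=-e^{\tilde g_n/\lambda_n}<0$ on $\tilde X$ which transform under each deck transformation $\sigma$ as $\tilde\phi_n\circ\sigma=e^{\int_\sigma\alpha/\lambda_n}\,\tilde\phi_n$. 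If $[\alpha]\neq 0$, one picks $\sigma_0$ with $c:=\int_{\sigma_0}\alpha\neq 0$, and (replacing $\sigma_0$ by $\sigma_0^{-1}$ if necessary) the rescaling factor $e^{c/\lambda_n}$ diverges to $+\infty$ as $n\to\infty$.

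The main obstacle is to convert this divergence into a contradiction on the possibly non-compact $\tilde X$. The strategy I would pursue is to choose a compact set $K\Subset\tilde X$ and a natural number $N$ so that the iterates $\sigma_0^{-k}(K)$, $k=0,\ldots,N$, are all contained in a fixed relatively compact $K'\Subset \tilde X$; the plurisubharmonic function $\tilde\phi_n$ is then bounded above by $0$ on $K'$, yet satisfies $\tilde\phi_n|_{\sigma_0^{-k}(K)}=e^{-kc/\lambda_n}\,\tilde\phi_n\circ\sigma_0^{-k}|_{K}$, forcing $\tilde\phi_n$ to span $N$ orders of magnitude within $K'$ and eventually violating the sub-mean-value property once $N$ is sufficiently large relative to $\lambda_n$. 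This global analytic step, together with a careful choice of $K$ and $\sigma_0$, is where the real work lies; once it is in place, (i) follows, and all other parts of the theorem reduce to the formal manipulations above.
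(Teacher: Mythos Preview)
Your treatment of (ii), (iii) and (iv) is correct and essentially the same as the paper's.

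For (i), your approach diverges from the paper's and contains a genuine gap. The local observation that, where $\beta=dg$, the inequality $\beta_{1,0}\wedge\beta_{0,1}\le\lambda\,\dbar\beta_{1,0}$ is equivalent to the plurisubharmonicity of $-e^{g/\lambda}$ is correct, as is the automorphy relation $\tilde\phi_n\circ\sigma=e^{\int_\sigma\alpha/\lambda_n}\tilde\phi_n$ on the universal cover. The gap is in the final step: the assertion that $\tilde\phi_n$ must ``eventually violate the sub-mean-value property once $N$ is sufficiently large relative to $\lambda_n$'' is not justified. Negative plurisubharmonic functions on a fixed domain can span arbitrarily many orders of magnitude (e.g.\ smoothings of suitable multiples of a Green function), so the mere factor $e^{Nc/\lambda_n}$ between translates is no contradiction. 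Moreover your compact $K'$ necessarily grows with $N$, since the iterates $\sigma_0^{-k}(K)$ escape every compact set ($\sigma_0$ has infinite order once $\int_{\sigma_0}\alpha\neq 0$), so any mean-value or Harnack constant on $K'$ deteriorates with $N$ and there is nothing uniform to play against. What would be needed is a Phragm\'en--Lindel\"of type growth restriction for negative plurisubharmonic functions adapted to the geometry of $\tilde X$ and the deck group; you do not supply one, and on a general $\tilde X$ it is unclear such a restriction exists.

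The paper proceeds quite differently. It first uses the contraction property (your (iv)), together with real-analytic approximation of a loop carrying nonzero $\alpha$-period and its holomorphic extension, to reduce to the case where $X$ is an annulus. On the annulus it avoids plurisubharmonic functions altogether and argues directly with the forms: an integration by parts against a cutoff $\chi$ gives $\int_{X'} i\,\alpha^{k}_{1,0}\wedge\alpha^{k}_{0,1}\to 0$, so $\alpha+df^k\to 0$ in $L^2$ on a smaller annulus, and a Poincar\'e-inequality plus weak-compactness argument then produces a genuine primitive of $\alpha$. If you want to rescue your strategy, the natural first move is to perform this same reduction to an annulus (so that $\tilde X$ becomes a strip of fixed width), where a Phragm\'en--Lindel\"of bound on the admissible automorphy factor is at least plausible; as written, however, the argument does not close.
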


\begin{proof} Homogeneity is easily verified: if $t,\lambda>0$, then $\alpha_{1,0}\wedge\alpha_{0,1}\leq\lambda \dbar \alpha_{1,0}$ if and only if $(t\alpha_{1,0})\wedge(t\alpha_{0,1})\leq\lambda t \dbar(t \alpha_{1,0})$, and therefore $\norm_{X,0}(t\alpha)=t\norm_{X,0}(\alpha)$, from which the homogeneity of $\norm$ follows immediately. The case $t=0$ is trivial (notice that we are using the convention that $0\cdot\infty=0$).

Let us now prove the quasi-triangle inequality. Since $\gamma_{1,0}\wedge\gamma_{0,1}\geq 0$ for every real one-form $\gamma$, considering $\gamma=\alpha-\beta$ we get \[
\alpha_{1,0}\wedge\beta_{0,1}+\beta_{1,0}\wedge\alpha_{0,1}\leq \alpha_{1,0}\wedge\alpha_{0,1}+\beta_{1,0}\wedge\beta_{0,1}.
\]
Assume that $\alpha_{1,0}\wedge\alpha_{0,1}\leq\lambda \dbar \alpha_{1,0}$ and $\beta_{1,0}\wedge\beta_{0,1}\leq\mu \dbar \beta_{1,0}$ for some $\lambda, \mu> 0$, which entails that $\dbar\alpha_{1,0}$ and $\dbar\beta_{1,0}$ are nonnegative. Then\begin{eqnarray*}
&(\alpha_{1,0}+\beta_{1,0})\wedge(\alpha_{0,1}+\beta_{0,1})\leq2\alpha_{1,0}\wedge\alpha_{0,1}+2\beta_{1,0}\wedge\beta_{0,1}\\
\leq& 2\lambda \dbar \alpha_{1,0}+2\mu \dbar \beta_{1,0}
\leq 2\max\{\lambda, \mu\}\dbar (\alpha_{1,0}+\beta_{1,0}).
\end{eqnarray*} Taking the $\inf$ over all allowable values of $\lambda$ and $\mu$, we obtain $\norm_{X,0}(\alpha+\beta)\leq 2\max\{\norm_{X,0}(\alpha), \norm_{X,0}(\beta)\}$. Applying this inequality to $\alpha+df$ and $\beta+dg$ and taking one more $\inf$ over all $f,g\in C^\infty(X,\R)$, we obtain the desired quasi-triangle inequality.

The contraction property is a simple consequence of the facts that, since $F:Y\rightarrow X$ is holomorphic, then pull-back along $F$ preserves the decomposition into $(1,0)$ and $(0,1)$ parts of forms, the wedge operation, the nonnegativity of $(1,1)$-forms, and it commutes with the operator $\dbar$. From these properties one sees that,  if $\alpha_{1,0}\wedge\alpha_{0,1}\leq\lambda \dbar \alpha_{1,0}$ holds for $\lambda>0$ and a real one-form $\alpha$ on $X$, then $(F^*\alpha)_{1,0}\wedge (F^*\alpha)_{0,1}\leq\lambda \dbar (F^*\alpha)_{1,0}$ also holds. The inequality $\norm_{Y,0}(F^*\alpha)\leq \norm_{X,0}(\alpha)$ follows immediately, and this in turn gives the desired inequality.

We are left with the proof of non-degeneracy.
We first establish it on an annulus $X=\{z\in X\colon\ a<|z|<b\}$ ($0<a<b$). The vanishing of $\norm([\alpha])$ is equivalent to the existence of a sequence of smooth real one-forms $\alpha^k$ such that $\alpha^k=\alpha+df^k$ for smooth real-valued $f^k$ and \begin{equation*}
\alpha_{1,0}^k\wedge\alpha_{0,1}^k\leq\frac{1}{k}\dbar \alpha_{1,0}^k.
\end{equation*} Let $\chi$ be a nonnegative test function identically equal to one on a smaller annulus $X':=\{a'<|z|<b'\}$, where $a<a'<b'<b$. Since $d\alpha^k=0$, we have the identities $\dbar\alpha_{1,0}^k=d\alpha_{1,0}^k=-d\alpha_{0,1}^k$. Observing that, if $\gamma$ is a real one-form, then $\int_X i\gamma_{1,0}\wedge\gamma_{0,1}\geq 0$, an integration by parts yields
\begin{eqnarray*}
\int_X i\chi^2 \alpha_{1,0}^k\wedge\alpha_{0,1}^k&\leq&\frac{1}{k}\int_X i\chi^2 \dbar\alpha_{1,0}^k\\
&\leq&\frac{1}{2k} \int_Xi\chi^2 d( \alpha_{1,0}^k-\alpha_{0,1}^k)\\
&=&\frac{1}{k}\int_X i\left(\chi\alpha_{1,0}^k\wedge d\chi+d\chi\wedge \chi \alpha_{0,1}^k\right)\\
&\leq&\frac{1}{k}\int_X i\left(\chi^2\alpha_{1,0}^k\wedge \alpha_{0,1}^k+\partial\chi\wedge\dbar\chi\right).
\end{eqnarray*}
Hence, \begin{equation*}
\int_{X'}i\alpha_{1,0}^k\wedge\alpha_{0,1}^k\leq\int_X\chi^2 i\alpha_{1,0}^k\wedge\alpha_{0,1}^k\leq \frac{1}{k-1}\int_Xi\partial\chi\wedge\dbar\chi.
\end{equation*}
Thus, $\alpha^k=\alpha+df^k$ vanishes in $L^2(X')$ as $k$ tends to $\infty$. In particular, the $df^k$ are bounded in $L^2(X')$ uniformly in $k$ and by Poincar\'e inequality, modulo adding an appropriate real constant to $f^k$, we may assume that the $f^k$ are uniformly bounded in $L^2(X')$. Passing to a subsequence, we may assume that $f^k$ converges weakly, hence in the sense of distributions, to $f\in L^2(X')$. This in turn implies that $\alpha+df^k$ converges in the sense of currents (on $X'$) to $\alpha+df$. Since we already proved that $\alpha+df^k$ vanishes in $L^2(X')$, we necessarily have $\alpha=-df$ on $X'$. Since $\alpha$ is smooth, $f$ must be smooth on $X'$. Thus $\alpha$ is exact on $X'$, which in turn implies exactness of $\alpha$ on the whole of $X$.

Let now $X$ be arbitrary and $\norm([\alpha])=0$. We argue by contradiction, assuming that $[\alpha]\neq0$. By the de Rham isomorphism theorem, there exists a smooth closed path $\gamma_0:\mathbb{S}^1\rightarrow X$ such that $\int_{\gamma_0}\alpha\neq 0$. We now use the fact that any smooth mapping between real analytic manifolds can be uniformly approximated by a real analytic mapping (see \cite{shiga}, Section 1). In particular, there is a real analytic path $\gamma_1$ homotopic to $\gamma_0$. Thus, $\int_{\gamma_1}\alpha\neq 0$. Identifying $\mathbb{S}^1$ with the unit circle in the complex plane, we can analytically continue it to a neighborhood of $\mathbb{S}^1$, obtaining a holomorphic mapping $F:A:=\{1-\delta<|z|<1+\delta\}\rightarrow X$ such that $F_{|\mathbb{S}^1}=\gamma_1$. This immediately implies that $\int_{\mathbb{S}^1}F^*\alpha\neq 0$, i.e., $[F^*\alpha]\in H^1_{\mathrm{dR}}(A)$ is nontrivial. By the first part of our proof $\norm_A([F^*\alpha])>0$, and by the contraction property of $\norm$ along holomorphic mappings, we conclude that $\norm_X([\alpha])>0$, the sought-after contradiction. \end{proof}

On complex manifolds with vanishing $H^{0,1}_{\dbar}X$, there is an alternative description of $\norm$ in terms of pluriharmonic functions. This is based on known facts, which we rapidly recall. Denote by $\PH(X)$ the set of smooth, real-valued and pluriharmonic functions on $X$, that is, \begin{equation*}
\PH(X):=\{h\in C^\infty(X,\R)\colon dd^ch=0 \},
\end{equation*} where as usual $d^c=i(\dbar-\partial)$. We have the linear mapping \begin{eqnarray*}
\PH(X) &\longrightarrow & H^1_{\mathrm{dR}}(X)\\
h &\longmapsto & [d^c h]
\end{eqnarray*}
If $h=\Re(H)$, with $H$ holomorphic, then \begin{equation*}
d^ch=i(\dbar-\partial)\left(\frac{H+\overline{H}}{2}\right)=\frac{1}{2i}(dH-d\overline{H})=d(\Im(H)),
\end{equation*}
and therefore $[d^ch]=0$ and the mapping above descends to the quotient \begin{equation}\label{PH_deRham}
\PH(X)/\Re(\OO(X)) \rightarrow  H^1_{\mathrm{dR}}(X).
\end{equation}
This mapping is always injective. In fact, $d^ch=df$, with $f$ and $h$ real-valued, happens if and only if $h+if$ is holomorphic, that is, only if $h\in\Re(\OO(X))$.

\begin{lem}\label{hodge_theory_lem}
Assume that $H^{0,1}_{\dbar}X=0$. Then the mapping \eqref{PH_deRham} is an isomorphism.
\end{lem}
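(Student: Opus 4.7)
The injectivity of \eqref{PH_deRham} is already established in the paragraph preceding the lemma, so I only need to prove surjectivity. Given a class in $H^1_{\mathrm{dR}}(X,\R)$ represented by a real closed one-form $\alpha$, the plan is to produce a pluriharmonic $h$ with $d^c h$ cohomologous to $\alpha$. I would decompose $\alpha = \alpha_{1,0} + \alpha_{0,1}$ with $\alpha_{0,1} = \overline{\alpha_{1,0}}$, note that the $(0,2)$-component of $d\alpha=0$ gives $\dbar \alpha_{0,1} = 0$, and use the assumption $H^{0,1}_{\dbar} X = 0$ to obtain a smooth complex function $G$ with $\dbar G = \alpha_{0,1}$. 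Writing $G = f + ih$ with $f, h$ real, my candidate will be $h := \Im G$.

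The main step is to show that $h$ is automatically pluriharmonic. First, I would observe that $\dbar \alpha_{1,0}$ is a \emph{purely imaginary} $(1,1)$-form: the $(1,1)$-component of $d\alpha = 0$ gives $\dbar \alpha_{1,0} = -\partial \alpha_{0,1}$, and since $\alpha$ is real, $\overline{\dbar \alpha_{1,0}} = \partial \alpha_{0,1} = -\dbar \alpha_{1,0}$. Applying $\partial$ to $\dbar G = \alpha_{0,1}$ and using $\partial \alpha_{0,1} = -\dbar \alpha_{1,0}$ yields
\[
\partial \dbar f + i\, \partial \dbar h = -\dbar \alpha_{1,0}.
\]
Because $f$ and $h$ are real, both $\partial \dbar f$ and $\partial \dbar h$ are themselves purely imaginary (one has $\overline{\partial \dbar u} = -\partial \dbar u$ whenever $u$ is real). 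Separating real and imaginary parts then forces $\partial \dbar h = 0$, i.e., $h$ is pluriharmonic.

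To conclude, I would verify the cohomological identity by direct computation:
\[
df + d^c h = \partial f + \dbar f + i(\dbar h - \partial h) = \partial(f - ih) + \dbar(f + ih) = \partial \overline{G} + \dbar G = \alpha_{1,0} + \alpha_{0,1} = \alpha,
\]
so in particular $[\alpha] = [d^c h]$, and the map \eqref{PH_deRham} is surjective. The one nontrivial point is the automatic pluriharmonicity of $\Im G$; this is precisely where the reality of $\alpha$ combines with the $(1,1)$-component of its closedness, and is the only part of the argument that is not purely formal.
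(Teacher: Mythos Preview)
Your proof is correct and follows essentially the same route as the paper: solve $\dbar G=\alpha_{0,1}$ using $H^{0,1}_{\dbar}X=0$, take $h=\Im G$, and verify $\alpha=d^c h+d(\Re G)$. Your derivation of the pluriharmonicity of $h$ via the real/imaginary decomposition of the $(1,1)$-form $\partial\dbar G=-\dbar\alpha_{1,0}$ is in fact cleaner than the paper's somewhat terse justification, but the underlying idea is identical.
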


\begin{proof}
Let $\alpha$ be a real closed one-form. Since $\dbar \alpha_{0,1}=0$ and $H^{0,1}_{\dbar}X=0$, there exists a smooth complex valued function $g$ such that $\dbar g=\alpha_{0,1}$. Since $\alpha$ is real, this implies $\alpha_{1,0}=\partial \overline{g}$ and we may write \begin{equation*}
\alpha=\partial \overline{g}+\dbar g = d^c(\Im(g))+d\Re(g).
\end{equation*}
Since $\partial \alpha_{1,0}=0$, $\partial \dbar g=0$ and $\partial \dbar\overline{g}=0$. In particular, $\Im(g)\in \PH(X)$ and the mapping \eqref{PH_deRham} is surjective.
\end{proof}

In particular, any class $[\alpha]$ in $H^1_{\mathrm{dR}}(X)$ is represented by $d^ch$ for some $h\in \PH(X)$ and \begin{eqnarray}
\norm([\alpha])&=&\inf\{\lambda\geq 0\colon \exists f\in C^\infty(X,\R)\text{ s.t. } \partial (f+ih)\wedge\dbar(f-ih)\leq\lambda \dbar \partial f\}\notag\\
&=&\inf\{\lambda\geq 0\colon \exists f\in C^\infty(X,\R)\text{ s.t. } \partial (h+if)\wedge\dbar(h-if)\leq\lambda \partial \dbar f\}\label{norm_pluriharm}
\end{eqnarray}

\subsection{Remarks on the Cauchy--Riemann complex on abstract CR manifolds}\label{normlike_sec}

To define the quantities $\norm_K(\distr)$ on an abstract CR manifold of hypersurface type, we need to consider the Cauchy-Riemann complex on an abstract CR manifold; to keep the exposition self-contained and to avoid a too long detour, we will only recall a few notions and refer the reader to \cite[Chapter 8]{Boggess-CRmanifolds}, or \cite[Section 1.7]{dragomir_tomassini} for a metric-free approach.

We consider a CR manifold $(M,T^{0,1}M)$ of hypersurface type and endow $\C TM$ with a Hermitian metric $h$ such that $T^{0,1}M\perp T^{1,0}M$; we denote by $E$ the orthogonal complement of $T^{0,1}M\oplus T^{1,0}M$. The space of $(p,q)$-forms on $M$ is the space of sections of the bundle
$$\Lambda^{p,q}T^*M=\Lambda^p(T^{1,0}M\oplus E)^*\widehat{{\otimes}}\,\Lambda^q(T^{0,1}M)^*\;.$$
In particular, $(1,0)$-forms are sections of $(T^{1,0}M\oplus E)^*$ and $(0,1)$-forms are sections of $(T^{0,1}M)^*$.

The metric $h$ gives an orthogonal decomposition
$$\Lambda^rT^*M=\Lambda^{0,r}T^*M\oplus\cdots\oplus\Lambda^{r,0}T^*M\;.$$

We denote by $\mathcal{E}^{p,q}_M(U)$ the space of sections of $\Lambda^{p,q}T^*M$ on $U$; we will omit the open set $U$, when it is unimportant.

\begin{rmk}
In general, given a $1$-form $\alpha$ on $M$, we have that $$\alpha=\alpha_{1,0}+\alpha_{0,1}$$
however, if $\alpha$ is real, we cannot say that $\alpha_{1,0}=\overline{\alpha_{0,1}}$. This is only true when we evaluate the two forms on a $(1,0)$ vector-field, i.e. given $X\in \Gamma(M, T^{1,0}M)$,
$$\alpha_{1,0}(X)=\overline{\alpha_{0,1}}(X)\;.$$
It is true that $\overline{\alpha_{0,1}}\in\mathcal{E}^{0,1}_M$.
\end{rmk}

The operator
$$\dbar:\mathcal{E}^{p,q}_M\to\mathcal{E}^{p,q+1}_M$$
is defined as $\dbar=\pi_{p,q+1}\circ d$, where $\pi_{p,q+1}$ is the orthogonal projection from $\Lambda^{p+q+1}T^*M$ to $\Lambda^{p,q+1}T^*M$.

\begin{rmk}
Given $\alpha$ a $1$-form on $M$ and $X,Y$ sections of $T^{1,0}M$, then
$$\dbar\alpha_{1,0}(X,\overline{Y})=d\alpha_{1,0}(X,\overline{Y})\;.$$
Therefore, by Cartan's identity
$$2\dbar\alpha_{1,0}(X,\overline{Y})=-\overline{Y}(\alpha_{1,0}(X))-\alpha_{1,0}([X,\overline{Y}])\;.$$
Due to the definition of $(1,0)$-forms, in general $\alpha_{1,0}([X,\overline{Y}])\neq \alpha([X,\overline{Y}]_{1,0})$; however, if $X_p,Y_p\in\levinull_p$, then $[X,\overline{Y}]_p\in T^{1,0}M\oplus T^{0,1}M$, so
$$\alpha_{1,0}([X,\overline{Y}])_p= \alpha([X,\overline{Y}]_{1,0})_p\;.$$
\end{rmk}

In view of \cite[Section 8.3, Theorem I]{Boggess-CRmanifolds}, given an embedding $j:M\to X$ of $M$ as a CR hypersurface in a complex manifold $X$, the bundles $\Lambda^{p,q}T^*M$ are isomorphic to certain subbundles of $\Lambda^{p+q}T^*X$ via $j^*$ (described in \cite[Section 8.1]{Boggess-CRmanifolds}), which we will denote by $\Lambda^{p,q}T^*j(M)$.

We note that, given a section $\alpha$ of $\Lambda^{1,0}T^*j(M)\oplus\Lambda^{0,1}T^*j(M)$ and given any extension $\tilde{\alpha}$ to a neighbourhood of $j(M)$ in $X$, $\tilde{\alpha}_{0,1}\vert_{T^{0,1}j(M)}=\alpha_{0,1}$, where in the left hand side we took the $(0,1)$ component with respect to the Dolbeault splitting of $T^*X$.

\medskip

We define a $\dbar_{j(M)}$ on the embedded hypersuface via the relation $j^*\dbar_{J(M)}=\dbar j^*$. In particular, given $\alpha$ a section of $(T^{1,0}X)^*\vert_{j(M)}$ such that $j^*\alpha\in\Lambda^{1,0}T^*M$ and $X,Y$ sections of $T^{1,0}M$, then
$$\dbar(j^*\alpha)(X,\overline{Y})=\dbar r\wedge \dbar\tilde{\alpha}(j_*X, j_*\overline{Y}, \overline{N})$$
where $r$ is a defining function for $j(M)$ in $X$, $N$ is a $(1,0)$ vector-field such that $Nr\equiv 1$ and $\tilde{\alpha}$ is an extension of $\alpha$ to a neighborhood of $j(M)$.

We recall the following elementary variation on Cartan's identity (see also \cite[Lemma 4.4]{Yum_invariance}).

\begin{lem}\label{complex_cartan_lem}
	Let $\alpha$ be a real one-form, and let $Z$ and $W$ be vector fields of type $(1,0)$. We have the identities
	\begin{eqnarray*}
	2\partial \alpha(Z,\overline{W})&=&Z\left(\alpha(\overline{W})\right)-\alpha([Z, \overline{W}]_{0,1}),\\
	2\dbar \alpha(Z,\overline{W})&=&-\overline{W}\left(\alpha(Z)\right)-\alpha([Z, \overline{W}]_{1,0}).
	\end{eqnarray*}
\end{lem}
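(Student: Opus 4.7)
The plan is to apply the Cartan formula
\[
2 d\beta(X,Y) = X(\beta(Y)) - Y(\beta(X)) - \beta([X,Y])
\]
separately to the pure-type components $\beta = \alpha_{1,0}$ and $\beta = \alpha_{0,1}$ of the splitting $\alpha = \alpha_{1,0} + \alpha_{0,1}$, and then to exploit bidegree considerations to identify the resulting expressions with $2\dbar\alpha(Z,\bar W)$ and $2\partial\alpha(Z,\bar W)$ respectively.

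Concretely, for the second identity I would first record the two elementary facts that $\alpha_{1,0}$ vanishes on vectors of type $(0,1)$ and that $\alpha_{1,0}(Z) = \alpha(Z)$ for $Z$ of type $(1,0)$. Applying Cartan's identity to $\beta = \alpha_{1,0}$ on the pair $(Z, \bar W)$ then gives
\[
2 d\alpha_{1,0}(Z, \bar W) = -\bar W\bigl(\alpha(Z)\bigr) - \alpha_{1,0}([Z, \bar W]).
\]
The $2$-form $d\alpha_{1,0}$ has bidegree components in $(2,0), (1,1), (0,2)$; the $(2,0)$-piece (which is $\partial \alpha_{1,0}$) and the $(0,2)$-piece both vanish when evaluated on a mixed pair $(Z, \bar W)$ with $Z \in T^{1,0}$ and $\bar W \in T^{0,1}$, since a $(2,0)$-form needs two arguments of type $(1,0)$ and a $(0,2)$-form needs two of type $(0,1)$. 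Hence $d\alpha_{1,0}(Z, \bar W) = \dbar\alpha_{1,0}(Z, \bar W) = \dbar\alpha(Z, \bar W)$. Finally, since $\alpha_{1,0}$ annihilates the $(0,1)$-component of any vector, one has $\alpha_{1,0}([Z, \bar W]) = \alpha_{1,0}([Z, \bar W]_{1,0}) = \alpha([Z, \bar W]_{1,0})$, which yields the claimed identity.

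The first identity is obtained by the mirror argument applied to $\beta = \alpha_{0,1}$, with the roles of $(1,0)$ and $(0,1)$ interchanged: Cartan gives $2 d\alpha_{0,1}(Z, \bar W) = Z\bigl(\alpha(\bar W)\bigr) - \alpha_{0,1}([Z, \bar W])$; the $(2,0)$ and $(0,2)$ components of $d\alpha_{0,1}$ again die on $(Z, \bar W)$, so that $d\alpha_{0,1}(Z, \bar W) = \partial\alpha_{0,1}(Z, \bar W) = \partial\alpha(Z, \bar W)$; and the commutator term collapses to $\alpha([Z, \bar W]_{0,1})$ because $\alpha_{0,1}$ vanishes on $(1,0)$-vectors. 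I do not foresee a real obstacle in executing the argument: it is essentially a bookkeeping exercise combining Cartan's formula with the bidegree decomposition of $2$-forms, the only delicate point being to confirm that the extraneous bidegree components really annihilate a mixed $(1,0)$-$(0,1)$ pair.
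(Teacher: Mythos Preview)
Your proposal is correct and is precisely the ``elementary variation on Cartan's identity'' that the paper alludes to; the paper does not give its own proof of this lemma, merely stating it with a reference to \cite[Lemma 4.4]{Yum_invariance}. One cosmetic remark: since $\alpha_{1,0}$ has bidegree $(1,0)$, the form $d\alpha_{1,0}$ has no $(0,2)$-component at all, so you need only observe that the $(2,0)$-part $\partial\alpha_{1,0}$ annihilates the mixed pair $(Z,\overline W)$; likewise $d\alpha_{0,1}$ has no $(2,0)$-component.
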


By the definition of wedge product and Cartan's identity,
$$2\dbar r\wedge \dbar\tilde{\alpha}(j_*X, j_*\overline{Y}, \overline{N})=2\dbar\tilde{\alpha}(j_*X, j_*\overline{Y})=-(j_*\overline{Y})(\tilde{\alpha}(j_* X)) - \tilde{\alpha}([j_*X,j_*\overline{Y}]_{0,1})$$
where $[j_*X,j_*\overline{Y}]_{0,1}$ denotes the projection on $T^{0,1}X$.

If $(j_*X)_x, (j_*Y)_x\in\levinull_x$ (i.e. if $X_p, Y_p\in\levinull_p$ with $x=j(p)$), then the projection on $T^{0,1}X$ is the same as the projection on $T^{0,1}j(M)$.

In conclusion, if $X,Y$ are sections of $T^{1,0}M$ such that $X_p, Y_p$ belong to $\levinull_p$, we have a CR embedding $j:M\to X$ and $\alpha$ is a section of $(T^{1,0}X)^*\vert_{j(M)}$ such that $j^*\alpha\in\Lambda^{1,0}T^*M$, then the three expressions
$$\begin{array}{cl}\dbar j^*\alpha(X,\overline{Y})_p&\textrm{(intrinsic $\dbar$ on $M$)}\\
(\dbar r\wedge\dbar \tilde{\alpha})(j_*X, j_*\overline{Y}, \overline{N})_p&\textrm{(extrinsic $\dbar$ on $j(M)$)}\\
\dbar \tilde{\alpha}(j_*X, j_*\overline{Y})_p&\textrm{(restriction of the $\dbar$ from $X$)}\end{array}$$
all coincide, where $\tilde{\alpha}$, $N$, $r$ are as above.

In what follows, given $\alpha$ a $1$-form on $(M, T^{1,0}M)$ we will denote by $\dbar\alpha$ the $(1,1)$-form $\dbar\alpha_{1,0}$, where $\alpha_{1,0}$ is the $(1,0)$-component of $\alpha$; we will be mainly (almost exclusively) interested in the Hermitian form induced by $\dbar\alpha$ on $\levinull$, where all the possible interpretations of the symbol $\dbar$ coincide.

Likewise, given $f\in\mathcal{C}^\infty(M)$, we will denote by $\partial\dbar f$ the $(1,1)$-form $-\dbar (df)_{1,0}$, which again defines a Hermitian form on $\levinull$.

As pointed out before, in the embedded case, we can compute the Hermitian form $\dbar\alpha$ on $\levinull$ by extending $\alpha$ to the ambient complex manifold and then taking the $\dbar$ in the classical sense.

\subsection{Definition of $\norm_K(\mathcal{D})$}\label{normlike2_sec}

We now possess all the necessary tools to define the quantities $\norm_K(\mathcal{D})$ in full generality; we start by studying the Hermitian forms $\partial\alpha$, where $\alpha$ is a D'Angelo form. Let $(M,T^{1,0}M)$ be a pseudoconvex CR manifold of hypersurface type.

\begin{lem}\label{dbar_alpha_lem}[cf. \cite[Lemma 2.6 and Lemma 2.9]{Adachi_Yum}]
	Let $\alpha:=\alpha_{\theta, T}\in \mathcal{A}_{M,T^{1,0}M}$ be a D'Angelo form. Then the Hermitian form $\dbar\alpha$ on $\levinull$ is independent of the choice of $T$. If $\theta, \theta'=e^f\theta\in \Theta_+(M)$, where $f\in C^\infty(M)$, and $\alpha$, $\alpha'$ are the associated D'Angelo forms, then we have \begin{equation}\label{hermitian_form_dependence}
	\dbar\alpha'(Z,\overline{Z})_{|p}= \dbar\alpha(Z,\overline{Z})_{|p}-\partial \dbar f(Z,\overline{Z})_{|p} \qquad\forall Z\textrm{ s.t. }Z_p\in \levinull_p.
	\end{equation}
\end{lem}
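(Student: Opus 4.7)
My plan is to reduce both claims to computations with Lie derivatives, using the fact that for $Z, W$ sections of $T^{1,0}M$ with $Z_p, W_p \in \levinull_p$ the Hermitian form $\dbar\alpha(Z,\overline W)|_p$ agrees (up to a factor of $\tfrac12$) with the intrinsic expression $d\alpha(Z,\overline W)|_p$. This follows because $\dbar\alpha_{1,0}$ is Hermitian whenever $\alpha$ is real (cf.\ the first proposition of Section \ref{de_rham_sec}), so on $T^{1,0}M\otimes T^{0,1}M$ one has $d\alpha = 2\dbar\alpha_{1,0}$; in particular one can work with the globally defined $d\alpha$ and then transport conclusions to $\dbar\alpha$.

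For the transformation formula under $\theta \mapsto \theta'=e^f\theta$, I would fix any $\theta$-normalized $T$, so that $T':=e^{-f}T$ is $\theta'$-normalized. A direct computation based on $[e^{-f}T,X]=e^{-f}[T,X]-X(e^{-f})\,T$ yields the global identity
\[
\alpha_{\theta', T'} \;=\; \alpha_{\theta, T} + df - df(T)\,\theta
\]
of real 1-forms on $M$. Since $\theta$ vanishes on $T^{1,0}M$, restricting to $T^{1,0}M$ gives $(\alpha_{\theta', T'})_{1,0} = (\alpha_{\theta, T})_{1,0} + \partial f$; applying $\dbar$ together with the CR-intrinsic identity $\dbar\partial f = -\partial\dbar f$ (a consequence of $d^2 f = 0$ on the $(1,1)$-component) yields formula \eqref{hermitian_form_dependence}.

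For the independence from $T$, I would write $T'' = T+V$ with $V$ a real section of $H(M)=\ker\theta$, and set $\beta := \alpha_{\theta,T''}-\alpha_{\theta, T}$. Checking separately on $T^{1,0}M\oplus T^{0,1}M$ (where $\beta(X)=\theta([V,X])=-\mathcal{L}_V\theta(X)$ because $V\in\ker\theta$) and at $X=T$ (where a brief computation gives $\beta(T)=\theta([T,V])=-\mathcal{L}_V\theta(T)$), one obtains the clean identity
\[
\beta \;=\; -\mathcal{L}_V\theta,
\]
hence $d\beta = -\mathcal{L}_V\,d\theta$. The task reduces to showing that $(\mathcal{L}_V d\theta)(Z,\overline W)|_p = 0$ whenever $Z_p, W_p \in \levinull_p$. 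Expanding the Lie derivative,
\[
(\mathcal{L}_V d\theta)(Z,\overline W) \;=\; V\bigl(d\theta(Z,\overline W)\bigr) - d\theta([V,Z],\overline W) - d\theta(Z,[V,\overline W]),
\]
the two bracket terms are handled as follows: by property iii) of Proposition-Definition \ref{d'angelo_prp}, $\theta([V,Z])|_p=\beta(Z)|_p=0$, so $[V,Z]|_p$ has trivial $\C T$-component and lies in $T^{1,0}_pM\oplus T^{0,1}_pM$; the resulting term collapses to an evaluation of $\lambda_\theta$ involving $W_p\in\levinull_p$, and vanishes by Cauchy--Schwarz for the semi-definite Levi form. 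The third term is symmetric.

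The main obstacle, and the place where pseudoconvexity enters essentially, is the vanishing of $V(d\theta(Z,\overline W))|_p = 2i\,V\lambda_\theta(Z,\overline W)|_p$. I would establish it by a minimum argument: the function $U \mapsto \lambda_\theta(U,\overline U)$ is non-negative everywhere on $M$ and vanishes at $p$ for every local section $U$ with $U_p \in \levinull_p$; hence $p$ is a minimum, and the derivative in every direction $V_p\in T_pM$ vanishes. A polarization then promotes this to $V\lambda_\theta(Z,\overline W)|_p = 0$ for all $Z_p, W_p \in \levinull_p$, closing the argument.
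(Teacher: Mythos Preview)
Your argument for the transformation formula \eqref{hermitian_form_dependence} is essentially the paper's: you recover $\alpha'=\alpha+df$ on $\C H(M)$ (part iv) of Proposition-Definition \ref{d'angelo_prp}), pass to the $(1,0)$-part, and apply $\dbar$. That part is fine.

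The $T$-independence argument, however, rests on a false identification. You claim that for $Z_p,W_p\in\levinull_p$ the Hermitian form $\dbar\alpha_{1,0}(Z,\overline W)|_p$ agrees (up to a factor) with $d\alpha(Z,\overline W)|_p$. It does not: for a real one-form $\alpha$ one has on $\levinull$
\[
d\alpha(Z,\overline W)=\dbar\alpha_{1,0}(Z,\overline W)+\partial\alpha_{0,1}(Z,\overline W)
=\dbar\alpha_{1,0}(Z,\overline W)-\overline{\dbar\alpha_{1,0}(W,\overline Z)},
\]
so $d\alpha(Z,\overline Z)=2i\,\mathrm{Im}\,\dbar\alpha_{1,0}(Z,\overline Z)$, not a multiple of $\dbar\alpha_{1,0}(Z,\overline Z)$. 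In fact, part v) of Proposition-Definition \ref{d'angelo_prp} already says $d\alpha|_{\levinull\oplus\overline\levinull}=0$, so your computation that $(\mathcal L_V d\theta)(Z,\overline W)|_p=0$ merely reproves that $d\beta|_{\levinull\oplus\overline\levinull}=0$; it does \emph{not} yield $\dbar\beta_{1,0}=0$ on $\levinull$, which is what is needed (were $\dbar\alpha_{1,0}$ zero on $\levinull$, the quantity $\norm(\alpha;\levinull)$ would be trivial).

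The paper instead works directly with the Cartan-type expression $2\dbar\alpha_{1,0}(Z,\overline W)=-\overline W(\alpha(Z))-\alpha([Z,\overline W]_{1,0})$ on $\levinull$ and shows, via Jacobi's identity and the Levi-null hypotheses, that $\overline W(\theta([V,Z]))+\theta([V,[Z,\overline W]])$ vanishes at $p$. Your Lie-derivative packaging $\beta=-\mathcal L_V\theta$ is elegant and the term-by-term analysis you sketch is close to what is required, but you must apply it to $\dbar\beta_{1,0}$ (equivalently $d\beta_{1,0}$) rather than to $d\beta$; this means tracking the $(1,0)$-projection of $[Z,\overline W]$ in the bracket term, which is precisely the place where the paper's Jacobi-identity step enters.
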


\begin{proof}
	Extend $Z_p\in \levinull_p$ to a smooth section of $T^{1,0}M$, as usual. Since $\alpha(Z)=\theta([T,Z])$, independence of the choice of $T$ boils down to checking that
	\begin{equation*}
	\overline{W}\left(\theta([X,Z])\right)_{|p}+\theta([X,[Z,\overline{W}]])_{|p}=0
	\end{equation*} for every $X$ such that $\theta(X)\equiv0$. By Cartan's formula and Jacobi identity, \begin{eqnarray*}
		&&\overline{W}\left(\theta([X,Z])\right)+\theta([X,[Z,\overline{W}]])\\
		&=&2d\theta(\overline{W},[X,Z])+[X,Z]\left(\theta(\overline{W})\right)+\theta([\overline{W},[X,Z]])+\theta([X,[Z,\overline{W}]])\\
		&=&2d\theta(\overline{W},[X,Z])-\theta([Z,[\overline{W},X]]).
	\end{eqnarray*}
	Since $Z_p\in \levinull_p$, $[X,Z]_p\in \C H_p(M)$, and therefore $d\theta(\overline{W},[X,Z])_{|p}=0$, because $W_p\in \levinull_p$. Analogously, $[\overline{W},X]_p\in \C H_p(M)$ and therefore $\theta([Z,[\overline{W},X]])_{|p}=0$ too.
	
	We are left with the proof of \eqref{hermitian_form_dependence}. By part iv) of Definition-Proposition \ref{d'angelo_prp}, \begin{equation*}
	\alpha'=\alpha+df
	\end{equation*} on sections of $\C H(M)$, from which we immediately get \eqref{hermitian_form_dependence}.
\end{proof}

To proceed, we need an analogue of the concept of comass for a $1$-form acting on a distribution $\mathcal{D}$.

\begin{dfn}[$(h,\mathcal{D})$-size]\label{hDsize_dfn} Let $M$ be a real smooth manifold. Fix a Hermitian metric $h$ on $\C TM$ and a complex distribution $\mathcal{D}\subseteq\C TM$. If $\alpha$ is a one-form on $M$, we define the $(h,\mathcal{D})$-size of $\alpha$ as \begin{equation*}
||\alpha||_{h,\mathcal{D}}:=\sup\{|\alpha(Z_p)|\colon\ p\in M, \ Z_p\in\mathcal{D}_p \text{ s.t. } |Z_p|=1\},
\end{equation*}	
where $|Z_p|^2=h_p(Z_p,Z_p)$.
\end{dfn}

\begin{rmk} If $h$ and $h'$ are two comparable Hermitian metrics on $\C TM$, i.e., if there is a positive constant $C$ such that
$$C^{-1}h_p(Z_p,Z_p)\leq h'_p(Z_p,Z_p)\leq Ch_p(Z_p,Z_p)\qquad \forall p\in M,\quad \forall _p\in \C T_pM, $$
then the two corresponding notions of size given by Definition \ref{hDsize_dfn} are comparable:
$$C^{-\frac{1}{2}}\|\alpha\|_{h,\levinull}\leq \|\alpha\|_{h',\levinull}\leq C^{\frac{1}{2}}\|\alpha\|_{h,\levinull}\;.$$
Notice that, when $M$ is compact, all the Hermitian metrics on $\C TM$ are comparable.
\end{rmk}

The next key definition provides a way to measure the size of the D'Angelo class on an arbitrary sub-distribution of the Levi null distribution.

\begin{dfn}
	\label{adachi_yum_dfn}
	Let $(M,T^{1,0}M)$ be a pseudoconvex CR manifold of hypersurface type. Fix a Hermitian metric $h$ on $\C TM$.
	
	If $\alpha\in \mathcal{A}_M$ is a D'Angelo form and $\distr$ is a sub-distribution of the Levi-null distribution $\levinull$, we put
	\begin{equation*}
	\norm(\alpha;\distr):=\inf\left\{t>0\colon\  \left(\overline{\alpha_{0,1}}\wedge \alpha_{0,1}\right)<t\dbar \alpha \text{  on } \distr\right\}.
	\end{equation*}
If $K\in(0,+\infty]$, then we define \begin{equation*}
	\norm_K(\distr):=\inf\left\{\norm(\alpha;\distr)\colon\  \alpha\in \mathcal{A}_M \quad \&\quad  ||\alpha||_{h,\levinull}< K\right\}.
	\end{equation*}
We write $\norm(\distr):=\norm_{+\infty}(\distr)=\inf\left\{\norm(\alpha;\distr)\colon\ \alpha\in \mathcal{A}_M\right\}$.
\end{dfn}

Notice that, if $M$ is compact, then \begin{equation}\label{norm}
\norm(\distr)=\inf_K\norm_K(\distr),
\end{equation} for any sub-distribution $\distr\subseteq\levinull$. This is a simple consequence of the fact that, when $M$ is compact, any D'Angelo form has finite $(h,\levinull)$-size.

To appreciate the meaning of Definition \ref{adachi_yum_dfn}, the reader may observe that the vanishing of $\norm(\distr)$ is equivalent to the existence of a sequence of D'Angelo forms $\alpha^{(m)}$ with the property that the inequality of Hermitian forms $\left(\overline{\alpha^{(m)}_{0,1}}\wedge \alpha^{(m)}_{0,1}\right)<m^{-1}\dbar \alpha^{(m)}$ holds on $\distr$, while the vanishing of $\norm_K(\distr)$ for some $K<+\infty$ amounts to the existence of a sequence $\alpha^{(m)}$ as above, with the additional property that the $(h,\levinull)$-size $||\alpha^{(m)}||_{h,\levinull}$ remains bounded as $m$ goes to $+\infty$. In view of the Remark above, the existence of a $K<+\infty$ such that $\norm_K(\distr)=0$ is independent of the choice of the metric $h$.

\begin{prp}\label{norm_complex_submfd}
Let $(M,T^{1,0}M)$ be a pseudoconvex CR manifold of hyperstype and $\alpha\in \mathcal{A}_{(M,T^{1,0}M)}$ be a D'Angelo form on $M$.  Suppose that $M$ contains a complex manifold $V$ (i.e. containing a differentiable manifold $V$ whose tangent space is a complex subbundle of $H(M)$) such that $[\alpha\vert_V]\in H^1_{\mathrm{dR}}(V)$ is non trivial.

Then $\norm(\core(\levinull))>0$ (hence obviously $\norm_K(\core(\levinull))>0$ for all $K>0$).
\end{prp}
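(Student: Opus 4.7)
The plan is to combine two monotonicity facts with the non-degeneracy of the de Rham norm-like function from Theorem \ref{norm_thm}. First, since $V$ is a complex submanifold of $M$, its tangent bundle is $J_b$-invariant in $H(M)$, so Proposition \ref{horizontal_lem} yields $T^{1,0}V \subseteq \core(\levinull)$. I would then note the trivial monotonicity $\norm(\alpha;\distr_1) \leq \norm(\alpha;\distr_2)$ whenever $\distr_1 \subseteq \distr_2$ (every $t$ witnessing the inequality on the larger distribution also witnesses it on the smaller), which upon taking infima over $\alpha \in \mathcal{A}_M$ gives $\norm(\core(\levinull)) \geq \norm(T^{1,0}V)$. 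Therefore it suffices to prove $\norm(T^{1,0}V) > 0$.

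Next, for any fixed D'Angelo form $\alpha \in \mathcal{A}_M$, I would identify $\norm(\alpha; T^{1,0}V)$ with the intrinsic quantity $\norm_{V,0}(\iota^*\alpha)$ on $V$, where $\iota: V \hookrightarrow M$ is the inclusion. For $Z, W \in T^{1,0}V$, the Hermitian form $\alpha_{1,0}\wedge \alpha_{0,1}$ is trivially unchanged by restriction. For $\dbar\alpha_{1,0}$, the key input is the discussion of Section \ref{normlike_sec}: on the Levi null distribution, the intrinsic CR $\dbar$ on $M$ agrees with the intrinsic Dolbeault $\dbar$ of any complex submanifold whose tangent space contains the relevant $(1,0)$ vectors. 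Thus the inequality $\alpha_{1,0}\wedge\alpha_{0,1} < t\,\dbar\alpha_{1,0}$ read on $T^{1,0}V$ is precisely the intrinsic inequality on $V$ defining $\norm_{V,0}(\iota^*\alpha) \leq t$.

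The final step uses property iv) of Proposition-Definition \ref{d'angelo_prp}: any two D'Angelo forms differ by $df$ when restricted to $\levinull \oplus \overline{\levinull}$; since $\C TV \subseteq \levinull \oplus \overline{\levinull}$ (the complex-submanifold hypothesis makes $\C T_pV = (\C T_pV \cap T^{1,0}M) \oplus (\C T_pV \cap T^{0,1}M)$), their pull-backs to $V$ differ by an exact one-form. Hence as $\alpha$ ranges over $\mathcal{A}_M$, $\iota^*\alpha$ ranges within a subset of the fixed de Rham class $[\iota^*\alpha_0]$, and therefore
$$\norm(T^{1,0}V) \;=\; \inf_{\alpha \in \mathcal{A}_M}\norm_{V,0}(\iota^*\alpha) \;\geq\; \inf_{g \in C^\infty(V,\R)}\norm_{V,0}(\iota^*\alpha_0 + dg) \;=\; \norm_V([\iota^*\alpha_0]),$$
which is strictly positive by the non-degeneracy part of Theorem \ref{norm_thm}, as $[\iota^*\alpha_0] \neq 0$ by hypothesis.

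The main obstacle I anticipate is the identification in the middle paragraph: one must rigorously match the $(1,1)$-form $\dbar\alpha_{1,0}$ computed on the abstract CR manifold $M$ with the intrinsic Dolbeault derivative on $V$. This is conceptually clean but notationally delicate; the essential simplification is that $T^{1,0}V$ sits inside $\levinull$, where the various interpretations of $\dbar$ discussed in Section \ref{normlike_sec} collapse to a single well-defined Hermitian form, so the extrinsic-vs-intrinsic ambiguity disappears.
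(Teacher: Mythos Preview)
Your proposal is correct and follows essentially the same approach as the paper: inclusion $T^{1,0}V\subseteq\core(\levinull)$ via Proposition~\ref{horizontal_lem}, monotonicity of $\norm(\cdot)$ in the distribution, and non-degeneracy of $\norm_V$ from Theorem~\ref{norm_thm}. If anything, you are more careful than the paper, which compresses your middle two paragraphs into the phrase ``unravelling the definitions'' without spelling out either the identification of the intrinsic and extrinsic $\dbar$ on $\levinull$ or the fact that varying $\alpha$ over $\mathcal{A}_M$ only moves $\iota^*\alpha$ within its de Rham class (so that the infimum over $\mathcal{A}_M$ is bounded below by, rather than equal to, $\norm_V([\iota^*\alpha])$).
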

\begin{proof}As a first remark, one should notice that $\C TV\cap T^{1,0}M\subseteq\levinull$ and, in fact, as $\C TV\cap T^{1,0}M$ is perfect, $\C TV\cap T^{1,0}M\subseteq\core(\levinull)$, by Lemma \ref{horizontal_lem}.

By Theorem \ref{norm_thm}-i), as $[\alpha\vert_V]$ is non trivial in $H^1_{\mathrm{dR}}(V)$, $\norm_V([\alpha\vert_V])>0$. Unravelling the definitions, this is equivalent to
$$\inf\{t>0\ :\ (\overline{\alpha_{0,1}}\wedge \alpha_{0,1})<t\dbar\alpha\ \mathrm{ on }\ \C TV\ \mathrm{ with }\ \alpha\in\mathcal{A}_M\}>0\;.$$
As it is obvious, if we take the infimum on a larger distribution (namely $\core(\levinull)$), it will at most increase, thus showing that $\norm(\core(\levinull))>0$.
\end{proof}

In general, if $\mathcal{E}\subseteq\mathcal{D}$ are distributions, then $\norm_K(\mathcal{E})\leq\norm_K(\mathcal{D})$ for all $K\in (0,+\infty]$.

\begin{rmk}If $M$ is a type III CR structure, as described in Section \ref{CRexamples_sec}, with local coordinates $t, z$, a D'Angelo form is given by 
$$\alpha=\Re(g'\omega)=\Re(ie^{it}u(z)dz)=\Im(e^{it}u(z)dz)\;.$$
As observed in Section \ref{CRexamples_sec}, type III CR structures contain a complex submanifold given by a section $s:Y\to M$ of the projection $M\to Y$; if we write $u(z)=f(z)+h_ze^{-ih}$, then the image of such section is described by the equation $t=h(z)$ and, on it, the D'Angelo form given before becomes
$$\alpha\vert_{s(Y)}=\Im(f(z)e^{ih}dz)+d^ch\;.$$
In the embedded realization given at the end of Section \ref{hypersurfaces_sec}, we consider only an open set of the compact Riemann surface $Y$, where we can suppose that $f(z)\equiv 0$, so that $\alpha=d^ch$; we see that, according to Proposition \ref{norm_complex_submfd}, $\norm(\core(\levinull))$ will be positive as soon as $d^ch$ does not give a trivial cohomology class, which is the case, for example, in the Worm domain.
\end{rmk}

\begin{prp}\label{norm_technical_prp}
Assume that $(M, T^{1,0}M)$ is a compact pseudoconvex real hypersurface in a complex manifold $X$, and suppose that there exists a smooth strictly plurisubharmonic function in a neighborhood of $M$ in $X$. Then the quantity $\norm(\distr)$ is finite for every distribution $\distr\subseteq \levinull$, and there exists $K<+\infty$ such that $\norm_K(\distr)$ is also finite. Moreover, if
\begin{equation*}
\widetilde\norm(\alpha;\distr):=\inf\left\{t>0\colon\ \left(\overline{\alpha_{0,1}}\wedge \alpha_{0,1}\right)\leq t\dbar \alpha \text{  on } \distr\right\},
\end{equation*} then we have 	\begin{equation*}
\norm(\distr)=\inf\left\{\widetilde\norm(\alpha;\distr)\colon\ \alpha\in \mathcal{A}_M\right\}
\end{equation*}
and
\begin{equation*}
\norm_K(\distr)=\inf\left\{\widetilde\norm(\alpha;\distr)\colon \alpha\in \mathcal{A}_M \quad \&\quad  ||\alpha||_{h,\levinull}< K\right\}.
\end{equation*}
\end{prp}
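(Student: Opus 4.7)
The unifying tool is deformation of the pseudo-Hermitian structure $\theta$ underlying a given D'Angelo form $\alpha$ via $\theta_t := e^{-t\phi|_M}\theta \in \Theta_+(M)$, where $\phi$ is the strictly plurisubharmonic function supplied by hypothesis. By Lemma \ref{dbar_alpha_lem} and part iv) of Proposition-Definition \ref{d'angelo_prp}, the resulting family of D'Angelo forms $\alpha_t$ satisfies $\alpha_t = \alpha - t\, d\phi$ on $\levinull\oplus\overline{\levinull}$, and $\dbar\alpha_t(Z,\overline Z) = \dbar\alpha(Z,\overline Z) + t\,\partial\dbar\phi(Z,\overline Z)$ on $\levinull$. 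Thus $\partial\dbar\phi$ supplies positive definiteness to $\dbar\alpha_t$, paid for by a controlled modification of $\alpha_{t,0,1}$.

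\textbf{Steps (a) and (b).} Fix any D'Angelo form $\alpha_0$ and consider $\alpha_\lambda$ for large $\lambda>0$. By compactness of $M$ and strict plurisubharmonicity of $\phi$, there exist constants $c, C>0$ such that $\partial\dbar\phi(Z,\overline Z)\geq c|Z|_h^2$ for all $Z\in T^{1,0}X|_M$, and $\dbar\alpha_0(Z,\overline Z)\geq -C|Z|_h^2$ for $Z\in T^{1,0}M$. Hence for $\lambda$ sufficiently large, $\dbar\alpha_\lambda(Z,\overline Z)\geq \tfrac12 c\lambda|Z|_h^2$ on $\levinull$, while $|\alpha_{\lambda,0,1}(\overline Z)|^2$ is bounded by a quadratic polynomial in $\lambda$ times $|Z|_h^2$. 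This yields $\norm(\alpha_\lambda;\levinull)<+\infty$, proving (a). Since $\|\alpha_\lambda\|_{h,\levinull}<+\infty$ by smoothness on compact $M$, (b) follows by choosing any $K$ exceeding it.

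\textbf{Steps (c) and (d).} The direction $\inf\widetilde\norm(\alpha;\distr)\leq\norm(\distr)$ is trivial. For the reverse, fix $\alpha$ with $\widetilde\norm(\alpha;\distr) = s_0$ and $\epsilon>0$, and abbreviate $a := \alpha_{0,1}(\overline Z)$, $b := \overline Z(\phi)$, $A := \dbar\alpha(Z,\overline Z)\geq 0$, $B := \partial\dbar\phi(Z,\overline Z)$ for $Z\in\distr$. Since $|a|^2\leq s_0 A$, a direct expansion combined with the Young-type inequality $2t|a||b|\leq\tfrac{\epsilon}{2}A + \tfrac{2s_0}{\epsilon}t^2|b|^2$ and the Cauchy--Schwarz-type bound $|b|^2\leq C_0 B$ (with $C_0 := \|\dbar\phi\|_{C^0(M)}^2/c$, a consequence of compactness together with strict plurisubharmonicity of $\phi$) yields
\[
|\alpha_{t,0,1}(\overline Z)|^2 - (s_0+\epsilon)\dbar\alpha_t(Z,\overline Z) \leq -\tfrac{\epsilon}{2}A + tB\left[tC_0\left(1+\tfrac{2s_0}{\epsilon}\right) - (s_0+\epsilon)\right].
\]
For $t$ small enough (uniformly in $Z$), the bracket is negative, and since $B>0$ for all $Z\neq 0$ by strict plurisubharmonicity, the right-hand side is strictly negative on $\distr\setminus\{0\}$. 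Hence $\norm(\alpha_t;\distr)\leq s_0+\epsilon$; letting $\epsilon\to 0$ and taking the infimum over $\alpha$ yields (c). For (d), we further note $\|\alpha_t\|_{h,\levinull}\leq\|\alpha\|_{h,\levinull} + t\|d\phi\|_{h,\levinull}$, so for $t$ small the condition $\|\alpha\|_{h,\levinull}<K$ persists to $\|\alpha_t\|_{h,\levinull}<K$.

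\textbf{Main obstacle.} The technical heart is the AM-GM balancing in step (c): the positive linear-in-$t$ contribution $(s_0+\epsilon)tB$ from $\dbar\alpha_t$ must simultaneously absorb the quadratic $t^2|b|^2$ arising from $|a-tb|^2$ and the mixed term $2t|a||b|$. The decisive enabling estimate is the uniform inequality $|\overline Z(\phi)|^2\leq C_0\,\partial\dbar\phi(Z,\overline Z)$ on $\levinull$, which is the essential and only use of the strict plurisubharmonicity of $\phi$ in a neighborhood of the compact hypersurface $M$.
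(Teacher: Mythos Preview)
Your proof is correct and follows the same idea as the paper: deform a given D'Angelo form by $-t\,d\phi$ and exploit the strict positivity of $\partial\dbar\phi$. The paper's proof is in fact much terser than yours---it only spells out parts (a)--(b) explicitly and dismisses the ``Moreover'' clause as ``an immediate consequence''---so your careful AM--GM balancing for (c)--(d), together with the observation that $|\overline Z\phi|^2\leq C_0\,\partial\dbar\phi(Z,\overline Z)$ on $\levinull$, is a legitimate unpacking of what the paper leaves implicit.
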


\begin{proof}
Let $\alpha$ be any D'Angelo form and $f$ a smooth strictly plurisubharmonic function defined in a neighborhood of $M$. Then $\beta=\alpha-adf$ is a D'Angelo form for every $a>0$ (by Proposition \ref{d'angelo_prp}, part iv)) and the associated Hermitian form is $\dbar\beta=\dbar \alpha+a\partial\dbar f$, by Proposition \ref{dbar_alpha_lem}. If $a$ is large enough, this form is positive, and the inequality of quadratic forms \begin{equation*}
\left(\overline{\beta_{0,1}}\wedge \beta_{0,1}\right)<t\dbar\beta
\end{equation*}
holds on $\levinull$ for $t$ large enough, showing that $\norm(\beta;\levinull)$ is finite. The statement is an immediate consequence of this fact.
\end{proof}

The hypotheses of the last proposition hold, for instance, when $M$ is the boundary of a precompact, smoothly bounded, pseudoconvex domain in a Stein manifold.

\section{Reduction to the core}\label{reduction_sec}

We can finally state and prove the main result of the paper. 

\begin{thm}\label{reduction_thm} Let $(M,T^{1,0}M)$ be a compact pseudoconvex CR manifold of hypersurface type. For every $K\in(0,+\infty]$, we have
	\begin{equation*}
	\norm_K(\levinull)=\norm_K(\core(\levinull)).
	\end{equation*}
\end{thm}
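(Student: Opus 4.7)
The inequality $\norm_K(\core(\levinull)) \leq \norm_K(\levinull)$ is immediate, since $\core(\levinull) \subseteq \levinull$ and $\norm_K$ is monotone under inclusion of sub-distributions. The content of the theorem is the reverse inequality $\norm_K(\levinull) \leq \norm_K(\core(\levinull))$, which I plan to establish by transfinite induction on the iterated derived construction of Section~\ref{derived_sec}: namely, $\norm_K(\levinull^{(\gamma)}) = \norm_K(\levinull)$ for every ordinal $\gamma$. Taking $\gamma = \omega_1$ then yields the theorem, since $\levinull^{(\omega_1)} = \core(\levinull)$.

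The successor step asks for $\norm_K(\distr) \leq \norm_K(\distr')$ for any closed sub-distribution $\distr \subseteq \levinull$. Fix $\epsilon>0$ and a D'Angelo form $\alpha_0$ with $\|\alpha_0\|_{h,\levinull} < K-\epsilon$ and $|\alpha_0(Z)|^2 < t'\dbar\alpha_0(Z,\bar Z)$ for all $Z\in\distr'$, where $t'$ is close to $\norm_K(\distr')$. Every $Z_0 \in \mathbb{S}(\distr)\setminus \mathbb{S}(\distr')$ has base $p_0\in S_\distr$ and $Z_0\notin \C T_{p_0}S_\distr$ by definition of $\distr'$, so there exists a smooth real function $g$ with $g|_{S_\distr}\equiv 0$ and $Z_0 g\neq 0$. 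By continuity of the strict inequality there is an open neighborhood $W\supset \mathbb{S}(\distr')$ on which $|\alpha_0(Z)|^2 < t'' \dbar\alpha_0(Z,\bar Z)$ for some $t' < t'' < t := \norm_K(\distr')+\epsilon$, and by compactness of $\mathbb{S}(\distr)$ (Proposition~\ref{compactness_prp}) we may cover the closed complement $\mathbb{S}(\distr)\setminus W$ by finitely many open sets $V_1,\ldots,V_N\subset \mathbb{S}(\distr)$, each a neighborhood of some $Z_j$ on which $|Zg_j|\geq c_j>0$ for a function $g_j$ as above.

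I would then set $\beta := \alpha_0 + df$ with $f := -\sum_j \mu_j (\chi_j g_j)^2$, choosing coefficients $\mu_j>0$ large enough below, and letting each $\chi_j$ be a smooth cutoff equal to $1$ on a tubular neighborhood $U_j'$ of $\pi(V_j)\subseteq S_\distr$ in $M$ of thickness $r>0$ and supported in its thickness-$2r$ enlargement $U_j$. By Proposition~\ref{d'angelo_prp}(iv), $\beta|_{\levinull\oplus\overline{\levinull}}$ agrees with that of a genuine D'Angelo form; moreover Lemma~\ref{dbar_alpha_lem} combined with the vanishing of $\chi_j g_j$ on $S_\distr$ gives
\[
\beta(Z) = \alpha_0(Z), \qquad \dbar\beta(Z,\bar Z) = \dbar\alpha_0(Z,\bar Z) + 2\sum_j \mu_j\,|Z(\chi_j g_j)|^2
\]
for every $Z\in\mathbb{S}(\distr)$. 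On $W$ the added nonnegative term only helps, so the inequality survives with parameter $t''<t$. On each $V_j$ the $j$-th contribution is at least $\mu_j c_j^2/2$, and choosing $\mu_j$ large enough so that $t\,\mu_j c_j^2/2$ exceeds $K^2 - t\inf\dbar\alpha_0$ forces the inequality with parameter $t$ on $V_j$.

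For the size, using that $g_j$ vanishes on $S_\distr$ and that $U_j$ has thickness $O(r)$, one has $\|\chi_j g_j\|_\infty = O(r)$, while the cutoff contributions in $d(\chi_j g_j)$ are controlled by $|g_j|\cdot|d\chi_j| = O(r)\cdot O(1/r) = O(1)$, so $\|d(\chi_j g_j)\|_\infty$ is uniformly bounded in $r$. Hence $\|df\|_{h,\levinull} \leq 2\sum_j \mu_j \|\chi_j g_j\|_\infty \|d(\chi_j g_j)\|_\infty = O(N\mu_{\max} r)<\epsilon$ once $r$ is small enough, whence $\|\beta\|_{h,\levinull} < K$. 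The limit step follows from Cantor's finite intersection property: for a limit ordinal $\gamma$ the compact sphere bundles $\mathbb{S}(\levinull^{(\delta)})$, $\delta<\gamma$, decrease to $\mathbb{S}(\levinull^{(\gamma)})$, and if $\alpha_0$ realizes the strict inequality with parameter $t$ on $\mathbb{S}(\levinull^{(\gamma)})$, its closed failure set in $\mathbb{S}(\levinull)$ is disjoint from this intersection, hence from $\mathbb{S}(\levinull^{(\delta)})$ for some $\delta<\gamma$, giving $\norm_K(\levinull^{(\delta)})\leq t$. The main obstacle is thus the successor step: simultaneously enforcing the Hermitian inequality on the $V_j$'s (which calls for large $\mu_j$) while preserving the bound $\|\beta\|_{h,\levinull} < K$ (which calls for small $\mu_j\|\chi_j g_j\|_\infty$); this is resolved by exploiting that $g_j|_{S_\distr}=0$, so that $\|\chi_j g_j\|_\infty$ can be made arbitrarily small by shrinking the tubular thickness $r$, while the directional derivative $|Zg_j|$ stays uniformly bounded away from zero on $V_j$.
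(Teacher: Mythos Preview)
Your proposal is correct and follows essentially the same strategy as the paper: transfinite induction, with the successor step handled by modifying a D'Angelo form via $-d(\text{sum of squares of functions vanishing on }S_\distr)$, then using a compactness argument on $\mathbb{S}(\distr)$ together with the observation that the size perturbation can be made arbitrarily small by localizing with cutoffs (since the functions $g_j$ vanish on $S_\distr$), and the limit step via the finite intersection property. The only cosmetic differences are that the paper uses a single cutoff $\chi_\varepsilon$ around all of $S_\distr$ and absorbs the large constants into the $g_j$'s before assembling $\widetilde\gamma=\alpha-d(\sum g_j^2)$, whereas you use separate cutoffs $\chi_j$ around each $\pi(V_j)$ and separate coefficients $\mu_j$; these are equivalent packagings of the same idea.
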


\begin{proof}
By \eqref{norm}, it is enough to consider the case of $K$ finite. We argue by transfinite induction. We need to show two facts: \begin{itemize}
		\item[(A)] If $\distr$ is a sub-distribution of the Levi-null distribution, then $\norm_K(\distr)=\norm_K(\distr')$.
		\item[(B)] If $\lambda_1$ is an ordinal and $\{\distr_\lambda\}_{\lambda<\lambda_1}$ is a decreasing sequence of sub-distributions of the Levi-null distribution, then $\inf_{\lambda<\lambda_1}\norm_K(\distr_\lambda)=\norm_K(\bigcap_\lambda\distr_\lambda)$.
	\end{itemize}

Let us prove (A). Notice first that $\norm_K(\distr)\geq\norm_K(\distr')$ is trivial. To prove the reverse inequality, fix $t>\norm_K(\distr')$. Then there exists a D'Angelo form $\alpha$ such that $\norm(\alpha;\distr')<t$ and $||\alpha||_\levinull<K$. If we show that there exists another one $\gamma$ such that $\norm(\gamma;\distr)<t$ and $||\gamma||_\levinull<K$, then the thesis will follow.

Let $\beta$ be an arbitrary D'Angelo form. Consider the set \begin{equation*}
V(\beta):=\left\{L\in \C TM\colon \left(\overline{\beta_{0,1}}\wedge \beta_{0,1}\right)(L,\overline{L})<t\dbar \beta(L,\overline{L})\right\}.
\end{equation*}

The set $V(\beta)$ is clearly open and conical in $T^{1,0}M$.

Recall that, by part iv) of Proposition \ref{d'angelo_prp}, $\alpha+df\in \mathcal{A}_M$ for every $f\in C^\infty(M,\R)$. Denote by $J$ the ideal of smooth real-valued functions vanishing on the support of $\distr$. We claim that \begin{equation}\label{claimed_cover}
\bigcup_{g\in J} V(\alpha-d(g^2))\supseteq \distr \setminus 0.
\end{equation}
Let us prove \eqref{claimed_cover}. By assumption, $\norm(\alpha;\distr')<t$, that is, $\distr'\setminus 0\subseteq V(\alpha)$, so our task reduces to proving that for every $p\in S_\distr$ and $L_p\in \distr_p\setminus \C T_pS_\distr$, $L_p\in V(\alpha-d(g^2))$ for an appropriate choice of $g\in J$.

If $L_p\notin \C T_pS_\distr$, there exists $g\in J$ such that $L_pg\neq 0$. Let $\beta=\alpha-d((cg)^2)$, where $c\in \R$. Notice that $d(g^2)=0$ at points of $S_\distr$. Lemma \ref{dbar_alpha_lem} gives \begin{eqnarray}
&&t\dbar \beta(L_p,\overline{L_p})-\left(\overline{\beta_{0,1}}\wedge \beta_{0,1}\right)(L_p,\overline{L_p})\notag\\
&=&tc^2\overline{L}L(g^2)(p)+t\dbar \alpha(L_p,\overline{L_p})-\left(\overline{\alpha_{0,1}}\wedge\alpha_{0,1}\right)(L_p,\overline{L_p})\notag\\
&=&2tc^2|L_pg|^2+t\dbar \alpha(L_p,\overline{L_p})-\left(\overline{\alpha_{0,1}}\wedge \alpha_{0,1}\right)(L_p,\overline{L_p}).\label{quadratic_h}\end{eqnarray}
Choosing $c$ large, we may guarantee that \eqref{quadratic_h} is positive, that is, $L_p\in V(\alpha-d((cg)^2))$, as we wanted. The proof of \eqref{claimed_cover} is complete.

Now, by Proposition \ref{compactness_prp} there exists $g_1,\dots, g_N$ in the ideal $J$ such that \begin{equation}\label{claimed_cover_2}
\bigcup_{j=1}^N V(\alpha-d(g_j^2))\supseteq \distr \setminus 0.
\end{equation}
Define $\widetilde\gamma:=\alpha-d(\sum_{j=1}^Ng_j^2)$, which is clearly a D'Angelo form. A computation analogous to the one giving \eqref{quadratic_h} yields\begin{eqnarray*}
&&t\dbar \widetilde\gamma(L_p,\overline{L_p})-\left(\overline{\widetilde\gamma_{0,1}}\wedge \widetilde\gamma_{0,1}\right)(L_p,\overline{L_p})\\
&=&t\sum_{j=1}^N|L_pg_j|^2+t\dbar \alpha(L_p,\overline{L_p})-\left(\overline{\alpha_{0,1}}\wedge\alpha_{0,1}\right)(L_p,\overline{L_p}),\end{eqnarray*}
where $L_p\in\levinull_p$. By \eqref{claimed_cover_2} and \eqref{quadratic_h}, we see immediately that the form above is positive on $\distr$. This proves the desired bound $\norm(\widetilde\gamma;\distr)<t$ at the cost of an (unquantified) increase in size of the form. This is a drawback of the compactness argument employed, to which we now remedy.

Given $\varepsilon>0$, let $\chi_\varepsilon$ be a smooth function on $M$ with the following properties: \begin{enumerate}
	\item $\chi_\varepsilon$ is supported on an $\varepsilon$-neighborhood $S_\varepsilon$ of $S_\distr$;
	\item $0\leq\chi_\varepsilon\leq 1$ and $\chi_\varepsilon$ is identically equal to $1$ in an open neighborhood of $S_\distr$;
	\item  $||d\chi_\varepsilon||_\infty\leq C\varepsilon^{-1}$, where $C$ is uniform in $\varepsilon>0$.
\end{enumerate}
The emended form is $\gamma:=\alpha-d(\sum_{j=1}^N\chi_\varepsilon^2 g_j^2)$ for $\varepsilon$ small enough. In fact, we have $\norm(\gamma;\distr)=\norm(\widetilde\gamma;\distr)<t$, as a consequence of the second property above, while by the two other properties, \begin{eqnarray*}
\left\|d(\sum_{j=1}^N\chi_\varepsilon^2 g_j^2)\right\|_{\infty}&\leq&2\sum_{j=1}^N\left\|\chi_\varepsilon g_j^2d\chi_\varepsilon\right\|_{\infty}+2\sum_{j=1}^N\left\|\chi_\varepsilon^2 g_jd g_j\right\|_{\infty}\\
&\leq&2C\varepsilon^{-1}\sum_{j=1}^N\sup_{S_\varepsilon} |g_j|^2+2\sum_{j=1}^N\left\|d g_j\right\|_{\infty}\sup_{S_\varepsilon} |g_j|.
\end{eqnarray*}
Since $g_j$ vanishes on $S_\distr$, $\sup_{S_\varepsilon} |g_j|\leq C_j\varepsilon$ and one concludes that \begin{equation*}
\left\|\gamma\right\|_\levinull\leq \left\|\alpha\right\|_\levinull + \left\|d(\sum_{j=1}^N\chi_\varepsilon^2 g_j^2)\right\|_{\infty}\leq \left\|\alpha\right\|_\levinull+C'\varepsilon,
\end{equation*}
which is the desired bound on the $\levinull$-size of $\gamma$, for $\varepsilon$ small enough. This completes the proof of (A).

The proof of (B) is simpler. If $\distr_\lambda=0$ for some $\lambda<\lambda_1$, both sides of the identity equal $1$ and the statement is trivial. We can therefore assume that all the distributions $\distr_\lambda$ are nontrivial.

Fix  $t>\norm_K(\bigcap_\lambda\distr_\lambda)$. By definition, there exists a D'Angelo form $\alpha$ of $\levinull$-size $<K$ such that \begin{equation*}
\left(\overline{\alpha_{0,1}}\wedge\alpha_{0,1}\right)<t\dbar \alpha
\end{equation*} as quadratic forms on $\bigcap_\lambda\distr_\lambda$. Define $V(\alpha)$ as above. Since $\bigcap_\lambda\distr_\lambda$ is contained in $\Omega$, the complements $\C TM\setminus \distr_\lambda$, together with $\Omega$, form an open conical cover of $\C TM$. By Proposition \ref{compactness_prp} and the fact that the $\distr_\lambda$ are nested, it follows that there exists $\lambda_0$ such that $\distr_{\lambda_0}\subseteq \Omega$. This means that $\norm(\alpha;\distr_{\lambda_0})\leq t$ and therefore $\norm_K(\distr_{\lambda_0})\leq t$. By the arbitrariness of $t>\norm_K(\bigcap_\lambda\distr_\lambda)$, we conclude that $\inf_{\lambda<\lambda_1}\norm_K(\distr_\lambda)\leq \norm_K(\bigcap_\lambda\distr_\lambda)$. Since the reverse inequality is trivial, the conclusion follows.
\end{proof}

\section{Regularized Diederich--Forn\ae ss index}\label{df_sec}

As anticipated in the introduction, the "reduction to the core" theorem allows to prove the exact regularity of the $\dbar$-Neumann problem on smooth bounded pseudoconvex domains $\Omega\subset \C^n$ with trivial Levi core. For this, we need two ingredients: \begin{enumerate}
	\item  Harrington's generalization of a theorem of Kohn, giving a sufficient condition for exact regularity of the $\dbar$--Neumann problem; 
	\item  a slight generalization of Adachi--Yum theorem (that is, Theorem \ref{adachi_yum_thm_intro} of the introduction), involving a "regularized" version of the classical Diederich--Fornæss index, which we introduce below.
\end{enumerate}

Let us begin with the following pseudodistance on the space of defining functions.  

\begin{dfn}[cf. \cite{Harrington_global}, Definition 6.1] Let $\Omega\subset\C^n$ be a smooth bounded pseudoconvex domain. Given $r_1,r_2\in \mathrm{Def}(\Omega)$, let $f$ be the smooth function, defined on the common domain of $r_1$ and $r_2$, such that $r_2=e^fr_1$. We define \begin{equation}\label{def_metric}
\sigma(r_1,r_2):=\sup_{p\in M} \sup_{Z_p\in\levinull_p\colon |Z_p|=1}|Z_pf|,
\end{equation}
where $|Z_p|$ denotes the ordinary Euclidean norm of $Z_p$.
\end{dfn}

The pseudodistance $\sigma$ has an easy relation with the $(h, \mathcal{D})$-size introduced above. 

\begin{prp}\label{sigma_vs_size} Let $\Omega\subset\C^n$ be a smooth bounded pseudoconvex domain. If $\alpha_j$ ($j=1,2$) is a D'Angelo form associated to $r_j\in \mathrm{Def}(b\Omega)$ as in Proposition \ref{d'angelo_prp}, vi), then $\sigma(r_1,r_2)=||\alpha_2-\alpha_1||_\levinull$, where $||\cdot||_\levinull$ is the size of Definition \ref{hDsize_dfn} (the unspecified metric is the Euclidean one). 
\end{prp}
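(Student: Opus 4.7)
The strategy is to reduce the identity to a direct application of Proposition \ref{d'angelo_prp}, part iv), after which the two quantities coincide by inspection. The only technical point is to match the $f$ that arises from the two defining functions $r_1,r_2$ with the $f$ appearing in property iv).

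First, I would verify that if $r_2=e^fr_1$ on a common neighborhood of $b\Omega$, then the associated pseudo-Hermitian structures $\theta_j:=j^*d^cr_j\in\Theta_+(M,T^{1,0}M)$ satisfy $\theta_2=e^f\theta_1$ on $M$. This is an immediate product-rule computation:
$$d^cr_2=d^c(e^fr_1)=e^f\,d^cr_1+r_1\,d^c(e^f),$$
and the second term vanishes upon restriction to $M=\{r_1=0\}$. Thus $\theta_2=e^f\theta_1$, so Proposition \ref{d'angelo_prp} part iv) applies with the same $f$ and gives
$$\alpha_2-\alpha_1=df\quad\text{on }\levinull\oplus\overline{\levinull}.$$

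Finally, I would evaluate both sides of the claimed identity on a unit vector $Z_p\in\levinull_p$. By the displayed formula, $(\alpha_2-\alpha_1)(Z_p)=Z_pf$. Taking moduli and the supremum over all $p\in M$ and all $Z_p\in\levinull_p$ of unit Euclidean length, the left-hand side is exactly $\|\alpha_2-\alpha_1\|_{\levinull}$ by Definition \ref{hDsize_dfn} (the metric $h$ being the Euclidean one), while the right-hand side is $\sigma(r_1,r_2)$ by \eqref{def_metric}. This proves the proposition.

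There is no real obstacle: all substantive work has already been done in proving property iv) of Proposition \ref{d'angelo_prp}. The one subtlety worth flagging is that $\alpha_2-\alpha_1$ is a priori only well-defined as a global one-form after a choice of $\theta_j$-normalized vector fields $T_j$; however, property iii) and iv) together ensure that the restriction of $\alpha_2-\alpha_1$ to $\levinull\oplus\overline{\levinull}$ is unambiguously $df$, which is all that enters the size $\|\cdot\|_\levinull$.
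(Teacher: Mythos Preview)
Your proof is correct and follows essentially the same approach as the paper's own argument: both compute $d^cr_2=e^f d^cr_1$ on $M$ via the product rule, invoke Proposition \ref{d'angelo_prp} iv) to get $\alpha_2-\alpha_1=df$ on $\levinull$, and then match the two suprema via Definition \ref{hDsize_dfn}. Your version is slightly more detailed (particularly in flagging the r\^ole of part iii) for well-definedness), but the substance is identical.
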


\begin{proof}
	Recall that $\alpha_j$ is not uniquely defined, but its restriction to the Levi-null distribution is unique (by Proposition \ref{d'angelo_prp}, iii)). If $r_2=e^fr_1$, then $d^cr_2=e^fd^cr_1$ on $M=b\Omega$. The conclusion follows from Proposition \ref{d'angelo_prp}, iv), and Definition \ref{hDsize_dfn}. 
	\end{proof}

The exact regularity theorem alluded to above is as follows. 

\begin{thm}[combination of \cite{Harrington_global}, Theorem 5.1 and Theorem 6.2, cf. main theorem of \cite{Kohn_quantitative}]\label{kohn_harrington_thm}
Let $\Omega\subset\C^n$ be a smooth bounded pseudoconvex domain. Assume that there exist $\delta_k>0$ ($k\in \N$) such that $\delta_k\rightarrow 1-$ and $r_k\in\mathrm{Def}(\Omega)$ such that $-(-r_k)^{\delta_k}$ is plurisubharmonic and \begin{equation}\label{harrington_condition}
\lim_{k\rightarrow+\infty}\sqrt{1-\delta_k}\sigma(r_k,r_0)=0,
\end{equation}
where $r_0$ is a fixed defining function. Then the $\dbar$-Neumann problem is exactly regular on $\Omega$.
\end{thm}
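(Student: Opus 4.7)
The plan is to deduce the statement from the framework of weighted $L^2$ estimates for the $\dbar$--Neumann problem, following Kohn \cite{Kohn_quantitative} as refined by Harrington; since the statement is explicitly presented as the combination of Theorem 5.1 and Theorem 6.2 of \cite{Harrington_global}, I will indicate how each contributes rather than reprove them from scratch.

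First, I would use the plurisubharmonic function $\varphi_k:=-(-r_k)^{\delta_k}$ as a weight in the Morrey--Kohn--Hörmander identity on $(0,1)$-forms. Because $\varphi_k$ is plurisubharmonic and $-\varphi_k$ vanishes to a controlled order at the boundary, one obtains, for each $k$, a weighted basic estimate whose boundary term involves $-(-r_k)^{\delta_k}$ and its derivatives. The key computation is that $\partial\dbar\varphi_k$ bounds from below a quantity of the form $\delta_k(1-\delta_k)(-r_k)^{\delta_k-2}|\partial r_k|^2$ in directions transverse to $\levinull$, while on the Levi-null directions it degenerates in a way encoded by the D'Angelo-type form $\alpha_k$ associated with $r_k$ through Proposition \ref{d'angelo_prp} vi).

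Next, I would translate hypothesis \eqref{harrington_condition} into a bound on the defect between the $k$-th weighted estimate and the unweighted Sobolev estimate one wants to reach. By Proposition \ref{sigma_vs_size}, $\sigma(r_k,r_0)=\|\alpha_k-\alpha_0\|_{\levinull}$, so \eqref{harrington_condition} reads $\sqrt{1-\delta_k}\,\|\alpha_k-\alpha_0\|_{\levinull}\to 0$; this is precisely the ratio that governs the error term produced when one commutes a tangential derivative of order $s$ past the weight and tries to absorb it back using the good term $\delta_k(1-\delta_k)|\partial r_k|^2/(-r_k)^{2-\delta_k}$ from the basic estimate. Theorem 6.2 of \cite{Harrington_global} encapsulates exactly this passage, producing a family of weights suitable as input for Theorem 5.1. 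Theorem 5.1 of \cite{Harrington_global} is in turn a quantitative exact-regularity statement of Kohn type, yielding $W^s$--bounds on the canonical solution operator for every $s\geq 0$ as soon as such a family of weights exists.

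The main obstacle is the balancing act present already in Kohn's original argument: making $\delta_k$ close to $1$ drives the constants coming from the plurisubharmonicity of $\varphi_k$ to degenerate like $\delta_k(1-\delta_k)$, while needing $\delta_k$ close to $1$ is what makes the weight usable at high Sobolev orders. Harrington's condition \eqref{harrington_condition} quantifies the precise trade-off and is the exact point at which the limiting argument closes. Beyond invoking the two cited theorems and checking that the hypotheses match after the identification $\sigma(r_k,r_0)=\|\alpha_k-\alpha_0\|_{\levinull}$, no further work is needed.
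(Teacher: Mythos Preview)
The paper does not prove this theorem at all: it is stated as a black-box citation from \cite{Harrington_global} (Theorems 5.1 and 6.2) and \cite{Kohn_quantitative}, with no accompanying argument. Your proposal correctly recognizes this and sketches the mechanism behind those cited results, which is more than the paper itself offers; in particular, your identification of the role of \eqref{harrington_condition} via Proposition \ref{sigma_vs_size} and the balancing of $\delta_k(1-\delta_k)$ against the commutator errors is accurate and matches the structure of Harrington's argument.
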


Notice that the condition \eqref{harrington_condition} is independent of the choice of $r_0$ and it holds in particular if the sequence $\{r_k\}$ is $\sigma$-bounded, that is, it is bounded as a subset of the pseudo-metric space $(\mathrm{Def}(\Omega), \sigma)$.

\medskip 

We can now recall the definition of Diederich--Fornæss index, and give a variant of this notion inspired by Theorem \ref{kohn_harrington_thm}. 

\begin{dfn}\label{df_dfn}	
Let $\Omega$ be a precompact smooth pseudoconvex domain in a complex manifold. A Diederich--Forn\ae ss (D--F in the sequel) exponent of $\Omega$, is any $\delta\in(0,1]$ for which there exists a defining function $r\in \mathrm{Def}(\Omega)$ with the property that $-(-r)^\delta$ is plurisubharmonic on $\Omega$ (that is, on the subset of $\Omega$ on which it is defined). We define the Diederich--Forn\ae ss index of $\Omega$, denoted by $\DF(\Omega)$, as the supremum of all D--F exponents of $\Omega$: \begin{equation*}
\DF(\Omega):=\sup\{\delta>0\colon \exists r\in \mathrm{Def}(\Omega) \text{ s.t. }-(-r)^\delta \text{ is plush. on }\Omega\}.
\end{equation*}

We say that $\Omega$ has regularized Diederich--Forn\ae ss index $1$ if there exist sequences $\delta_k\in (0,1]$ and $r_k\in\mathrm{Def}(\Omega)$ such that: \begin{enumerate}
	\item $\delta_k\rightarrow 1-$,
	\item $-(-r_k)^{\delta_k}$ is plurisubharmonic,
	\item $\{r_k\}_k$ is $\sigma$-bounded.
\end{enumerate}

\end{dfn}

We have the following immediate corollary of Theorem \ref{kohn_harrington_thm}. 

\begin{thm}\label{df1_thm}
If $\Omega\subset \C^n$ is a smooth bounded pseudoconvex domain with regularized D--F index $1$, then the $\dbar$-Neumann problem is exactly regular on $\Omega$.
\end{thm}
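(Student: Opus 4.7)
The plan is to apply Theorem \ref{kohn_harrington_thm} directly, using the definition of regularized Diederich--Forn\ae ss index $1$ to produce the required sequences and then verifying condition \eqref{harrington_condition} from $\sigma$-boundedness.

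By hypothesis, there exist sequences $\delta_k \in (0,1]$ and $r_k \in \mathrm{Def}(\Omega)$ such that $\delta_k \to 1^-$, each $-(-r_k)^{\delta_k}$ is plurisubharmonic on $\Omega$, and the family $\{r_k\}$ is $\sigma$-bounded. The first two properties are exactly the standing assumptions of Theorem \ref{kohn_harrington_thm}, so the only thing left to check is the asymptotic condition
\[
\lim_{k \to +\infty} \sqrt{1-\delta_k}\, \sigma(r_k, r_0) = 0
\]
for some (equivalently, any) fixed defining function $r_0 \in \mathrm{Def}(\Omega)$.

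Fix any such $r_0$. By the triangle-type bound for $\sigma$ that is immediate from its definition via Proposition \ref{sigma_vs_size} (the map $r \mapsto \alpha_r$ restricted to $\levinull$ is affine, so $\sigma(r_1,r_2) = \|\alpha_2 - \alpha_1\|_{\levinull}$ satisfies $\sigma(r_k, r_0) \leq \sigma(r_k, r_1) + \sigma(r_1, r_0)$ for any reference $r_1$ in the $\sigma$-bounded family), the $\sigma$-boundedness of $\{r_k\}$ implies the existence of a constant $C < +\infty$ such that $\sigma(r_k, r_0) \leq C$ for every $k$. Combined with $\delta_k \to 1^-$, this gives
\[
\sqrt{1-\delta_k}\, \sigma(r_k, r_0) \leq C\sqrt{1-\delta_k} \longrightarrow 0,
\]
verifying \eqref{harrington_condition}. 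Applying Theorem \ref{kohn_harrington_thm} then yields the exact regularity of the $\dbar$-Neumann problem on $\Omega$.

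There is essentially no obstacle here: the whole content is bookkeeping, hinging on the observation made immediately after Theorem \ref{kohn_harrington_thm} that $\sigma$-boundedness of the sequence $\{r_k\}$ is a sufficient condition for \eqref{harrington_condition}. The deeper work has already been done in Theorem \ref{kohn_harrington_thm} (Kohn--Harrington) and in the definitional setup that packages $\sigma$ as a pseudodistance compatible with the $(h, \levinull)$-size of D'Angelo forms (Proposition \ref{sigma_vs_size}).
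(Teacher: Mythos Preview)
Your proposal is correct and matches the paper's approach exactly: the paper states Theorem~\ref{df1_thm} as an ``immediate corollary of Theorem~\ref{kohn_harrington_thm}'' without further proof, relying on the remark (immediately following Theorem~\ref{kohn_harrington_thm}) that condition~\eqref{harrington_condition} is automatically satisfied when $\{r_k\}$ is $\sigma$-bounded. Your argument simply spells out this implication, and the appeal to Proposition~\ref{sigma_vs_size} for the triangle inequality is fine but not strictly necessary, since $\sigma$ is already declared a pseudodistance.
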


Finally, we state and prove our slightly improved Adachi--Yum theorem, involving the quantities $\norm_K$ of Definition \ref{adachi_yum_dfn} and the notion of "regularized D--F index $1$".

\begin{thm}\label{liu_yum_adachi_thm} Let $\Omega$ be a precompact smooth pseudoconvex domain in a complex manifold. Assume that there exists a smooth strictly plurisubharmonic function in a neighborhood of $b\Omega$. Then we have the identity
	\begin{equation*}
	\DF(\Omega)=\frac{1}{1+\norm(\levinull)}.
	\end{equation*}
Moreover, the domain $\Omega$ has regularized D--F index $1$ if and only if there exists $K<+\infty$ such that $\norm_K(\levinull)=0$.
\end{thm}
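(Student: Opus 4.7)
The first identity extends the Adachi--Yum theorem (Theorem \ref{adachi_yum_thm_intro}) to the complex-manifold setting under the stated hypothesis; we outline the argument because the regularized statement rests on quantitative information extracted from it. Its crux is the pointwise equivalence: for $r \in \mathrm{Def}(\Omega)$ with D'Angelo form $\alpha$ and $\delta \in (0,1]$, the function $-(-r)^\delta$ is plurisubharmonic in a one-sided neighborhood of $b\Omega$ if and only if
\begin{equation*}
\overline{\alpha_{0,1}} \wedge \alpha_{0,1} \leq \tfrac{1-\delta}{\delta}\, \dbar\alpha \qquad \text{on } \levinull.
\end{equation*}
The "$\Rightarrow$" direction comes from rewriting $\partial\dbar(-(-r)^\delta) \geq 0$ as $\partial\dbar r + \tfrac{1-\delta}{-r}\,\partial r \wedge \dbar r \geq 0$ on $\Omega$ and taking a boundary limit along a $(1,0)$-direction $N$ transversal to $b\Omega$, invoking part vi) of Proposition \ref{d'angelo_prp}. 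To upgrade a one-sided neighborhood statement to plurisubharmonicity on all of $\Omega$ (the key point for "$\Leftarrow$"), we use the strictly plurisubharmonic function $\varphi$ supplied by the hypothesis: substituting $\tilde r = r\, e^{-\chi \circ \varphi}$ for a convex increasing $\chi$ that vanishes in a one-sided neighborhood of $b\Omega$, one obtains a defining function with $-(-\tilde r)^\delta$ plurisubharmonic on $\Omega$ and the same D'Angelo form as $r$ on $\levinull$. Together with Proposition \ref{norm_technical_prp} (to pass between strict and non-strict inequalities), the equivalence produces $\DF(\Omega) = 1/(1+\norm(\levinull))$.

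For the regularized statement, assume first that $\Omega$ has regularized D--F index $1$ through $(r_k, \delta_k)$, and let $\alpha_k$ denote the D'Angelo form of $r_k$. The forward equivalence gives $\widetilde\norm(\alpha_k;\levinull) \leq (1-\delta_k)/\delta_k \to 0$, with $\widetilde\norm$ as in Proposition \ref{norm_technical_prp}. Proposition \ref{sigma_vs_size} converts $\sigma$-boundedness of $\{r_k\}$ into a uniform bound $||\alpha_k||_{\levinull} \leq K$ relative to a fixed reference D'Angelo form, and the final clause of Proposition \ref{norm_technical_prp} then yields $\norm_{K'}(\levinull) = 0$ for every $K' > K$. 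Conversely, if $\norm_K(\levinull) = 0$, choose D'Angelo forms $\alpha_k$ with $||\alpha_k||_{\levinull} < K$ and $\widetilde\norm(\alpha_k;\levinull) \to 0$; realize each $\alpha_k$ as the D'Angelo form of some $r_k \in \mathrm{Def}(\Omega)$ via part vi) of Proposition \ref{d'angelo_prp}, apply the converse equivalence with $\delta_k \in (0,1)$ chosen so that $\delta_k \to 1^-$ and $(1-\delta_k)/\delta_k \geq \widetilde\norm(\alpha_k;\levinull)$, and run the patching step to obtain $\tilde r_k$ with $-(-\tilde r_k)^{\delta_k}$ plurisubharmonic on all of $\Omega$ and the same D'Angelo form as $r_k$ on $\levinull$. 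Proposition \ref{sigma_vs_size} once more gives $\sigma$-boundedness of $\{\tilde r_k\}$.

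The main obstacle is the global-extension step: from plurisubharmonicity of $-(-r)^\delta$ only in a one-sided neighborhood of $b\Omega$, one must produce a defining function making it plurisubharmonic on all of $\Omega$ \emph{without} altering the D'Angelo form on $\levinull$. The strict plurisubharmonicity of $\varphi$ provides the positive curvature needed to dominate any interior negative contributions of $\partial\dbar(-(-r)^\delta)$, while arranging $\chi \circ \varphi$ to vanish in a one-sided neighborhood of $b\Omega$ preserves the intrinsic boundary invariant. It is precisely here that the hypothesis on the existence of a strictly plurisubharmonic function in a neighborhood of $b\Omega$ enters the proof.
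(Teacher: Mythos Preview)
Your handling of the regularized statement via Proposition \ref{sigma_vs_size} and Proposition \ref{norm_technical_prp} is correct and matches the paper's argument. However, the sketch of the core equivalence contains a real gap.

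The issue is your ``$\Leftarrow$'' direction. You treat the difficulty as the \emph{global extension} from a one-sided neighborhood to all of $\Omega$, but the substantive step---which your outline omits entirely---is obtaining plurisubharmonicity in any one-sided neighborhood from the pointwise boundary inequality on $\levinull$. The D'Angelo form condition is a statement about the behavior of $\partial\dbar r$ at boundary points in Levi-null directions; why should the Hermitian form $\partial\dbar r + \tfrac{1-\delta}{-r}\,\partial r\wedge\dbar r$ be nonnegative on a full interior neighborhood, in \emph{all} $(1,0)$-directions? ``Taking a boundary limit along $N$'' does not reverse. The paper handles this by (i) restricting to the span of $Z$ and $N$ and reducing via Sylvester's criterion to positivity of a $2\times 2$ determinant, (ii) applying the key identity $N\bigl(\partial\dbar r(Z,\overline Z)\bigr) = \partial\alpha(Z,\overline Z) - \tfrac12|\alpha(Z)|^2$ for $Z_p\in\levinull_p$ (Lemma \ref{liu_yum_key_lem}) to compute the normal derivative of this determinant on $b\Omega$, and (iii) invoking a quadratic-form propagation lemma (Lemma \ref{quad_form_lem}) to push positivity into $\{r<0\}$ near each boundary point. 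Step (iii) requires the \emph{strict} inequality $\overline{\alpha_{0,1}}\wedge\alpha_{0,1} < t\,\dbar\alpha$ on $\levinull$, which is why the paper works with $\norm$ rather than $\widetilde\norm$ in this half and starts from $t>\norm(\levinull)$. Your stated equivalence with a non-strict inequality and no propagation argument hides all of this.

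A secondary point: your proposed role for the strictly plurisubharmonic function $\varphi$---patching via $\tilde r = r\,e^{-\chi\circ\varphi}$---is a legitimate device, but it is not where the hypothesis does its real work in the paper. There, the existence of $\varphi$ enters through Proposition \ref{norm_technical_prp}: adding a large multiple of $d\varphi$ to any D'Angelo form makes $\dbar\alpha$ positive definite on $\levinull$, which is what guarantees finiteness of $\norm(\levinull)$ and justifies the passage between $\norm$ and $\widetilde\norm$. The interior extension, once strict plurisubharmonicity near $b\Omega$ is established, is treated as routine.
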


Putting together Theorem \ref{reduction_thm}, Theorem \ref{df1_thm}, and Theorem \ref{liu_yum_adachi_thm}, we obtain the desired corollary. 

\begin{thm}\label{exact_reg_thm}
	If $\Omega\subset \C^n$ is a smooth bounded pseudoconvex domain with trivial Levi core, then the $\dbar$-Neumann problem is exactly regular on $\Omega$.
	\end{thm}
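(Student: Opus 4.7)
The plan is to chain together three results already established in the paper: Theorem \ref{reduction_thm} (reduction to the core), Theorem \ref{liu_yum_adachi_thm} (the improved Adachi--Yum formula, characterizing regularized Diederich--Forn\ae ss index $1$ via the vanishing of some $\norm_K(\levinull)$), and Theorem \ref{df1_thm} (the Kohn--Harrington sufficient condition for exact regularity). All the substantive work has been packaged inside these three statements, so the task here is essentially bookkeeping: verify that the hypotheses line up and interpret the quantity $\norm_K$ when evaluated on the trivial distribution.

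First I would verify the ambient hypotheses needed to invoke Theorems \ref{reduction_thm} and \ref{liu_yum_adachi_thm}. Since $\Omega$ is a bounded smooth pseudoconvex domain in $\C^n$, its boundary $M = b\Omega$ is a compact pseudoconvex real hypersurface, and the function $|z|^2$ is strictly plurisubharmonic on the whole ambient space; both hypotheses are therefore satisfied. Next I would unpack the hypothesis "trivial Levi core", i.e.\ $\core(\levinull) = 0$, and observe that for the zero distribution the inequality $\overline{\alpha_{0,1}} \wedge \alpha_{0,1} < t\,\dbar \alpha$ of Definition \ref{adachi_yum_dfn} is vacuously satisfied for every $t > 0$, so $\norm(\alpha; \core(\levinull)) = 0$ for every D'Angelo form $\alpha$. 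Fixing any such $\alpha_0$, which has finite $(h,\levinull)$-size by compactness of $M$, and choosing $K > \|\alpha_0\|_{h,\levinull}$, I conclude $\norm_K(\core(\levinull)) = 0$.

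Applying Theorem \ref{reduction_thm} I would then obtain $\norm_K(\levinull) = \norm_K(\core(\levinull)) = 0$ for this same $K$, which by the "moreover" part of Theorem \ref{liu_yum_adachi_thm} means that $\Omega$ has regularized Diederich--Forn\ae ss index $1$. A final appeal to Theorem \ref{df1_thm} then yields exact regularity of the $\dbar$-Neumann problem on $\Omega$, completing the argument.

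I do not anticipate any real obstacle: the statement is by design a clean corollary of the deeper results developed earlier, and the only point deserving explicit mention is the vacuous satisfaction of the defining inequality for $\norm_K$ on the zero distribution, together with the observation that the size-truncated infimum is achieved (trivially) as soon as one exhibits a single D'Angelo form of finite size, for which compactness of $M$ suffices.
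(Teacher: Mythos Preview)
Your proposal is correct and follows exactly the approach the paper takes: the statement is presented there as an immediate corollary obtained by combining Theorem \ref{reduction_thm}, Theorem \ref{liu_yum_adachi_thm}, and Theorem \ref{df1_thm}. Your additional care in verifying the ambient hypotheses and in spelling out why $\norm_K(\core(\levinull))=0$ for the trivial distribution is welcome detail that the paper leaves implicit.
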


We are left with the proof of Theorem \ref{liu_yum_adachi_thm}, which follows closely the argument of \cite{Adachi_Yum}. We take the occasion to give a presentation of this argument that stresses the invariant nature of the result, e.g., relying on Cartan's formula and avoiding as much as possible computations w.r.t. local coordinates and frames. We exploit the following two lemmas.  

\begin{lem}[cf. \cite{Yum_invariance}, Prop. 4.6 and \cite{Liu_index_I}, Lemma 2.1]
\label{liu_yum_key_lem} Let $M$ be a real hypersurface in a complex manifold. Let $N$ be a $(1,0)$-vector field defined in a neighborhood of $M$ such that $Nr\equiv 1$. Set $\alpha:=\alpha_{\theta, T}$, where $T=\frac{1}{2i}(N-\overline{N})$. If $Z$ is a $(1,0)$ vector field such that $Z_p\in\levinull_p$ at some point $p\in M$, then the identity\begin{equation*}
	N\left(	\partial\dbar r(Z,\overline{Z})\right)=\partial \alpha (Z,\overline{Z})-\frac{|\alpha(Z)|^2}{2}
	\end{equation*} holds at $p$.
\end{lem}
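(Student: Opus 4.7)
Since the identity is pointwise at $p$, I work in a neighborhood of $p$ in $X$ and first fix a convenient extension of $Z$ off of $M$. Any initial $(1,0)$-extension can be modified by subtracting $(Zr)N$ so that $Zr \equiv 0$ identically in that neighborhood; then also $\bar{Z}r \equiv 0$, $[N,Z]r = 0$ and $[N,\bar{Z}]r = 0$. In particular $[N,Z]$ is of type $(1,0)$ and annihilates $r$, so at $p$ it lies in $T^{1,0}_pM$, while $[N,\bar{Z}]|_p$ lies in the complexified tangent space $\C T_pM$.

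The heart of the proof is Cartan's magic formula applied to the closed $(1,1)$-form $\omega := \partial\dbar r$: since $d\omega = 0$, we have $\mathcal{L}_N\omega = d(i_N\omega)$. By Proposition-Definition \ref{d'angelo_prp}(vi), the contraction $i_N\omega$ is proportional to $\alpha_{0,1}$, the $(0,1)$-part of the D'Angelo form. Evaluating $d\alpha_{0,1}$ on $(Z,\bar{Z})$ leaves only the $(1,1)$ component, and this equals $\partial\alpha(Z,\bar{Z})$, since the $(2,0)$-form $\partial\alpha_{1,0}$ and the $(0,2)$-form $\dbar\alpha_{0,1}$ both vanish on $(Z,\bar{Z})$.

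On the other hand, the tensorial Lie-derivative identity reads
\[
\mathcal{L}_N\omega(Z,\bar{Z}) = N\bigl(\omega(Z,\bar{Z})\bigr) - \omega([N,Z],\bar{Z}) - \omega(Z,[N,\bar{Z}]),
\]
and I would evaluate the two commutator terms at $p$. The first vanishes because $[N,Z]|_p \in T^{1,0}_pM$ and $Z_p$ is Levi-null, so $\omega(V,\bar{Z})_p = 0$ for every $V \in T^{1,0}_pM$. For the second, I decompose $[N,\bar{Z}]|_p$ along the splitting $\C T_pM = T^{1,0}_pM \oplus T^{0,1}_pM \oplus \C T_p$ (with $T = \tfrac{1}{2i}(N-\bar{N})$): the pure-tangential pieces contribute zero to $\omega(Z,\cdot)_p$ by nullness of $Z_p$, and the $T$-coefficient $b = \theta([N,\bar{Z}])_p$ is computed by writing $\theta = i(\partial r - \dbar r)$, using $[N,\bar{Z}]r = 0$ to combine the two summands, and invoking the identity $\partial\dbar r(N,\bar{Z})_p = \tfrac12\overline{\alpha(Z)}_p$ from Proposition-Definition \ref{d'angelo_prp}(vi). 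The upshot is $b = 2i\overline{\alpha(Z)}_p$, whence $\omega(Z,[N,\bar{Z}])_p = -\tfrac12|\alpha(Z)|^2_p$.

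The main obstacle is the bookkeeping of the factors of $\tfrac12$ implicit in the paper's form conventions (visible in $\lambda_\theta = \tfrac{i}{4}\theta([Z,\bar{W})]$), in particular the precise constant relating $i_N\omega$ to $\alpha_{0,1}$. Once those are tracked consistently through the Lie-derivative expansion, the three contributions assemble to the claimed identity $N(\partial\dbar r(Z,\bar{Z}))_p = \partial\alpha(Z,\bar{Z})_p - \tfrac{|\alpha(Z)|^2}{2}_p$.
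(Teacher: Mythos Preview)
Your approach is correct and essentially the same as the paper's. The paper expands $d(\partial\dbar r)(N,Z,\overline{Z})=0$ directly via the Cartan formula for the exterior derivative of a $2$-form, while you repackage this as Cartan's magic formula $\mathcal{L}_N\omega=d(i_N\omega)$ together with the Leibniz rule for $\mathcal{L}_N$; unwinding both gives the identical six-term identity. Your identification $d(i_N\omega)(Z,\overline{Z})=\partial\alpha(Z,\overline{Z})$ corresponds exactly to the paper's final application of Lemma~\ref{complex_cartan_lem} to the terms $Z(\omega(N,\overline{Z}))$ and $\omega([Z,\overline{Z}],N)$, and your evaluation of $\omega(Z,[N,\overline{Z}])$ via the $T$-component of $[N,\overline{Z}]$ mirrors the paper's computation of $\partial\dbar r([N,\overline{Z}],Z)$ via the coefficient $g=\partial r([N,\overline{Z}])$.
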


\begin{proof}
	Notice that $N\left(\partial\dbar r(gZ,\overline{gZ})\right)=|g|^2N\left(	\partial\dbar r(Z,\overline{Z})\right)$ for every smooth function $g$ and every $(1,0)$ vector field $Z$ such that $Z_p\in\levinull_p$. Thus, both sides of the identity are Hermitian forms on $\levinull_p$, and we may assume without loss of generality that $Z$ is a smooth $(1,0)$ vector field such that $Zr\equiv0$ on a neighborhood of $p$.
	
	The $2$-form $\partial\dbar r=d\dbar r$ is exact, thus closed. Cartan's formula yields\begin{eqnarray*}
		0&=&3d\left(\partial\dbar r\right)(N,Z,\overline{Z})\\
		&=&N\left(\partial\dbar r(Z,\overline{Z})\right)-Z\left(\partial\dbar r(N,\overline{Z})\right)+\overline{Z}\left(\partial\dbar r(N,Z)\right)\\
		&&-\partial\dbar r([N,Z], \overline{Z})+\partial\dbar r([N,\overline{Z}], Z)-\partial\dbar r([Z, \overline{Z}], N)
	\end{eqnarray*}	
	
	We now discuss each term separately. \newline
	By Proposition \ref{d'angelo_prp} and the fact that $\alpha$ is real, $Z\left(\partial\dbar r(N,\overline{Z})\right)=Z\left(\alpha(\overline{Z})\right)/2$. \newline
	Next, $\partial\dbar r(N,Z)\equiv0$ by type considerations, and thus $\overline{Z}\left(\partial\dbar r(N,Z)\right)\equiv0$. \newline
	Since both $Zr$ and $Nr$ are constant, $[N,Z]r=N(Zr)-Z(Nr)\equiv 0$. Thus, $[N,Z]\in T^{1,0}M$ and, since $Z_p\in \levinull_p$, $\partial\dbar r([N,Z], \overline{Z})$ vanishes at $p$. \newline
	The vector field $[N,\overline{Z}]$ is tangent to $M$ for the same reason as $[N,Z]$, but it has not definite type. At any rate, $[N,\overline{Z}]=g(N-\overline{N})\mod T^{1,0}M\oplus T^{0,1}M$, for some smooth function $g$. Applying $\partial r$ to this identity, we get $g=\partial r([N,\overline{Z}])$. Using again the fact that $Z_p\in \levinull_p$ and Proposition \ref{d'angelo_prp}, we get \begin{equation*}
	\partial\dbar r([N,\overline{Z}], Z)=-g\partial\dbar r(\overline{N},Z)=\frac{|\alpha(Z)|^2}{2}.
	\end{equation*}
	Finally, $\partial\dbar r([Z, \overline{Z}], N)=-\partial\dbar r(N,[Z, \overline{Z}]_{0,1})=-\alpha([Z, \overline{Z}]_{0,1})/2$.\newline
	Putting everything together, we get \begin{equation*}
	N\left(\partial\dbar r(Z,\overline{Z})\right)=\frac{Z\left(\alpha(\overline{Z})\right)}{2}-\frac{|\alpha(Z)|^2}{2}-\frac{\alpha([Z, \overline{Z}]_{0,1})}{2}.
	\end{equation*}
	To conclude, one uses Lemma \ref{complex_cartan_lem}.
\end{proof}

\begin{lem}\label{quad_form_lem}
	Let $U\subseteq\R^n$ be a neighborhood of the origin and $Q:U\times \C^N\rightarrow \R$ a $C^2$ function such that $\vec{a}\mapsto Q(x,\vec{a})$ is a quadratic form for every $x\in U$. Assume that: \begin{enumerate}
		\item $Q(x,\cdot)$ is nonnegative definite for every $x$ such that $x_n=0$;
		\item $\partial_{x_n}Q(0,\vec{a})>0$ for every nonzero $\vec{a}$ in the null space of $Q(0,\cdot)$.	
	\end{enumerate}
	Then there exists a neighborhood of the origin $V\subseteq U$ such that $Q(x,\cdot)$ is positive definite for every $x\in V\cap \{x_n>0\}$.
\end{lem}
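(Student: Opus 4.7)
The plan is to split unit vectors $\vec{a}\in\C^N$ into two regimes according to whether they lie close to the null space $K:=\ker Q(0,\cdot)$ or not: in the first regime, positivity will come from $\partial_{x_n}Q(0,\cdot)$ and will be seen as a linear gain in $x_n$, while in the second it will come directly from $Q(0,\cdot)$ and will survive by continuity. A single second-order Taylor expansion in $x_n$ at $(x',0)$ will combine the two.

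First I would fix an orthogonal decomposition $\C^N=K\oplus K^\perp$. Assumption (1) at the origin gives $Q(0,\cdot)\geq 0$, so its restriction to $K^\perp$ is positive definite and, by compactness of the unit sphere, there is $\lambda>0$ with $Q(0,\vec{c})\geq \lambda|\vec{c}|^2$ for $\vec{c}\in K^\perp$. Similarly, assumption (2) together with compactness of the unit sphere of $K$ gives $c>0$ with $\partial_{x_n}Q(0,\vec{b})\geq c|\vec{b}|^2$ for $\vec{b}\in K$. Writing any $\vec{a}=\vec{b}+\vec{c}$ with $\vec{b}\in K$, $\vec{c}\in K^\perp$, polarization of $Q(0,\cdot)$ combined with $Q(0)\vec{b}=0$ yields $Q(0,\vec{a})=Q(0,\vec{c})\geq \lambda|\vec{c}|^2$, while polarization of the Hermitian form $\partial_{x_n}Q(0,\cdot)$ combined with an AM--GM estimate on the cross term produces an inequality of the shape $\partial_{x_n}Q(0,\vec{a})\geq \tfrac{c}{2}|\vec{b}|^2-C_0|\vec{c}|^2$ for a constant $C_0$ depending only on the operator norm of $\partial_{x_n}Q(0,\cdot)$ and on $c$.

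Next I would choose $\epsilon_0>0$ with $C_0\epsilon_0^2<c/8$ and partition the unit sphere of $\C^N$ into $\{|\vec{c}|\geq \epsilon_0\}$ and $\{|\vec{c}|<\epsilon_0\}$. On the first set, $Q(0,\vec{a})\geq \lambda\epsilon_0^2$; on the second, $|\vec{b}|^2\geq 1-\epsilon_0^2\geq 1/2$, so $\partial_{x_n}Q(0,\vec{a})\geq c/8$. A second-order Taylor expansion in $x_n$, whose remainder is uniform in $\vec{a}$ over the compact unit sphere by the $C^2$-regularity of $Q$, gives
\[
Q((x',x_n),\vec{a})\geq Q((x',0),\vec{a})+x_n\,\partial_{x_n}Q((x',0),\vec{a})-C_1 x_n^2,
\]
and the first summand is nonnegative by assumption (1). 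In the first regime, continuity of $Q$ in $x$ already gives $Q(x,\vec{a})\geq \lambda\epsilon_0^2/2$ for $|x|$ small, regardless of the sign of $x_n$. In the second regime, continuity of $\partial_{x_n}Q$ in $x$ replaces $\partial_{x_n}Q((x',0),\vec{a})$ by a quantity $\geq c/16$ for $|x'|$ small, whence $Q(x,\vec{a})\geq \tfrac{c}{16}x_n-C_1 x_n^2>0$ whenever $0<x_n<c/(16C_1)$. The intersection of the two neighborhoods is the desired $V$.

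The one real difficulty is bookkeeping: all constants must be kept uniform in $\vec{a}$ as it ranges over the unit sphere, and the quadratic Taylor remainder must not eat up the linear gain of size $\sim cx_n/16$. Both points are handled by the finite dimensionality of $\C^N$ (which makes the unit sphere compact, so every continuous function is uniformly bounded there) together with the freedom to shrink $V$.
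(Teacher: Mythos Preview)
Your proof is correct and follows essentially the same strategy as the paper's: split the unit sphere into vectors close to the null space $K$ (where positivity comes from $\partial_{x_n}Q$) and vectors away from it (where positivity persists by continuity from $Q(0,\cdot)$). The only notable difference is that the paper uses the mean value theorem in $x_n$, writing $Q((x',x_n),\vec a)=Q((x',0),\vec a)+x_n\,\partial_{x_n}Q((x',x_*),\vec a)$ for some $x_*\in[0,x_n]$, which avoids your quadratic remainder $-C_1x_n^2$ altogether; your second-order Taylor approach works just as well but introduces one extra constant to control.
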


\begin{proof}
	Assume without loss of generality that the null space of $Q(0,\cdot)$ is spanned by the first $k$ canonical basis vectors of $\C^N$. The second assumption and a simple compactness plus homogeneity argument gives \begin{equation}\label{cone_estimate}
	\partial_{x_n}Q(x,\vec{a})\geq \varepsilon|\vec{a}|^2 \qquad\forall x\in V,\ \forall \vec{a}\in \mathcal{C}_\varepsilon,
	\end{equation}
	where $\varepsilon>0$, $V$ is a neighborhood of the origin, and \begin{equation*}
	\mathcal{C}_\varepsilon:=\{\vec{a}=(a_1,\ldots, a_N)\in \C^N\colon  |a_{k+1}|^2+\cdots+|a_N|^2\leq \varepsilon\left(|a_1|^2+\cdots+|a_k|^2\right)\}.
	\end{equation*}
	Writing $x=(x', x_n)$, where $x'\in \R^{n-1}$ and $x_n>0$, Lagrange Theorem yields \begin{equation*}
	Q(x,\vec{a})=Q((x',0),\vec{a})+x_n\partial_{x_n}Q((x',x_*),\vec{a})
	\end{equation*} for some $x_*\in [0,x_n]$. By the first assumption and \eqref{cone_estimate}, we get $Q(x,\vec{a})>0$ for every $x\in V\cap \{x_n>0\}$ (for a possibly smaller neighborhood of the origin $V$) and $\vec{a}\in \mathcal{C}_\varepsilon$. Since $Q(0,\vec{a})\geq c|\vec{a}|^2$ for some $c>0$ and every $\vec{a}\in \C^N\setminus \mathcal{C}_\varepsilon$, another compactness plus homogeneity argument gives $Q(x,\vec{a})\geq c|\vec{a}|^2$ for every $x$ in a neighborhood of $0$ and $\vec{a}\in \C^N\setminus \mathcal{C}_\varepsilon$. This completes the proof.
\end{proof}

\subsection{Proof of the ``$\leq$'' and of the ``only if'' parts of Theorem \ref{liu_yum_adachi_thm}}

Since \begin{equation}\label{df_computation}
\partial\dbar \left(-(-r)^\delta\right)=\delta(-r)^{\delta-1}\left(	\partial\dbar r+(1-\delta)\frac{\partial r\wedge \dbar r}{-r}\right),
\end{equation} the D--F index is the supremum of the set of $\delta$'s such that $\delta\in (0,1]$ and \begin{equation*}
\left(\partial\dbar r+(1-\delta)\frac{\partial r\wedge \dbar r}{-r}\right)(Z,\overline{Z})\geq 0\qquad\forall Z\in T^{1,0}(\Omega\cap V)
\end{equation*} for some $r\in \mathrm{Def}(\Omega)$ and a neighborhood $V$ of the boundary. Fix $\delta$ and $r$ as above.

Let $N$ be as in Lemma \ref{liu_yum_key_lem} and $Z$ be a vector field of type $(1,0)$ such that $Z_p\in \levinull_p$ and $Zr\equiv0$, defined in a neghborhood of a point $p\in b\Omega$. The matrix of the Hermitian form $\partial\dbar r+(1-\delta)\frac{\partial r\wedge \dbar r}{-r}$, restricted to the span of $Z$ and $N$, is
\begin{equation}\label{matrix_L_N}
\begin{bmatrix}
\partial\dbar r(Z,\overline{Z})& \partial\dbar r(Z,\overline{N})\\
\partial\dbar r(N,\overline{Z}) & \partial\dbar r(N,\overline{N})+\frac{(1-\delta)}{-2r}
\end{bmatrix}
\end{equation}
Computing the determinant, we get the inequality
\begin{equation*}
(-r)\left(\partial\dbar r(Z,\overline{Z})\partial\dbar r(N,\overline{N})-|\partial\dbar r(Z,\overline{N})|^2\right)+(1-\delta)\frac{\partial\dbar r(Z,\overline{Z})}{2}\geq 0,
\end{equation*}
valid inside $\Omega$, in a neighborhood of $p$. Since the expression on the left hand side vanishes in $p\in b\Omega$, its derivative along the real vector $N+\overline{N}$ must be nonpositive at $p$. We get
\begin{equation*}
2|\partial\dbar r(Z,\overline{N})|^2+(1-\delta)\frac{(N+\overline{N})\partial\dbar r(Z,\overline{Z})}{2}\leq 0\qquad \text{at }p.
\end{equation*}
Proposition \ref{d'angelo_prp} and Lemma \ref{liu_yum_key_lem} yield
\begin{equation*}
\frac{|\alpha(Z)|^2}{2}+(1-\delta)\left(\frac{\partial \alpha (Z,\overline{Z})}{2}-\frac{\dbar \alpha (Z,\overline{Z})}{2}-\frac{|\alpha(Z)|^2}{2}\right)\leq 0\qquad \text{at }p,
\end{equation*}
or, equivalently, $\delta |\alpha(Z)|^2+ (1-\delta)\left(\partial \alpha (Z,\overline{Z})-\dbar \alpha (Z,\overline{Z})\right)\leq 0$. Here $\alpha=\alpha_{\theta, T}$, with $\theta$ and $T$ as in Proposition \ref{theta_def_fct}. Since $d\alpha(Z,\overline{Z})=0$ at $p$, this inequality may be rewritten as \begin{equation*}
\left(\overline{\alpha_{0,1}}\wedge \alpha_{0,1}\right) (Z,\overline{Z})\leq (\delta^{-1}-1)\dbar\alpha(Z,\overline{Z})\qquad \text{at }p.
\end{equation*}
By the arbitrariness of $p\in M$ and $Z_p\in \levinull_p$, $\widetilde\norm(\alpha;\levinull)\leq\delta^{-1}-1$ and thus (by Proposition \ref{norm_technical_prp}) $\norm(\levinull)\leq \delta^{-1}-1$. Taking the supremum over $\delta$, we get the inequality\begin{equation*}
\DF(\Omega)\leq \frac{1}{1+\norm(\levinull)}.
\end{equation*}

Assume now that $\Omega$ has regularized D--F index equal to $1$. Then the argument above proves that \begin{equation}\label{norm_alpha_k}
\widetilde\norm(\alpha_k;\levinull)\leq\delta_k^{-1}-1,
\end{equation} where $\delta_k\rightarrow1-$ and $\alpha_k$ is a sequence of D'Angelo forms associated to defining functions $r_k$ as in Definition \ref{df_dfn}. In particular, the $\sigma$-boundedness of $\{r_k\}_k$ translates, thanks to Proposition \ref{sigma_vs_size}, into the uniform boundedness in $\levinull$-size of $\{\alpha_k\}_k$. Thus, taking the supremum in \eqref{norm_alpha_k} and recalling again Proposition \ref{norm_technical_prp}, we conclude that $\norm_K(\levinull)=0$ for $K$ large enough.

\subsection{Proof of the ``$\geq$'' and of the ``if'' parts of Theorem \ref{liu_yum_adachi_thm}}

Assume that $t>\norm(\levinull)$. This means that there exists a D'Angelo form $\alpha$ such that
\begin{equation}\label{DF_s>0}
\left(\overline{\alpha_{0,1}}\wedge \alpha_{0,1}\right)(Z_p,\overline{Z_p})<t\dbar \alpha(Z_p,\overline{Z_p}) \quad \forall p\in M,\ Z_p\in \levinull_p.
\end{equation}

By Proposition \ref{d'angelo_prp}, vi), we may assume that $\alpha(Z)=2\partial\dbar r(Z,\overline{N})$, where $r\in\mathrm{Def}(\Omega)$ and $N$ is a $(1,0)$-vector field defined on a neighborhood of $M$ such that $Nr\equiv 1$.

We claim that $-(-r)^{\frac{1}{1+t}}$ is strictly plurisubharmonic on $\Omega\cap V$ where $V$ is a neighborhood of $b\Omega$. From this the inequality $\DF(\Omega)\geq \frac{1}{1+t}$, and hence the thesis, follows immediately. By \eqref{df_computation}, the claim is equivalent to \begin{equation*}
\left(\partial\dbar r+\frac{t}{1+t}\frac{\partial r\wedge \dbar r}{-r}\right)(X,\overline{X})> 0\qquad\forall X\in T^{1,0}(\Omega\cap V).
\end{equation*}
It is clearly enough to fix $p\in M$ and prove that there is a neighborhood $V$ of $p$ such that the estimate above holds. Equivalently, we want the matrix \eqref{matrix_L_N} to be positive definite on $\Omega \cap V$ for every $(1,0)$-vector field $Z$ such that  $Zr\equiv 0$ on $V$. Notice that $ \partial\dbar r(N,\overline{N})+\frac{t}{-2(1+t)r}$ is strictly positive on $\Omega\cap V$, if $V\subseteq \{r>-\varepsilon\}$ with $\varepsilon>0$ small enough. Thus, by Sylvester's criterion, it is enough to have
\begin{equation*}
(-r)\left(\partial\dbar r(Z,\overline{Z})\partial\dbar r(N,\overline{N})-|\partial\dbar r(Z,\overline{N})|^2\right)+\frac{t}{1+t}\frac{\partial\dbar r(Z,\overline{L})}{2}>0\qquad\forall Z\colon Zr\equiv 0,
\end{equation*} on $\Omega\cap V$. Let $Z_1,\ldots, Z_{n-1}$ be a local basis of $\ker(\partial r)$. Writing $Z=\sum_{j=1}^{n-1}a_jZ_j$, the expression above induces a quadratic form $Q(q;\vec{a})$ on $\C^{n-1}$, depending smoothly on a point $q$ in a neighborhood of $p$. Notice that, since $\Omega$ is pseudoconvex, $Q(q,\cdot)$ is nonnegative definite for every $q\in M$. Moreover, as in the proof of the first half of the theorem, one sees that \begin{equation*}
(N+\overline{N})Q(\cdot\ ;\vec{a})_{|p}=\left(\frac{1}{1+t}\overline{\alpha_{0,1}}\wedge \alpha_{0,1}-\frac{t}{1+t}\dbar \alpha\right)(Z,\overline{Z}).
\end{equation*} for every $Z=\sum_{j=1}^na_jZ_j$ in the null space of the Levi form at $p$. In virtue of \eqref{DF_s>0}, we are in a position to apply Lemma \ref{quad_form_lem} (with $x_n=-r$) and conclude that $Q(q,\cdot)$ is positive definite for every $q\in \Omega$ sufficiently near $p$. This completes the proof of the identity of Theorem \ref{liu_yum_adachi_thm}.

Assume now that $\norm_K(\levinull)=0$ for some $K<+\infty$. The argument above shows that there exists a sequence $\delta_k=\frac{1}{1+t_k}$ with $t_k\rightarrow0+$, and a sequence $r_k\in\mathrm{Def}(\Omega)$ such that $-(-r_k)^{\delta_k}$ is plurisubharmonic and the $\levinull$-sizes of the associated D'Angelo forms $\alpha_k$ are uniformly bounded. Recalling Proposition \ref{sigma_vs_size}, this implies that $\Omega$ has regularized D--F index $1$.

\appendix

\section{A quantitative bound for $\norm_X$}

In this appendix, we provide a quantitative estimate of the function $\norm_X$ of Section \ref{de_rham_sec}, in the case where $X$ is a pseudoconvex domain in $\C^n$ satisfying an additional regularity property. Our estimate is a sort of interpolation inequality comparing $\norm_\Omega$ with two other, possibly more natural, norm-like functions on the first de Rham cohomology of $\Omega$. \medskip 

We use the following version of John--Nirenberg inequality, appearing in \cite{Hurri_Syrjaenen}. A domain $\Omega\subseteq\R^N$ is said to satisfy a quasi-hyperbolic boundary condition with constants $a, b>0$ if for some $x_0\in \Omega$ the inequality \[
k_\Omega(x,x_0)\leq a\log\frac{1}{d(x,b\Omega)}+b
\] holds for every $x\in \Omega$, where:\begin{itemize}
	\item $d(\cdot,b\Omega)$ is the Euclidean distance to the boundary,
	\item $k_\Omega(x,y)$ is the distance w.r.t. the Riemannian metric $d(x,b\Omega)^{-2}dx^2$. 
	\end{itemize}

\begin{thm}\label{john_nirenberg_thm}[Theorem 3.5 of \cite{Hurri_Syrjaenen}]
	Let $\Omega\subset\R^n$ satisfy a quasi-hyperbolic boundary condition with constants $a$ and $b$. There are positive constants $c_1=c_1(n,a)$ and $c_2=c_2(n,a)$ such that if $u\in L^1(\Omega,\R)$ satisfies the BMO condition \begin{equation*}
	[u]_{\mathrm{BMO}}:=\sup_{B}\frac{1}{|B|}\int_B|u-u_B|<+\infty,
	\end{equation*}
	where the supremum is over all Euclidean balls $B=B(x,r)$ such that $B(x,\frac{9}{8}r)\subseteq\Omega$, then \begin{equation*}
	\int_{\Omega}\exp\left(\frac{c_1|u-u_\Omega|}{[u]_{\mathrm{BMO}}}\right)\leq c_2|\Omega|.
	\end{equation*}
\end{thm}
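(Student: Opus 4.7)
The strategy is to combine the classical ball-level John--Nirenberg exponential integrability with a Whitney-cube chain argument that uses the quasi-hyperbolic boundary condition to transfer control from admissible balls to all of $\Omega$, in the spirit of the arguments of Smith--Stegenga, Jones, and Staples. The local ingredient is the classical statement: on any Euclidean ball $B=B(x,r)$ satisfying the admissibility condition $B(x,\tfrac{9}{8}r)\subseteq\Omega$, there exist dimensional constants $A_1,A_2>0$ with
\[
\frac{1}{|B|}\int_B\exp\Bigl(\frac{A_1\,|u-u_B|}{[u]_{\mathrm{BMO}}}\Bigr)\,dy\leq A_2.
\]
I would then fix a Whitney decomposition $\{Q_j\}_{j\in J}$ of $\Omega$; each $Q_j$ has diameter comparable to $d(Q_j,b\Omega)$, so a uniform enlargement $B_j\supseteq Q_j$ is an admissible ball, and the local inequality holds on every $B_j$ with constants independent of $j$.

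For the chain step, fix the base point $x_0\in\Omega$ from the hypothesis and let $Q_0$ be the Whitney cube containing it. For a general $Q_j$, pick $y_j\in Q_j$ and a quasi-hyperbolic geodesic from $y_j$ to $x_0$. Standard Whitney-cube geometry covers such a geodesic by a chain $Q_j=Q^{(0)},\ldots,Q^{(N_j)}=Q_0$ in which consecutive cubes meet in a set of comparable measure, with
\[
N_j\leq C\bigl(k_\Omega(y_j,x_0)+1\bigr)\leq C\bigl(a\log d(y_j,b\Omega)^{-1}+b+1\bigr).
\]
Telescoping averages through the overlaps and invoking the BMO norm on each link of the chain yields $|u_{Q_j}-u_{Q_0}|\leq C'N_j[u]_{\mathrm{BMO}}$.

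To assemble the final estimate, for $x\in Q_j$ split $|u(x)-u_{Q_0}|\leq|u(x)-u_{Q_j}|+|u_{Q_j}-u_{Q_0}|$, obtaining
\[
\int_\Omega\exp\Bigl(\frac{c_1|u-u_{Q_0}|}{[u]_{\mathrm{BMO}}}\Bigr)dx\leq\sum_j\exp\Bigl(\frac{c_1|u_{Q_j}-u_{Q_0}|}{[u]_{\mathrm{BMO}}}\Bigr)\int_{Q_j}\exp\Bigl(\frac{c_1|u-u_{Q_j}|}{[u]_{\mathrm{BMO}}}\Bigr)dx.
\]
Provided $c_1\leq A_1$ the inner integrals are controlled by $A_2|Q_j|$, while the exterior factor is at most a constant multiple of $d(Q_j,b\Omega)^{-c_1C'Ca}$. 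Choosing $c_1=c_1(n,a)$ small enough that $c_1C'Ca<n$ makes $\sum_j|Q_j|\,d(Q_j,b\Omega)^{-c_1C'Ca}\lesssim\int_\Omega d(x,b\Omega)^{-c_1C'Ca}dx$ finite (a domain satisfying a quasi-hyperbolic boundary condition is bounded), producing the claimed inequality with $u_{Q_0}$ in place of $u_\Omega$; the passage to $u_\Omega$ costs only a further multiplicative constant by Jensen.

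The main obstacle I expect lies in the geometric bookkeeping of the chain step: one must verify that a quasi-hyperbolic geodesic really is shadowed by a chain of Whitney cubes whose successive intersections are large enough to telescope cube-averages with a dimensional constant, and then track precisely how $c_1$ must shrink as $a$ grows. The delicate quantitative dependence $c_1=c_1(n,a)$ is forced by the tension between the at-worst-logarithmic blow-up of $N_j$ and the polynomial room afforded by the integral $\int_\Omega d(x,b\Omega)^{-\delta}\,dx$.
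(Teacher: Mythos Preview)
The paper does not prove this statement: it is quoted as Theorem~3.5 of \cite{Hurri_Syrjaenen} and used as a black box in the appendix. There is therefore no proof in the paper to compare your proposal against.

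For what it is worth, your outline is the standard Whitney-chain argument and is essentially how the cited reference proceeds. One point to watch if you carry it out in detail: as written, your constant $c_2$ picks up a dependence on $b$ (through the factor $\exp(c_1C'C(b+1))$ in the chain-length bound and through the bound on $\int_\Omega d(x,b\Omega)^{-\delta}\,dx$, which must be controlled by a multiple of $|\Omega|$, not merely shown finite). The stated theorem has $c_2=c_2(n,a)$ only. This is repaired by observing that both the John--Nirenberg inequality and the quasi-hyperbolic metric are scale-invariant, while under the dilation $\Omega\mapsto\lambda\Omega$ the constant $b$ shifts to $b+a\log\lambda$; choosing $\lambda$ so that $b$ becomes (say) zero reduces to a normalized situation where only $n$ and $a$ enter.
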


\begin{thm}
Let $\Omega\subseteq\C^n$ be a pseudoconvex domain and $[\alpha]\in H^1_{\mathrm{dR}}(\Omega)$ such that $\norm_\Omega([\alpha])<+\infty$ (as defined in Section \ref{de_rham_sec}). If $\Omega_1\subseteq\Omega$ is a relatively compact pseudoconvex domain satisfying the quasi-hyperbolic boundary condition with constants $a$ and $b$, 	
\begin{equation*}	
\norm_{L^1(\Omega_1)}([\alpha])^2\leq c(n,a) \norm_\Omega([\alpha])\norm_{L^\infty(\Omega_1)}([\alpha])\qquad\forall[\alpha]\in H^1_{\mathrm{dR}}(\Omega), 
\end{equation*} where \begin{equation*}
\norm_{L^1(\Omega)}([\alpha]):=\inf_{h\in \PH(\Omega)\colon [d^ch]=[\alpha]}\frac{1}{|\Omega|}\int_\Omega |h|
\end{equation*} and
\begin{equation*}
\norm_{L^\infty(\Omega)}([\alpha]):=\inf_{h\in \PH(\Omega)\colon [d^ch]=[\alpha]}||h||_{L^\infty(\Omega)}.
\end{equation*}
	\end{thm}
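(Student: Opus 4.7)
The plan is to combine a BMO estimate on a carefully chosen pluriharmonic representative with the John--Nirenberg inequality (Theorem \ref{john_nirenberg_thm}) and a trivial $L^1$--$L^\infty$ interpolation. First, since $\Omega\subset\C^n$ is Stein, Lemma \ref{hodge_theory_lem} lets me write $[\alpha]=[d^ch]$ for some pluriharmonic $h$ on $\Omega$; given $\varepsilon>0$, the definition of $\norm_\Omega$ yields a (plurisubharmonic) $f\in C^\infty(\Omega,\R)$ with $\partial(h+if)\wedge\dbar(h-if)\le(\norm_\Omega([\alpha])+\varepsilon)\,\partial\dbar f$ as Hermitian forms on $T^{1,0}\Omega$. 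Taking traces against an orthonormal basis gives the pointwise scalar inequality $|\nabla h-J\nabla f|^2\le C_n(\norm_\Omega+\varepsilon)\,\Delta f$, where $J$ is the complex structure.

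Next I would aim to prove the BMO bound
\[
[\tilde h]_{\mathrm{BMO}(\Omega_1)} \;\le\; C(n,a)\,\norm_\Omega([\alpha])
\]
for a pluriharmonic representative $\tilde h$ of $[\alpha]$ on $\Omega_1$ that realizes (up to $\varepsilon$) the infimum defining $\norm_{L^\infty(\Omega_1)}([\alpha])$. The natural approach is a Caccioppoli-type argument: against a smooth cutoff $\eta$ supported in a Euclidean ball $B\subset\Omega_1$ of radius $r$ satisfying $B(x,\tfrac{9}{8}r)\subset\Omega_1$, one integrates the pointwise inequality by parts, exploiting the identity $\int\eta^2\langle\nabla h,J\nabla f\rangle=-2\int\eta h\,\nabla\eta\cdot J\nabla f$ (a consequence of $\mathrm{div}(J\nabla f)\equiv 0$) to trade the \emph{a priori} uncontrolled quantity $\nabla f$ for the cutoff and for $h$ itself. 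Combined with Poincar\'e's inequality and the plurisubharmonicity of $|\partial h|^2$ (valid since $\partial h$ has holomorphic coefficients), this should yield $\int_B(h-h_B)^2\lesssim \norm_\Omega^2 r^{2n}$, i.e.\ the claimed BMO bound. The hard part, and the main technical obstacle, will be the transition from the global representative $h$ on $\Omega$ (for which the D'Angelo-type inequality lives) to the bounded $L^\infty$-optimal representative $\tilde h$ on $\Omega_1$: the two differ by $\Re H$ for some $H\in\OO(\Omega_1)$ whose imaginary part is \emph{a priori} uncontrolled, so the freedom to modify $f$ by $\mathrm{Im}\,H$ (which is pluriharmonic, hence preserves $\partial\dbar f$) and to add a $|H|^2$-type correction must be combined with the bound $|\Re H|\le\|h\|_{L^\infty(\overline{\Omega_1})}+\norm_{L^\infty(\Omega_1)}$ to absorb cross-terms without inflating the constant.

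Once the BMO bound is in hand, the quasi-hyperbolic boundary condition on $\Omega_1$ allows me to apply Theorem \ref{john_nirenberg_thm} to the mean-zero function $u:=\tilde h-\tilde h_{\Omega_1}$, yielding $|\Omega_1|^{-1}\int_{\Omega_1}|u|\le C(n,a)\,\norm_\Omega([\alpha])$. Since $\|u\|_{L^\infty(\Omega_1)}\le 2\|\tilde h\|_{L^\infty(\Omega_1)}\le 2\,\norm_{L^\infty(\Omega_1)}+O(\varepsilon)$, the trivial pointwise inequality $u^2\le\|u\|_{L^\infty}\,|u|$ gives
\[
\frac{1}{|\Omega_1|}\int_{\Omega_1}u^2 \;\le\; \|u\|_{L^\infty(\Omega_1)}\cdot\frac{1}{|\Omega_1|}\int_{\Omega_1}|u| \;\le\; C(n,a)\,\norm_{L^\infty(\Omega_1)}\,\norm_\Omega([\alpha]),
\]
and Cauchy--Schwarz $\bigl(|\Omega_1|^{-1}\!\int|u|\bigr)^2\le|\Omega_1|^{-1}\!\int u^2$ concludes, after noting that $u$ is itself a pluriharmonic representative of $[\alpha]$ on $\Omega_1$ (the subtracted constant belonging to $\Re\OO(\Omega_1)$). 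Letting $\varepsilon\to 0$ gives the stated inequality.
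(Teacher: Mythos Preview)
Your strategy diverges from the paper's in a way that creates a genuine gap. You aim for a BMO bound on a pluriharmonic representative $\tilde h$ with constant $C(n,a)\,\norm_\Omega([\alpha])$, but the Caccioppoli argument you sketch cannot deliver this. After your integration by parts (using $\mathrm{div}(J\nabla f)=0$ and $\int\eta^2\Delta f=-2\int\eta\nabla\eta\cdot\nabla f$) and absorption of $\int\eta^2|\nabla f|^2$, what survives is
\[
\int\eta^2|\nabla h|^2\;\lesssim\;\int h^2|\nabla\eta|^2+\lambda^2\int|\nabla\eta|^2,
\]
i.e.\ an estimate that still carries $h^2$ on the right. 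Replacing $h$ by $h-h_B$ and invoking Poincar\'e only leads back to $\int_{B'}|\nabla h|^2$ on the right (for a slightly larger ball $B'$), so the loop does not close; harmonicity of $h$ gives no self-improvement here. Your proposed fix---passing to the $L^\infty$-optimal $\tilde h=h-\Re H$ on $\Omega_1$ and modifying $f$ by $\Im H$---does not preserve the basic inequality (since $\dbar(h-if)$ picks up the nonzero term $d\overline H$), and the bound $|\Re H|\le\|h\|_{L^\infty(\overline{\Omega_1})}+\norm_{L^\infty(\Omega_1)}$ you invoke imports the uncontrolled quantity $\|h\|_{L^\infty(\overline{\Omega_1})}$. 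Note also that your claimed BMO bound, if true, would yield directly $\norm_{L^1(\Omega_1)}\le C\norm_\Omega$, which is strictly stronger than the theorem; your subsequent interpolation only weakens it back.

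The paper takes the opposite route: from the same pointwise inequality it extracts $\tfrac12|\nabla f|^2\le 2\lambda\Delta f+|\nabla h|^2$ and proves a BMO bound on the \emph{auxiliary function} $f$, with constant $C_n(\lambda+\|h\|_{L^\infty(\Omega_1)})$ (here the harmonic gradient estimate on $h$ closes the Caccioppoli step cleanly). John--Nirenberg then gives exponential integrability of $g:=f-f_{\Omega_1}$, and this is fed as a \emph{weight} into H\"ormander's $L^2$-estimate on the pseudoconvex $\Omega_1$ to solve $\dbar u=\dbar(h-ig)$. The solution yields a holomorphic $F$ with $h-\Re F=\Re u$, and the weighted $L^2$ bound on $u$ (combined with Cauchy--Schwarz against the opposite exponential weight) controls $\int_{\Omega_1}|h-\Re F|$. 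Taking infima over $h$ and $\lambda$ gives the stated inequality. The essential ingredient you are missing is H\"ormander's theorem: it is what manufactures the new pluriharmonic representative with good $L^1$ control, a step your sketch never performs.
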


The quantities $\norm_{L^1(\Omega)}([\alpha])$ and $\norm_{L^\infty(\Omega)}([\alpha])$ are well-defined thanks to Lemma \ref{hodge_theory_lem}. 

\begin{proof} Write $\norm$ in place of $\norm_\Omega$ for simplicity. Let $\norm([\alpha])<\lambda$ and $h\in \PH(\Omega)$ such that $[\alpha]=[d^ch]$, whose existence is guaranteed by Lemma \ref{hodge_theory_lem}. Then, by \eqref{norm_pluriharm}, there exists $f\in C^\infty(\Omega,\R)$ such that \begin{equation*}
 \partial (h+if)\wedge\dbar(h-if)\leq\lambda \partial \dbar f.
\end{equation*}
Tracing this inequality and using a trivial bound, we get \begin{equation*}
\frac{|\nabla f|^2}{2}\leq 2\lambda \Delta f+|\nabla h|^2,
\end{equation*}
where $\nabla$ and $\Delta$ are the Euclidean gradient and Laplacian. Multiplying by $\eta^2\leq 1$ with $\eta$ test function supported on $B(x,\frac{17}{16}r)\subset B(x,\frac{9}{8}r)\subseteq\Omega_1$, equal to $1$ on $B(x,r)$, and such that $|\nabla \eta|\leq 17r^{-1}$, we find
\begin{eqnarray*}
	\frac{1}{2}\int_\Omega\eta^2|\nabla f|^2&\leq& -4\lambda \int_\Omega\eta\nabla f\cdot \nabla\eta+\int_\Omega\eta^2|\nabla h|^2\\
	&\leq& \frac{1}{4}\int_\Omega\eta^2|\nabla f|^2+16\lambda^2 \int_\Omega|\nabla \eta|^2+\int_{B(x,\frac{17}{16}r)}|\nabla h|^2.
\end{eqnarray*}
Since $h$ is harmonic, we have the standard inequality
\begin{equation*}
\int_{B(x,\frac{17}{16}r)}|\nabla h|^2\leq C_n'r^{-2}\int_{B(x,\frac{9}{8}r)}|h|^2\leq C_nr^{n-2}||h||_{L^\infty(\Omega_1)}^2,
\end{equation*}
which gives\begin{equation*}
\int_{B(x,r)}|\nabla f|^2\leq C_nr^{2n-2}(\lambda^2+||h||_{L^\infty(\Omega_1)}^2).
\end{equation*}
Thus, by H\"older and Poincar\'e inequalities, if $B=B(x,r)$ is a Euclidean ball such that $B(x,\frac{9}{8}r)\subseteq\Omega_1$
\begin{equation*}
\frac{1}{|B|}\int_B|f-f_B|\leq C_n(\lambda+||h||_{L^\infty(\Omega_1)}).
\end{equation*}
By Theorem \ref{john_nirenberg_thm}, setting $g:=f-f_{\Omega_1}$ we have\begin{equation*}
\int_{\Omega_1}\exp\left(\frac{c_1 |g|}{\lambda+||h||_{L^\infty(\Omega_1)}}\right)\leq c_2|\Omega_1|,
\end{equation*} where $c_1, c_2>0$ depend only on $n$, $a$ and $b$. Of course, we have
\begin{equation}\label{epsilon_bound} \partial (h+ig)\wedge\dbar(h-ig)\leq\lambda \partial \dbar g.\end{equation}
The $(0,1)$-form $\dbar(h-ig)$ is clearly $\dbar$-closed. By \eqref{epsilon_bound} and H\"ormander theorem, there exists $u$ such that $\dbar u =\dbar(h-ig)$ and \begin{equation*}
\int_{\Omega_1}|u|^2\exp\left(-\frac{c_1 g}{\lambda+||h||_{L^\infty(\Omega_1)}}\right)\leq
\frac{\lambda(\lambda+||h||_{L^\infty(\Omega_1)})}{c_1} \int_{\Omega_1}\exp\left(-\frac{c_1 g}{\lambda+||h||_{L^\infty(\Omega_1)}}\right).
\end{equation*}
Thus \begin{eqnarray*}
	\left(\int_{\Omega_1}|u|\right)^2&\leq& \int_{\Omega_1}|u|^2\exp\left(-\frac{c_1 g}{\lambda+||h||_{L^\infty(\Omega_1)}}\right)\int_{\Omega_1}\exp\left(\frac{c_1 g}{\lambda+||h||_{L^\infty(\Omega_1)}}\right)\\
	&\leq&\frac{\lambda(\lambda+||h||_{L^\infty(\Omega_1)})}{c_1}\left\{\int_{\Omega_1}\exp\left(\frac{c_1 |g|}{\lambda+||h||_{L^\infty(\Omega_1)}}\right)\right\}^2\\
	&\leq&c_2^2c_1^{-1} \lambda(\lambda+||h||_{L^\infty(\Omega_1)})|\Omega_1|^2.
\end{eqnarray*} Since $u +F=h-ig$ for some holomorphic $F:\Omega_1\rightarrow\C$, we have $h-\Re(F)=\Re(u)$ and \begin{equation*}	
	\left(\int_{\Omega_1}|h-\Re(F)|\right)^2\leq c_2^2c_1^{-1} \lambda(\lambda+||h||_{L^\infty(\Omega_1)})|\Omega_1|^2
	\end{equation*}
Since $h$ is any pluriharmonic function such that $[d^ch]=[\alpha]$, the same estimate holds for $h+\Re(G)$, where $G$ is any holomorphic function on $\Omega_1$. Therefore, we get
\begin{equation*}	
\norm_{L^1(\Omega_1)}([\alpha])^2\leq c_2^2c_1^{-1} \lambda(\lambda+\norm_{L^\infty(\Omega_1)}([\alpha])).
\end{equation*}
Since $\lambda>\norm([\alpha])$ is arbitrary, we get
\begin{equation*}	
\norm_{L^1(\Omega_1)}([\alpha])^2\leq c_2^2c_1^{-1} \norm([\alpha])(\norm([\alpha])+\norm_{L^\infty(\Omega_1)}([\alpha])).
\end{equation*}
If $\norm([\alpha])\leq\norm_{L^\infty(\Omega_1)}([\alpha])$, we get the thesis. Otherwise, it follows from the trivial inequality $\norm_{L^1(\Omega_1)}([\alpha])\leq \norm_{L^\infty(\Omega_1)}([\alpha])$.
\end{proof}

\bibliography{paper}
\bibliographystyle{alpha}
\end{document}